\numberwithin{equation}{section}
\newcommand{\xRightarrow}[2][]{\ext@arrow 0359\Rightarrowfill@{#1}{#2}}
\newcommand*{\rom}[1]{\expandafter\@slowromancap\romannumeral #1@}
\theoremstyle{plain}
\newtheorem{theorem}{Theorem}[section]
\newtheorem{lemma}[theorem]{Lemma}
\newtheorem{corollary}[theorem]{Corollary}
\theoremstyle{definition}
\newtheorem{remark}[theorem]{Remark}
\newtheorem{example}[theorem]{Example}
\newcounter{aaa}[theorem]
\newenvironment{aaalist}{\begin{list}{%
\rm (\alph{aaa})   \hfill        \it}{\usecounter{aaa} %
\topsep0mm \partopsep0mm \parsep0mm \itemsep0mm %
\leftmargin2em \labelwidth2em \labelsep0em}}{\end{list}}
\newcounter{one}[theorem]
\newenvironment{onelist}{\begin{list}{%
\rm (\arabic{one}) \hfill           }{\usecounter{one} %
\topsep0mm \partopsep0mm \parsep0mm \itemsep0mm %
\leftmargin2em \labelwidth2em \labelsep0em}}{\end{list}}
\newenvironment{hylist}{\begin{list}{%
--               \hfill            }{%
\topsep0mm \partopsep0mm \parsep0mm \itemsep0mm %
\leftmargin2em \labelwidth2em \labelsep0em}}{\end{list}}
\newcounter{rom}
\newcommand{\I}{{\mathbb I}}
\newcommand{\N}{{\mathbb N}}
\newcommand{\R}{{\mathbb R}}
\newcommand{\CC}{{\mathcal C}}
\newcommand{\aaa}{{\bf a}}
\newcommand{\sss}{{\bf s}}
\newcommand{\ttt}{{\bf t}}
\newcommand{\uuu}{{\bf u}}
\newcommand{\vvv}{{\bf v}}
\newcommand{\xxx}{{\bf x}}
\newcommand{\XXX}{{\bf X}}
\newcommand{\nul}{{\bf 0}}
\newcommand*{\bigtimes}{\mathop{\raisebox{-.5ex}{\hbox{\huge{$\times$}}}}}
\begin{document}

	\title[]{How simplifying and flexible is the simplifying assumption in pair-copula constructions - analytic answers 
in dimension three and a glimpse beyond}
	\author[]{Thomas Mroz, Sebastian Fuchs, Wolfgang Trutschnig}
	\address{Department for Mathematics, University of Salzburg \\
Hellbrunnerstrasse 34, A-5020 Salzburg, Austria}
	\email{thomasmroz@a1.net \\ sebastian.fuchs@sbg.ac.at \\ wolfgang.trutschnig@sbg.ac.at}


\maketitle
\vspace*{-0.6cm}
\section*{Keywords}
Pair copula,
simplifying assumption,
conditional distribution,
Markov kernel,
dependence

\begin{abstract}
Motivated by the increasing popularity and the seemingly broad applicability of pair-copula constructions 
underlined by numerous publications in the last decade, in this contribution we tackle the unavoidable question on how 
flexible and simplifying the commonly used `simplifying assumption' is from an analytic perspective and provide 
answers to two related open questions posed by Nagler and Czado in 2016.  
Aiming at a simplest possible setup for deriving the main results we first focus on the three-dimensional setting.  
We prove that the family of simplified copulas is flexible in the sense that it is dense in the 
set of all three-dimensional co\-pulas with respect to the uniform metric $d_\infty$ - considering stronger 
notions of convergence like the one induced by the metric $D_1$, by weak conditional convergence, 
by total variation, or by Kullback-Leibler divergence, 
however, the family even turn out to be nowhere dense and hence insufficient for any kind of flexible approximation.  
Furthermore, returning to $d_\infty$ we show that the partial vine copula is never the optimal 
simplified copula approximation of a given, non-simplified copula $C$, and derive examples 
illustrating that the corresponding approximation error can be strikingly large and extend to more than 
28\% of the diameter of the metric space. Moreover, the mapping $\psi$ assigning each three-dimensional copula 
its unique partial vine copula turns out to be discontinuous with respect to $d_\infty$ 
(but continuous with respect to $D_1$ and to weak conditional convergence), 
implying a surprising sensitivity of partial vine copula approximations.    
The afore-mentioned main results concerning $d_\infty$ are then extended to the general 
multivariate setting. 
\end{abstract}


\section{Introduction}

Pair-copula constructions (most well-known in the context of vine copulas) are a very popular bottom-up approach for 
constructing high-dimensional copulas out of several bivariate ones; they have a handy graphical representation 
and can be considered as an ordered sequence of trees.
Aiming at a significant reduction of complexity it is usually assumed that the so-called \emph{simplifying assumption}, 
saying that the copulas of the conditional distribution functions do not depend on the conditioning variables, holds.  

Considering the enormous number of scientific contributions working with and applying simplified pair-copulas 
(see, e.g., \cite{biller2011, chevallier2019, dalla2016, rui2019, xiong2014, zhi2020, zhang2020})
it is quite surprising that, apart from a few critical voices (see, e.g., \cite{acar2012,derumigny2017,gijbels2017}),
no analytic and systematic study on the approximation quality and flexibility of these concepts seems to have been published so far.

After an extensive literature research it seems that the publication coming closest to such a study was written by 
\citet{spanhel2019} who focus mainly on partial vine copulas (special simplified pair-copulas whose conditional distribution functions follow a certain intuitive construction principle) and show that partial vine copulas are optimal w.r.t. Kullback-Leibler divergence if the minimization is performed sequentially, but not necessarily if the estimation is done jointly. 
As stated in \cite{spanhel2019}, this \grqq{}implies that it may not be optimal to specify the true copulas in the first 
tree" of a simplified pair-copula approximation.

Motivated by the broad applicability of pair-copula constructions, in this contribution we study flexibility and the extent of simplification imposed by the simplifying assumption from an analytic perspective. For the sake of generality of the construction we do not directly assume absolute continuity and work with densities but build the analysis on 
conditional distributions (Markov kernels). Although most results are 
established in the three-dimensional setting we also sketch possible extensions 
to the general multivariate case. We first introduce and discuss the 
somewhat more general concept of \emph{simplified copulas}, i.e., copulas satisfying the simplifying assumption but do not necessarily follow the hierarchical bottom-up approach.
We show that, on the one hand, simplified copulas are very flexible in the sense that they are dense in the family of all three-dimensional copulas with respect to the uniform metric $d_\infty$ - this flexibility, however, 
gets lost when finer topologies like the one induced by the metric $D_1$, by 
weak conditional convergence, by total variation metric or by the Kullback-Leibler divergence are considered.  
In fact, we prove that the family of simplified copulas is even \emph{nowhere dense} with respect to either of these four topologies, and, thereby provide an answer to one of the questions posed by \citet{nagler2016}.


Returning to $d_\infty$ we then show that the partial vine copula of a given, non-simplified copula $C$ is \emph{never} the best-possible simplified copula approximation of $C$ (with respect to $d_\infty$).
More importantly, the error made by approximation via partial simplified vines may be strikingly large: 
in the worst case scenario the distance between a three-dimensional copula and its assigned partial vine copula is at least $3/16$ which corresponds to $28.125 \%$ of the diameter of the metric space. 
An analogous result holds in arbitrary dimensions, in this case the worst case distance is at least $1/8$. 
With these results we answer the question on \grqq{}how far off can we be by assuming a simplified model?" also 
posed by \citet{nagler2016}.    

Sticking to the analytic perspective we moreover focus on continuity properties of the
mapping $\psi$ assigning each three-dimensional copula its unique partial vine copula and show (among other things) 
that this mapping is \emph{not} continuous with respect to $d_\infty$. In other words: if $d_\infty(A,B)$ is small then in 
general we can not infer that $d_\infty(\psi(A),\psi(B))$ is small too. As a direct consequence, although simplified 
pair-copulas are \grqq{}highly flexible" (\cite{killiches2017})
and partial vine copulas \grqq{}can yield an approximation that is superior to competing approaches" (\cite{spanhel2019}), 
approximations in terms of partial vine copulas can be of very poor quality and lead to wrong conclusions.

The rest of this paper is organized as follows: 
Section \ref{Sect.Prelim.} gathers preliminaries and notations that will be used in the sequel.
In Section \ref{Sect.Simplified} we introduce simplified copulas in dimension $d=3$,  
prove that the family of these copulas is dense in the metric space of all copulas with respect to $d_\infty$ 
(Corollary \ref{S.Dense}) and then show that with respect to either of the afore-mentioned four notions of convergence
the family is very small in the sense that it is nowhere dense.  
In Section \ref{Sect.Pair} we then focus on partial vine copulas and study the afore-mentioned mapping $\psi$ 
assigning each copula its simplified approximation.  
We discuss continuity of $\psi$ with respect to different notions of convergence (some lemmata and proofs are moved to the Supplementary to facilitate reading) and provide the afore-mentioned lower bound for the worst-case scenario 
(Sections \ref{Sect.Partial.Opt.} and \ref{Sect.Partial.Cont.}).
To avoid unnecessary complexity, in the first few sections we proceed as \cite{acar2012,hobaek2010,killiches2017,portier2018,spanhel2016} and restrict ourselves to the three-dimensional setting.
To underline generality of our findings, however, in Section \ref{Arb.Dim.} we extend some of our main results to 
the general multivariate setting and discuss the notion of so-called universally simplified copulas.  
Various examples and graphics illustrate both the obtained results and the ideas underlying the proofs.


\section{Notation and preliminaries}
\label{Sect.Prelim.}

Throughout this paper we will write $\I := [0,1]$ and let $d\geq 2$ be an integer, which will be kept fixed.
Bold symbols will be used to denote vectors, e.g., $\mathbf{x}=(x_1,\ldots,x_d) \in \mathbb{R}^d$.  
The $d$-dimensional Lebesgue measure will be denoted by $\lambda^d$, in case of $d=1$ we will also simply write $\lambda$.
We will let $\mathcal{C}^d$ denote the family of all $d$-dimensional copulas, 
$M$ will denote the comonotonicity copula, 
$\Pi$ the independence copula
and, for $d=2$, $W$ will denote the countermonotonicity copula
(we omit the index indicating the dimension since no confusion will arise).  
For every $C \in \mathcal{C}^d$ the corresponding $d$-stochastic measure will be denoted by $\mu_C$, 
i.e. $\mu_C([\nul,\uuu]) = C(\uuu)$ for all $\uuu \in \I^d$, and $\mathcal{P}_\mathcal{C}$ will denote the 
family of all $d$-stochastic measures. 
  For more background on copulas and $d$-stochastic measures we refer to \cite{durante2016,nelsen2006}. 
  For every metric space $(S,\delta)$ the Borel $\sigma$-field on $S$ will be denoted by $\mathcal{B}(S)$.

  In what follows Markov kernels will play a prominent role:
	A \emph{Markov kernel} from $\R$ to $\mathcal{B}(\mathbb{R}^{d-1})$ is a mapping 
$K: \mathbb{R}\times\mathcal{B}(\mathbb{R}^{d-1}) \rightarrow \I$ such that for every fixed 
$E\in\mathcal{B}(\mathbb{R}^{d-1})$ the mapping 
$y\mapsto K(y,E)$ is (Borel-)measurable and for every fixed $y\in\mathbb{R}$ the mapping 
$E\mapsto K(y,E)$ is a probability measure. 
\\
  Given a real-valued random variable $Y$ and a real-valued $(d-1)$-dimensional random vector ${\bf X}$ 
on a probability space $(\Omega, \mathcal{A}, \mathbb{P})$ 
we say that a Markov kernel $K$ is a \emph{regular conditional distribution} of ${\bf X}$ given $Y$ if 
\begin{align*}
	K \big( Y(\omega), E \big)
	= \mathbb{E} \big( \mathds{1}_E \circ {\bf X} \,|\, Y \big) (\omega) 
\end{align*}
holds $\mathbb{P}$-almost surely for every $E\in \mathcal{B}(\mathbb{R}^{d-1})$. 
  It is well-known that for each random vector $({\bf X},Y)$ a regular conditional distribution 
$K$ of ${\bf X}$ given $Y$ always exists 
and is unique for $\mathbb{P}^Y$-a.e. $y\in\mathbb{R}$. 
  If $({\bf X},Y)$ has distribution function $H$ 
(in which case we will also write $({\bf X},Y) \sim H$ and let $\mu_H$ denote the corresponding probability measure on $\mathcal{B}(\mathbb{R}^d)$) we will let 
$K_H$ denote (a version of) the regular conditional distribution of ${\bf X}$ given $Y$ and simply refer to it as \emph{Markov kernel of $H$}. 
  If $C \in \mathcal{C}^d$ is a copula then we will consider the Markov kernel of $C$ automatically as mapping 
$K_C: \I \times \mathcal{B}(\I^{d-1}) \rightarrow \I$.
  Defining the $v$-section of a set $G\in\mathcal{B}(\I^{d})$ as 
$G_v := \lbrace {\bf u} \in \mathbb{R}^{d-1}: ({\bf u},v) \in G \rbrace$ 
the so-called disintegration theorem yields 
\begin{align}\label{eq:di}
	\int\limits_{\I} K_C(v,G_v) \; \mathrm{d}\lambda(v) 
	= \mu_C(G)
\end{align}
so, in particular, in case of $G =\times_{i=1}^{d-1} G_i$ with $G_i = \I$ for all $i \neq j$ we have 
\begin{align*}
	\int\limits\limits_{\I} K_C(v,G) \; \mathrm{d}\lambda(v) = \lambda(G_j).
\end{align*}
 For more background 
on conditional expectation and general disintegration we refer to \cite{kallenberg1997, klenke2007}.

  We call a copula $C \in \mathcal{C}^d$ \emph{completely dependent} (w.r.t. the last coordinate) 
if there exist $\lambda$-preserving transformations 
$h_1, \dots, h_{d-1}: \I \to \I$ 
(i.e., transformations fulfilling $\lambda(h_i^{-1}(F))=\lambda(F)$ for every $F \in \mathcal{B}(\I)$) such that
$$ K(y,E)
	:= \mathds{1}_E (h_1(y), \dots, h_{d-1}(y)) $$
is a Markov kernel of $C$. 
  Since the collection of all completely dependent copulas contains all shuffles of Min, 
it is dense in $(\CC^d,d_\infty)$ (also see \cite{mikusinski2010}).
  For more properties of complete dependence we refer to \cite{lancaster1963} as well as to \cite{trutschnig2015} 
  and the references therein.

  Markov kernels can be used to define metrics stronger than the standard \emph{uniform metric} $d_\infty$, defined by
\begin{align*}
	d_\infty(C_1,C_2) := \max\limits_{\uuu \in \I^d} |C_1(\uuu) - C_2(\uuu)|
\end{align*}
on $\mathcal{C}^d$. 
  It is well known that the metric space $(\mathcal{C}^d, d_\infty)$ is compact and that pointwise and uniform convergence 
of a sequence of copulas $(C_n)_{n\in \mathbb{N}}$ are equivalent 
(see \cite{durante2016}). 
  Following \cite{trutschnig2015} and defining
\begin{eqnarray*}
	D_1(C_1,C_2) 
	& := & \int\limits_{\I^{d-1}}\int\limits_{\I} \big| K_{C_1} (v,[{\bf 0},\uuu]) - K_{C_2} (v,[{\bf 0},\uuu]) \big| 
				 \; \mathrm{d}\lambda(v) \mathrm{d}\lambda^{d-1}(\uuu) 
	\\
	D_2(C_1,C_2) 
	& := & \int\limits_{\I^{d-1}}\int\limits_{\I} \big( K_{C_1} (v,[{\bf 0},\uuu]) - K_{C_2} (v,[{\bf 0},\uuu]) \big)^2
				 \; \mathrm{d}\lambda(v) \mathrm{d}\lambda^{d-1}(\uuu) 
	\\
	D_\infty(C_1,C_2) 
	& := & \sup_{\uuu \in \I^{d-1}} \int\limits_{\I} \big| K_{C_1} (v,[{\bf 0},\uuu]) - K_{C_2} (v,[{\bf 0},\uuu]) \big|  
	       \; \mathrm{d}\lambda(v)
\end{eqnarray*}
it can be shown that $D_1,D_2$ and $D_\infty$ are metrics generating the same topology on $\mathcal{C}^d$ and that 
the family of completely dependent copulas is closed with respect to these three metrics. 
  In the sequel we will mainly work with $D_1$ and refer to \cite{trutschnig2015} for more information on $D_2$ and $D_\infty$. 
  The metric space $(\mathcal{C}^d, D_1)$ is complete and separable but not compact.

  Viewing copulas in terms of their conditional distributions and considering weak convergence gives rise to what we refer to as weak conditional convergence in the sequel: 
  Consider a sequence of copulas 
$(C_n)_{n \in \N}$ and a copula $C$ and let 
$(K_{C_n})_{n \in \N}$ and $K_C$ be (versions of) the corresponding Markov kernels. 
  We will say that $(C_n)_{n \in \mathbb{N}}$ 
converges \emph{weakly conditional} (w.r.t. the last coordinate) 
to $C$ if and only if for $\lambda$-almost every $v \in \I$ 
we have that the sequence $(K_{C_n}(v,\cdot))_{n \in \mathbb{N}}$ of probability measures on 
$\mathcal{B}(\I^{d-1})$ converges weakly to the probability measure $K_{C}(v,\cdot)$.
  In the latter case we will write $C_n \xrightarrow{\text{wcc}} C$ (where 'wcc' stands for 'weak conditional convergence'). 
\\	
  According to Lemma $5$ in \cite{trutschnig2015}
weak conditional convergence of $(C_n)_{n \in \mathbb{N}}$ to $C$ implies convergence w.r.t. $D_1$
but not vice versa (see Example \ref{D1.Not.Wcc} below), 
and convergence w.r.t. $D_1$ implies convergence in $d_\infty$
but not vice versa.

\begin{example}{} \label{D1.Not.Wcc} 
  For $d \geq 3$, $m \in \N$ and $k \in \{1,\dots,2^m\}$
define 
$J_{m,k}$ \linebreak $:=\big( (k-1)2^{-m}, k2^{-m}\big]$, set $n=2^m+k-2$ 
and consider the sequence of generalized EFGM copulas $(C_n)_{n \in \N}$ given by
$$
  C_n (\uuu,v)
	:= v \, \prod_{i=1}^{d-1} u_i + f_n(v) \, \prod_{i=1}^{d-1} u_i (1-u_i)
$$
where $f_{2^m+k-2}(v) := \int_{[0,v]} \mathds{1}_{J_{m,k}} (t) \, \mathrm{d} \lambda(t)$.
  Then, for every $n \in \N$, the identity 
$$
  K_{C_n}(v, [{\bf 0},\uuu])
	= \prod_{i=1}^{d-1} u_i + f^\prime_n(v) \, \prod_{i=1}^{d-1} u_i (1-u_i)
$$
holds for all $\uuu \in \I^{d-1}$ and almost all $v \in \I$.
Thus, the sequence $(K_{C_n}(v,\cdot))_{n \in \mathbb{N}}$ fails to converge weakly to 
$K_{\Pi}(v,\cdot)$ for $\lambda$-almost all $v \in \I$,
and it follows that $(C_n)_{n \in \N}$ does not converge weakly conditional to $\Pi$.
On the other hand, considering
\begin{eqnarray*}
  \lefteqn{\lim_{m \to \infty} \sup_{\uuu \in \I^{d-1}} 
					 \int\limits_{\I} 
					 \big| K_{C_{2^m+k-2}}(v,[{\bf 0},\uuu]) - K_{\Pi}(v,[{\bf 0},\uuu]) \big| 
					 \; \mathrm{d} \lambda(v)}
	\\
	& = & \lim_{m \to \infty} \sup_{\uuu \in \I^{d-1}} 
				\int\limits_{\I}
				\left| f^\prime_{2^m+k-2}(v) \, \prod_{i=1}^{d-1} u_i (1-u_i) \right| 
				\; \mathrm{d} \lambda(v)
	\\
	& = & \lim_{m \to \infty} \lambda (J_{m,k}) \; 
				\sup_{\uuu \in \I^{d-1}} \prod_{i=1}^{d-1} u_i (1-u_i)
	\\
	& = & 0
\end{eqnarray*}
so $ \lim_{n \to \infty} D_1 (C_n, \Pi) = 0 $.
For a counterexample in the case $d=2$ we refer to \cite{kasper2020}.
\end{example}{}

For any subset $J = \{j_{1},...,j_{|J|}\} \subseteq \{1, \dots, d\} $ with $2 \leq |J| \leq d$
such that $ j_{k} < j_{l} $ for all $ k,l \in \{1,...,|J|\} $ with $ k < l $ we let $C_J$ denote the \emph{marginal} 
copula of $C$ with respect to the coordinates in $J$. If $J$ only contains two indices $i,j$ then we will sometimes also 
write $C_{ij}$ instead of $C_{\{i,j\}}$ (no confusion will arise). 
Weak conditional convergence of a sequence of copulas transfers to marginal copulas:

\begin{theorem}{} \label{wcc.margins}
Suppose that $C,C_1,C_2,\ldots$ are $d$-dimensional copulas.
Then $C_n \xrightarrow{\text{wcc}} C$ implies
$$
  (C_n)_{J \cup \{d\}} \xrightarrow{\text{wcc}} C_{J \cup \{d\}}
$$
for every $J \subseteq \{1, \dots, d-1\} $ with $1 \leq |J| \leq d-1$.
\end{theorem}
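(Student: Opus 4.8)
The plan is to reduce the assertion to the elementary fact that weak convergence of probability measures is preserved under pushforward by a continuous map. Letting $p_J \colon \I^{d-1} \to \I^{|J|}$ denote the coordinate projection sending $(u_1,\dots,u_{d-1})$ to $(u_{j_1},\dots,u_{j_{|J|}})$, the structural heart of the argument is the identity
$$
  K_{C_{J \cup \{d\}}}(v,\cdot) = K_C\big(v, p_J^{-1}(\cdot)\big),
$$
valid for $\lambda$-almost every $v \in \I$; that is, the Markov kernel of the marginal copula $C_{J \cup \{d\}}$ is the image of the kernel $K_C(v,\cdot)$ under $p_J$. Intuitively, conditioning the full vector on the last coordinate and then projecting onto the coordinates in $J$ yields the same conditional law as conditioning the $J$-coordinates directly on the last coordinate.

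First I would establish this identity rigorously using the disintegration theorem \eqref{eq:di} together with the $\lambda$-a.e.\ uniqueness of regular conditional distributions. For Borel sets $A \subseteq \I^{|J|}$ and $B \subseteq \I$ one has $\mu_{C_{J \cup \{d\}}}(A \times B) = \mu_C\big(p_J^{-1}(A) \times B\big)$, since taking the $(J \cup \{d\})$-marginal amounts to letting the omitted coordinates range over $\I$. Applying \eqref{eq:di} to $G = p_J^{-1}(A) \times B$, whose $v$-section equals $p_J^{-1}(A)$ for $v \in B$ and is empty otherwise, gives $\mu_C(G) = \int_B K_C\big(v, p_J^{-1}(A)\big)\,\mathrm{d}\lambda(v)$. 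As $A$ and $B$ range over all Borel sets, this shows that $v \mapsto K_C\big(v, p_J^{-1}(\cdot)\big)$ satisfies the defining relation of the Markov kernel of $C_{J \cup \{d\}}$, so uniqueness yields the displayed identity outside a $\lambda$-null set (and likewise for each $C_n$).

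Second, since $p_J$ is continuous, the continuous mapping theorem for weak convergence applies pointwise in $v$: for every $v$ at which $K_{C_n}(v,\cdot) \to K_C(v,\cdot)$ weakly and for every bounded continuous $f$ on $\I^{|J|}$, the function $f \circ p_J$ is bounded continuous on $\I^{d-1}$, whence $\int f \circ p_J \,\mathrm{d} K_{C_n}(v,\cdot) \to \int f \circ p_J\,\mathrm{d} K_C(v,\cdot)$, i.e.\ $K_{C_n}\big(v, p_J^{-1}(\cdot)\big) \to K_C\big(v, p_J^{-1}(\cdot)\big)$ weakly. Collecting the $\lambda$-null exceptional set from the hypothesis of weak conditional convergence together with the countably many null sets on which the pushforward identity may fail (one for each $C_n$ and one for $C$), their union is still $\lambda$-null, and off it the identity of the first step turns this into $K_{(C_n)_{J \cup \{d\}}}(v,\cdot) \to K_{C_{J \cup \{d\}}}(v,\cdot)$ weakly, which is exactly $(C_n)_{J \cup \{d\}} \xrightarrow{\text{wcc}} C_{J \cup \{d\}}$. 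The only genuinely technical point is the first step, a routine disintegration-and-uniqueness verification; once it is in place the convergence is immediate, so I anticipate no serious obstacle.
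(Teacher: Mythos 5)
Your proposal is correct and follows essentially the same route as the paper: both establish that the Markov kernel of the marginal copula is ($\lambda$-a.e.) the pushforward of $K_C(v,\cdot)$ under the coordinate projection, via disintegration and a.e.\ uniqueness, and then invoke the continuous mapping theorem for weak convergence. Your write-up is somewhat more explicit about verifying the identity on general Borel sets and about collecting the countably many exceptional null sets, but the argument is the same.
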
{}

\begin{proof}{}
  Consider $J \subseteq \{1, \dots, d-1\} $ with $1 \leq |J| \leq d-1$ and w.l.o.g. assume that $J = \{1, \dots, |J|\}$.
  Disintegration implies that for every copula $C \in \CC^d$ there exists some Markov kernel $K_C$ such that $C$ can be expressed as  
$$
  C(\uuu,v)
	= \int\limits_{[0,v]} K_C (t, [\nul,\uuu]) \; \mathrm{d}\lambda(t)
$$
for all $(\uuu,v) \in \I^{d-1} \times \I$ and some Markov kernel $K_{C_{J \cup \{d\}}}$ such that we have 
$$
  C_{J \cup \{d\}}(\sss,v)
	= \int\limits_{[0,v]} K_{C_{J \cup \{d\}}} (t, [\nul,\sss]) \; \mathrm{d}\lambda(t)
$$
for all $(\sss,v) \in \I^{|J|} \times \I$.
Thus
\begin{eqnarray} \label{Markov.kernel.Id.}
  K_{C_{J \cup \{d\}}} (t, [\nul,\sss]) 
	& = & K_C \big( t, [\nul,\sss] \times \I^{d-1-|J|} \big)
\end{eqnarray}
holds for all $\sss \in \I^{|J|}$ and $\lambda$-almost all $t \in \I$. \\
Suppose now that $C,C_1,C_2,\ldots$ are as in the theorem.  Since projections are continuous, the 
Continuous Mapping Theorem and the previous identity imply that for $\lambda$-almost every $v \in \I$ 
weak convergence of the sequence $(K_{C_n}(v,\cdot))_{n \in \mathbb{N}}$ to $K_{C}(v,\cdot)$ 
implies 
weak convergence of the sequence $(K_{(C_n)_{J \cup \{d\}}}(v,\cdot))_{n \in \mathbb{N}}$ to $K_{C_{J \cup \{d\}}}(v,\cdot)$, which 
proves the assertion.
\end{proof}{}

We complete this section with two additional notions of convergence considered, e.g., in \citet{spanhel2019}, 
the Kullback-Leibler divergence (distance) KL and the total variation metric TV, and describe their relationship 
with $D_1$ and $d_\infty$. Defining $TV$ on $\CC^d$ by 
$$
  TV(C_1,C_2) = \sup_{G \in \mathcal{B}(\I^d)} \vert \mu_{C_1} (G) - \mu_{C_2}(G) \vert,
$$
convergence with respect to $TV$ implies convergence with respect to $D_1$:
\begin{theorem}{} \label{Thm.D1.TV}
The inequalities
$$ 
  D_1(C_1,C_2) 
	\leq D_\infty(C_1,C_2) 
	\leq 2 \, TV(C_1,C_2)
$$
hold for all copulas $C_1,C_2 \in \CC^d$. 
In particular, convergence w.r.t. $TV$ implies convergence w.r.t. $D_1$ and $D_\infty$.
\end{theorem}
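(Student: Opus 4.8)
The plan is to prove the two inequalities separately. The first is essentially immediate, while the second rests entirely on the disintegration identity \eqref{eq:di} combined with a sign-splitting argument.

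For the inequality $D_1(C_1,C_2) \leq D_\infty(C_1,C_2)$ I would simply observe that, for every fixed $\uuu \in \I^{d-1}$, the inner integral $\int_{\I} |K_{C_1}(v,[\nul,\uuu]) - K_{C_2}(v,[\nul,\uuu])| \, \mathrm{d}\lambda(v)$ is bounded above by its supremum over all $\uuu$, which by definition equals $D_\infty(C_1,C_2)$. Integrating this pointwise bound over $\I^{d-1}$ and using $\lambda^{d-1}(\I^{d-1}) = 1$ yields $D_1(C_1,C_2) \leq D_\infty(C_1,C_2)$ at once.

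For $D_\infty(C_1,C_2) \leq 2\,TV(C_1,C_2)$ I would fix $\uuu \in \I^{d-1}$ and abbreviate $g_i(v) := K_{C_i}(v,[\nul,\uuu])$ for $i=1,2$. Since $v \mapsto K_{C_i}(v,E)$ is measurable for every fixed $E$, the set $A := \{v \in \I : g_1(v) > g_2(v)\}$ is Borel, and I can split $\int_{\I} |g_1 - g_2|\,\mathrm{d}\lambda = \int_A (g_1 - g_2)\,\mathrm{d}\lambda + \int_{\I \setminus A} (g_2 - g_1)\,\mathrm{d}\lambda$. The decisive step is to recognise each half as a difference of copula measures of a genuine product set: for the Borel set $G := [\nul,\uuu] \times A \subseteq \I^d$, whose $v$-section $G_v$ equals $[\nul,\uuu]$ when $v \in A$ and is empty otherwise, the disintegration theorem \eqref{eq:di} gives $\int_A g_i\,\mathrm{d}\lambda = \mu_{C_i}(G)$, and analogously for $\I \setminus A$. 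Hence $\int_A (g_1 - g_2)\,\mathrm{d}\lambda = \mu_{C_1}(G) - \mu_{C_2}(G) \leq TV(C_1,C_2)$, and the second summand is bounded by $TV(C_1,C_2)$ in the same way. Adding the two estimates and taking the supremum over $\uuu \in \I^{d-1}$ yields $D_\infty(C_1,C_2) \leq 2\,TV(C_1,C_2)$. The final assertion is then immediate: if $TV(C_n,C) \to 0$ then both $D_\infty(C_n,C) \to 0$ and $D_1(C_n,C) \to 0$.

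I do not anticipate a substantive obstacle; the only points demanding a little care are the measurability of $A$ (which follows from measurability of the kernels in $v$) and the correct identification of the $v$-sections of $G$, so that \eqref{eq:di} applies cleanly. The one conceptual idea is to split the $v$-integral according to the sign of the kernel difference, since this is exactly what converts each piece into the measure of an honest product set to which the supremum defining $TV$ can be applied.
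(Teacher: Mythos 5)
Your proposal is correct and follows essentially the same route as the paper: the authors also split the $v$-integral according to the sign of $K_{C_1}(v,[\nul,\uuu])-K_{C_2}(v,[\nul,\uuu])$ (their set $\Lambda_{\uuu}$ is your $A$), identify each half via disintegration as a difference of copula measures of a product set, and bound each by $TV(C_1,C_2)$, while the first inequality is handled by the same trivial sup bound (cited from Lemma 3 of Fern\'andez-S\'anchez and Trutschnig, 2015).
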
{}
\begin{proof}
Fix $C_1,C_2 \in \CC^d$. 
For every $\mathbf{u} \in \I^{d-1}$ setting 
$$
  \Lambda_\mathbf{u}
	:= \{v \in \I \, : \, K_{C_1} (v,\mathbf{[0,u]}) > K_{C_2} (v,\mathbf{[0,u]})\} \in \mathcal{B}(\I)
$$
we get ($\Lambda_\mathbf{u}^c:=\I \setminus \Lambda_\mathbf{u}$) 
\begin{eqnarray*}
  0 
	& \leq & 
	\int_{\I} \vert K_{C_1}(v,\mathbf{[0,u]})- K_{C_2}(v,\mathbf{[0,u]}) \vert \; \mathrm{d} \lambda(v) 
	\\
  &   =  & \int_{\Lambda_\mathbf{u}} K_{C_1}(v,\mathbf{[0,u]})- K_{C_2}(v,\mathbf{[0,u]}) \; \mathrm{d} \lambda(v)  
	         + \int_{\Lambda_\mathbf{u}^c} K_{C_2}(v,\mathbf{[0,u]})- K_{C_1}(v,\mathbf{[0,u]}) \; \mathrm{d} \lambda(v) 
	\\
  &   =  & \mu_{C_1} (\Lambda_\mathbf{u} \times \mathbf{[0,u]}) - \mu_{C_2}(\Lambda_\mathbf{u} \times \mathbf{[0,u]}) 
	         + \mu_{C_2} (\Lambda_\mathbf{u}^c \times \mathbf{[0,u]}) - \mu_{C_1}(\Lambda_\mathbf{u}^c \times \mathbf{[0,u]}) 
	\\
  &   =  & \vert \mu_{C_1} (\Lambda_\mathbf{u} \times \mathbf{[0,u]}) - \mu_{C_2} (\Lambda_\mathbf{u} \times \mathbf{[0,u]}) \vert
           + \vert \mu_{C_1} (\Lambda_\mathbf{u}^c \times \mathbf{[0,u]}) - \mu_{C_2} (\Lambda_\mathbf{u}^c \times \mathbf{[0,u]}) \vert 
	\\
  & \leq &\, 2 TV(C_1,C_2)
\end{eqnarray*}
from which the desired inequalities follow immediately. 
The first inequality has already been proved in \cite[Lemma 3]{trutschnig2015}.
\end{proof}

It is well-known that KL divergence 
(which is not a metric and only well-defined for absolutely continuous copulas whose density is positive $\lambda^d$-almost everywhere) is stronger than TV (see the generalized Pinsker inequality in, e.h., \cite{reid2009}).
Altogether we have the following interrelation, where $a \Longrightarrow b$ indicates the convergence with respect to $a$
 implies convergence with respect to $b$ (and the first implication is restricted to those 
 copulas for which KL divergence is well-defined): 
$$
  KL
	\Longrightarrow
	TV
	\Longrightarrow
	D_1 
	\Longleftrightarrow
	D_\infty
	\Longrightarrow
	d_\infty
$$


\section{Simplified copulas} 
\label{Sect.Simplified}

In this section we introduce three-dimensional so-called simplified copulas, i.e., copulas for which the conditional 
copulas do not depend on the conditioning variable. 
The enormous importance of this type of copulas is underlined by the fact that every copula 
can be approximated arbitrarily well with respect to $d_\infty$ by simplified copulas (see Corollary \ref{S.Dense}). 	
On the other hand, we will show that simplified pair-copula constructions may fail to approximate a given 
dependence structure w.r.t. $d_\infty$ reasonably well (see Example \ref{SVC.PInd.2}).
Additionally, we will see that the afore-mentioned denseness gets lost entirely when 
finer topologies or stronger metrics are considered, and prove that for $D_1$ (Theorem \ref{thm:nowhere.dense}), 
for the total variation metric TV (Theorem \ref{Non.Dense.TV}), and the Kullback-Leibler (KL) divergence 
(Theorem \ref{Non.Dense.KL}) the family is even nowhere dense.

With very few exceptions, in literature pair-copula constructions are introduced by working with copula densities, i.e., all
copulas are assumed to be absolutely continuous. Ensuring that no key idea of the underlying concept is left out 
and aiming at a setting as general as possible we deviate from this approach and work with Markov kernels instead.

In this and the subsequent three sections all conditioning will be done with respect to the last coordinate, notice that this does not impose any restriction
(as can be seen from 
Theorem \ref{Non.Dense.TV},
Theorem \ref{Non.Dense.KL},
Remark \ref{rem.cond.} 
and Section \ref{Arb.Dim.}).

  According to disintegration for every copula $C \in \CC^3$ there exists some Markov kernel 
  $K_C$ such that $C$ can be expressed as  
$$
  C(\uuu,v)
	= \int\limits_{[0,v]} K_C (t, [\nul,\uuu]) \;\mathrm{d}\lambda(t)
$$
for all $(\uuu,v) \in \I^2 \times \I$.
Since $K_C$ is a Markov kernel, for every $\uuu \in \I^2$ the mapping $ t \mapsto K_C (t, [\nul,\uuu]) $ is 
measurable and for almost every $t \in \I$ the mapping $ \uuu \mapsto K_C (t, [\nul,\uuu]) $ is a bivariate distribution function with (\emph{conditional}) univariate marginal distribution functions 
$F_{1|3}(\cdot|t)$ and $F_{2|3}(\cdot|t)$ (conditional on $t$).
Sklar's Theorem implies that for almost every $t \in \I$ there exists some (\emph{conditional}) 
bivariate copula $C_{12;3}^t$ (conditional on $t$) satisfying
$$
  K_C (t, [\nul,\uuu])
	= C_{12;3}^t \big( F_{1|3} (u_1|t), F_{2|3} (u_2|t) \big) 
$$
for all $\uuu \in \I^2$ such that the identity
\begin{equation} \label{Eq.Cop.1}
  C(\uuu,v)
	= \int\limits_{[0,v]} C_{12;3}^t \big( F_{1|3}(u_1|t), F_{2|3}(u_2|t) \big) \;\mathrm{d}\lambda(t)
\end{equation}
holds for all $(\uuu,v) \in \I^2 \times \I$.

\begin{remark}{} \leavevmode
\begin{onelist}
\item
Since the (conditional) univariate marginal distribution functions satisfy 
$F_{1|3}(1|t)=1=F_{2|3}(1|t)$ for every $t \in \mathbb{I}$ the bivariate marginal copulas $C_{13}$ and $C_{23}$ of $C$
satisfy
$$
  C_{13} (u_1,v)
	= \int\limits_{[0,v]} C_{12;3}^t \big( F_{1|3}(u_1|t), F_{2|3}(1|t) \big) \;\mathrm{d}\lambda(t)
	= \int\limits_{[0,v]} F_{1|3}(u_1|t) \;\mathrm{d}\lambda(t)
$$
as well as $C_{23} (u_2,v) = \int_{[0,v]} F_{2|3}(u_2|t) \,\mathrm{d}\lambda(t)$
for all $(\uuu,v) \in \I^2 \times \I$ and their corresponding Markov kernels fulfill 
\begin{eqnarray} \label{Eq.Cop.Margins.1}
  K_{C} (t, [0,u_1] \times \I)
	& = K_{C_{13}} (t, [0,u_1])
	& = F_{1|3}(u_1|t)
	\\
	K_{C} (t, \I \times [0,u_2])
	& = K_{C_{23}} (t, [0,u_2])
	& = F_{2|3}(u_2|t)
\end{eqnarray}
for all $\uuu \in \I^2$ and $\lambda$-almost all $t \in \I$ (compare with Equation (\ref{Markov.kernel.Id.})).
\item 
 Notice that we choose this different notation for the (conditional) univariate distribution functions on purpose
 since this facilitates comprehending what follows.    

\item 
  For the copulas corresponding to the conditional bivariate distribution functions $K_C (t, .)$ we write 
  $C_{12;3}^t$ instead of $C_{12|3}^t$ and hence adopt the notation used in the literature (see, e.g., \cite{spanhel2019}).
\end{onelist}
\end{remark}{} 

\noindent The following two observations concerning Equation (\ref{Eq.Cop.1}) are key:
\begin{onelist}
\item[(O1) ] 
the (conditional) bivariate copulas $C_{12;3}^t$ may depend on $t$;

\item[(O2) ]
since the (conditional) univariate marginal distribution functions $F_{1|3}(.|t)$ and $F_{2|3}(.|t)$ 
may fail to be continuous the (conditional) bivariate copulas $C_{12;3}^t$ are not unique in general.
\end{onelist} 
To the best of the authors' knowledge, the second observation has not yet been addressed in the literature which is somehow not surprising considering the fact that pair-copula constructions are usually focused on absolutely continuous copulas. 

In the sequel we will study copulas $C$ for which (O1) is not true, i.e., copulas for which the (conditional) copulas 
$C_{12;3}^t$ do not depend on $t$.
We will refer to a copula $C \in \CC^3$ as \emph{generalized simplified} (with respect to the third coordinate) 
if there exists some bivariate copula $A \in \CC^2$ such that the identity
\begin{equation} \label{Eq.GS.1}
  C(\uuu,v) 
	= \int\limits_{[0,v]} A \, \big( F_{1|3}(u_1|t), F_{2|3}(u_2|t) \big) \;\mathrm{d}\lambda(t)
\end{equation}
holds for all $(\uuu,v) \in \I^2 \times \I$. In the sequel $\CC^3_{\rm GS}$ will denote the family 
of all three-dimensional generalized simplified copulas.

The following first results (Theorem \ref{ComplDep.Subset.GS} and Corollary \ref{GS.Dense}) imply that the family of generalized simplified copulas is very flexible.

\begin{theorem}{} \label{ComplDep.Subset.GS}
  Every completely dependent three-dimensional copula is genera\-lized simplified.
\end{theorem}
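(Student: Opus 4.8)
The plan is to combine disintegration with the explicit form of the Markov kernel of a completely dependent copula and to exhibit the independence copula $\Pi$ as the bivariate copula $A$ required in representation (\ref{Eq.GS.1}). First I would fix a completely dependent copula $C \in \CC^3$ together with $\lambda$-preserving transformations $h_1, h_2 \colon \I \to \I$ such that
$$
  K_C(t, [\nul, \uuu])
	= \mathds{1}_{[\nul, \uuu]}\big(h_1(t), h_2(t)\big)
	= \mathds{1}_{\{h_1(t) \le u_1\}} \cdot \mathds{1}_{\{h_2(t) \le u_2\}}
$$
is a Markov kernel of $C$; the product form holds because $(h_1(t), h_2(t)) \in [0,u_1] \times [0,u_2]$ precisely when $h_1(t) \le u_1$ and $h_2(t) \le u_2$.

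The decisive step is to read off the conditional univariate marginal distribution functions. Using (\ref{Eq.Cop.Margins.1}) I obtain
$$
  F_{1|3}(u_1|t) = K_C(t, [0,u_1] \times \I) = \mathds{1}_{\{h_1(t) \le u_1\}},
	\qquad
	F_{2|3}(u_2|t) = K_C(t, \I \times [0,u_2]) = \mathds{1}_{\{h_2(t) \le u_2\}},
$$
so that both conditional marginals are $\{0,1\}$-valued and the kernel factors \emph{exactly} as their product:
$$
  K_C(t, [\nul, \uuu])
	= F_{1|3}(u_1|t) \cdot F_{2|3}(u_2|t)
	= \Pi\big(F_{1|3}(u_1|t), F_{2|3}(u_2|t)\big),
$$
where the last equality merely records $\Pi(a,b) = ab$. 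Substituting this into the disintegration formula $C(\uuu,v) = \int_{[0,v]} K_C(t, [\nul,\uuu]) \, \mathrm{d}\lambda(t)$ gives, for all $(\uuu,v) \in \I^2 \times \I$,
$$
  C(\uuu, v)
	= \int\limits_{[0,v]} \Pi\big(F_{1|3}(u_1|t), F_{2|3}(u_2|t)\big) \; \mathrm{d}\lambda(t),
$$
which is precisely (\ref{Eq.GS.1}) with $A = \Pi$, so $C \in \CC^3_{\rm GS}$.

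I do not expect any serious obstacle: the argument is essentially computational, the only point needing care being the verification that the product of the two indicator-valued conditional marginals reproduces the full bivariate kernel $K_C(t, [\nul, \uuu])$, which is immediate from the definition of complete dependence. I would additionally remark that the choice $A = \Pi$ is by no means forced: since every bivariate copula agrees with $\Pi$ on the corners $\{0,1\}^2$ and the arguments $F_{i|3}(u_i|t)$ take only the values $0$ and $1$, \emph{any} $A \in \CC^2$ reproduces $K_C$ in (\ref{Eq.GS.1}). This non-uniqueness is exactly the phenomenon flagged in observation (O2), arising here because the conditional marginals fail to be continuous.
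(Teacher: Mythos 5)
Your proposal is correct and follows essentially the same route as the paper: both compute the Markov kernel of a completely dependent copula explicitly, read off the indicator-valued conditional marginals, and observe that the kernel factors through them. The paper states directly that \emph{every} $A \in \CC^2$ works (since the arguments only take values in $\{0,1\}$, where all copulas agree), which is exactly the content of your closing remark, so your choice of $A=\Pi$ plus that remark recovers the paper's argument in full.
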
{}

\begin{proof}{}
  Let $C \in \CC^3$ be a completely dependent copula, i.e., assume that there exist  
  $\lambda$--preserving functions $h_1,h_2: \I \to \I$ such that
$K_C (v, E):= 1_{E} (h_1(v), h_2(v))$ is a Markov kernel of $C$. 
Considering  
$$
  K_C (v, [\nul,\uuu])
	= 1_{[0,u_1] \times [0,u_2]} (h_1(v), h_2(v))
  = 1_{[h_1(v),1]} (u_1) \, 1_{[h_2(v),1]} (u_2)
$$
as well as
$F_{1|3}(u_1|v) = 1_{[h_1(v),1]} (u_1), F_{2|3}(u_2|v) = 1_{[h_2(v),1]} (u_2) \in \{0,1\}$ it follows that
for every copula $A \in \CC^2$ the identity
$$
  C(\uuu,v)
	= \int\limits_{[0,v]} A \big( F_{1|3}(u_1|t), F_{2|3}(u_2|t) \big) \;\mathrm{d}\lambda(t)
$$
holds for all $(\uuu,v) \in \I^2 \times \I$. This yields $C \in \CC^3_{\rm GS}$. 
\end{proof}{} 

Note that completely dependent copulas are generalized simplified in the broadest sense since  
Equation (\ref{Eq.GS.1}) does not only hold for one or some copulas, it holds for every $A \in \CC^2$.

Since the collection of all completely dependent copulas is dense in $(\CC^3, d_{\infty})$ 
Theorem \ref{ComplDep.Subset.GS} has the following consequence:

\begin{corollary}{} \label{GS.Dense}
  The collection of all generalized simplified copulas is dense in $(\CC^3, d_{\infty})$.
\end{corollary}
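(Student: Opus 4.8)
The plan is to obtain the corollary as an immediate consequence of Theorem \ref{ComplDep.Subset.GS} together with the density of the completely dependent copulas already recorded in Section \ref{Sect.Prelim.}. The underlying topological principle is simply that any subset of a metric space containing a dense subset is itself dense, so essentially no new analytic work is required: the whole substance lies in the two ingredients being combined, both of which are available at this point.

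In detail, first I would invoke Theorem \ref{ComplDep.Subset.GS}, which shows that every completely dependent three-dimensional copula belongs to $\CC^3_{\rm GS}$; hence the family of all completely dependent copulas is contained in $\CC^3_{\rm GS}$. Next I would recall from the preliminaries that, since the completely dependent copulas contain all shuffles of Min, they form a $d_\infty$-dense subset of $(\CC^3, d_\infty)$ (see \cite{mikusinski2010}). Combining these two facts, $\CC^3_{\rm GS}$ contains a $d_\infty$-dense subset and is therefore itself $d_\infty$-dense, which is the assertion.

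For concreteness one can phrase the approximation directly: given an arbitrary target copula $C \in \CC^3$ and $\varepsilon > 0$, one chooses a shuffle of Min (equivalently, a completely dependent copula) $D$ with $d_\infty(C,D) < \varepsilon$, and then observes that $D \in \CC^3_{\rm GS}$ by Theorem \ref{ComplDep.Subset.GS}. This exhibits the required generalized simplified approximant and makes the density statement fully explicit.

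I do not expect any genuine obstacle here, as all the difficulty has been front-loaded into Theorem \ref{ComplDep.Subset.GS} and into the already-known density of the completely dependent copulas. The only point deserving a moment's care is bookkeeping: the density statement for completely dependent copulas is formulated for general dimension $d$ in the preliminaries, so one should simply note that it applies in the particular case $d=3$, which is exactly the setting of the corollary.
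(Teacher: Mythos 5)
Your proposal is correct and coincides with the paper's own argument: the corollary is stated there as an immediate consequence of Theorem \ref{ComplDep.Subset.GS} combined with the $d_\infty$-density of the completely dependent copulas (via shuffles of Min) recorded in the preliminaries. Nothing further is needed.
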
{} 

Returning to observation (O2) in what follows we will mainly restrict ourselves to the family of 
copulas $C \in \CC^3$ for which almost all (conditional) univariate marginal distribution functions 
$F_{1|3}(.|t)$ and $F_{2|3}(.|t)$ are continuous and let $\CC^3_{\rm c}$ denote the family of all these copulas.
According to Sklar's theorem, for every copula $C \in \CC^3_{\rm c}$ the (conditional) bivariate copulas 
$C_{12;3}^t$ are unique for almost all $t \in \I$. Obviously the family of all absolutely continuous copulas 
$\CC^3_{\rm ac}$ is a subset of $\CC^3_{\rm c}$, so for absolutely continuous copulas the conditional copulas are unique. 

We will let $\CC^3_{\rm S} := \CC^3_{\rm GS} \cap \CC^3_{\rm c}$
denote the collection of all \emph{simplified} copulas, i.e., the class of all three-dimensional copulas $C$ 
which are generalized simplified \emph{and} have continuous (conditional) univariate marginal distribution functions 
$F_{1|3}(.|t)$ and $F_{2|3}(.|t)$. In this case the copula $A \in \CC^2$ in Equation \ref{Eq.GS.1} is unique 
and equals $C_{12;3}^t$ for almost all $t \in \I$.
\\
Before proceeding we illustrate the above \emph{simplifying assumption} in terms of 
the (Fr{\'e}chet) class of all three-dimensional copulas $C$ fulfilling that coordinates $1 \& 3$ as well as 
$2 \& 3$ are independent:

\begin{example}{} \label{SVC.PInd}
(Class $\mathcal{F}^3_\Pi$ of three-dimensional copulas $C$ satisfying $C_{13} = \Pi = C_{23}$) 

\noindent For $C \in \mathcal{F}^3_\Pi$ we have  
$F_{1|3}(u_1|t)=u_1$ and $F_{2|3}(u_2|t)=u_2$ for all $\uuu \in \I^2$ and almost all $t \in \I$ implying 
$\mathcal{F}^3_\Pi \subseteq \mathcal{C}^3_c$. 
If $D \in \mathcal{F}^3_\Pi$ is simplified then there exists some unique bivariate copula $A \in \CC^2$ such that 
$$
  D (\uuu,v) 
	= \int\limits_{[0,v]} A \, \big( F_{1|3}(u_1|t), F_{2|3}(u_2|t) \big) \;\mathrm{d}\lambda(t)
	= \int\limits_{[0,v]} A (u_1, u_2) \;\mathrm{d}\lambda(t)
	= A (\uuu) \, v
$$
holds for all $(\uuu,v) \in \I^2 \times \I$. 

\begin{onelist}
\item 
  The independence copula $\Pi \in \mathcal{F}^3_\Pi$ satisfies
$$
  \Pi (\uuu,v) 
	= \int\limits_{[0,v]} \Pi \, \big( F_{1|3}(u_1|t), F_{2|3}(u_2|t) \big) \;\mathrm{d}\lambda(t)
$$
for all $(\uuu,v) \in \I^2 \times \I$.
Thus, $\Pi$ is simplified, obviously $\Pi (\uuu,v)= \Pi(\uuu) \, v$ holds for all $(\uuu,v) \in \I^2 \times \I$.

\item
  The EFGM copula $C^{\rm EFGM} \in \mathcal{F}^3_\Pi$, given by 
$$
  C^{\rm EFGM}(\uuu,v)
	:= \Pi(\uuu,v) + u_1 (1-u_1) \, u_2 (1-u_2) \, v (1-v) 
$$
satisfies 
$$
  C^{\rm EFGM}(\uuu,v)
	= \int\limits_{[0,v]} (C^{\rm EFGM})_{12;3}^t \, \big( F_{1|3}(u_1|t), F_{2|3}(u_2|t) \big) \;\mathrm{d}\lambda(t)
$$
for all $(\uuu,v) \in \I^2 \times \I$,
where 
$$(C^{\rm EFGM})_{12;3}^t(\uuu) = u_1 u_2 + (1-2\,t) \, u_1 (1-u_1) \, u_2 (1-u_2)$$
for all $\uuu \in \I^2$ and almost all $t \in \I$.
Thus, $C^{\rm EFGM}$ is non-simplified.

\item
  The copula $C^{\rm Cube} \in \mathcal{F}^3_\Pi$ which distributes mass uniformly within the four cubes
\begin{eqnarray*}
  \big(0,\tfrac{1}{2}\big) \times \big(0,\tfrac{1}{2}\big) \times \big(0,\tfrac{1}{2}\big)
	& \quad & \big(\tfrac{1}{2},1\big) \times \big(\tfrac{1}{2},1\big) \times \big(0,\tfrac{1}{2}\big)
	\\
	\big(0,\tfrac{1}{2}\big) \times \big(\tfrac{1}{2},1\big) \times \big(\tfrac{1}{2},1\big)
	& \quad & \big(\tfrac{1}{2},1\big) \times \big(0,\tfrac{1}{2}\big) \times \big(\tfrac{1}{2},1\big)
\end{eqnarray*}
and has no mass outside these cubes satisfies 
$$
  C^{\rm Cube}(\uuu,v)
	= \int\limits_{[0,v]} (C^{\rm Cube})_{12;3}^t \, \big( F_{1|3}(u_1|t), F_{2|3}(u_2|t) \big) \;\mathrm{d}\lambda(t)
$$
for all $(\uuu,v) \in \I^2 \times \I$,
where 
$(C^{\rm Cube})_{12;3}^t = A^1$ for almost all $t \in \big(0,\tfrac{1}{2}\big)$ and 
$(C^{\rm Cube})_{12;3}^t = A^2$ for almost all $t \in \big(\tfrac{1}{2},1\big)$, 
and the copulas $A^1$ and $A^2$ are checkerboard copulas (see \cite{durante2016} for a general definition)
whose density is depicted in Figure \ref{fig.cube.}.
As a direct consequence $C^{\rm Cube}$ is non-simplified. 
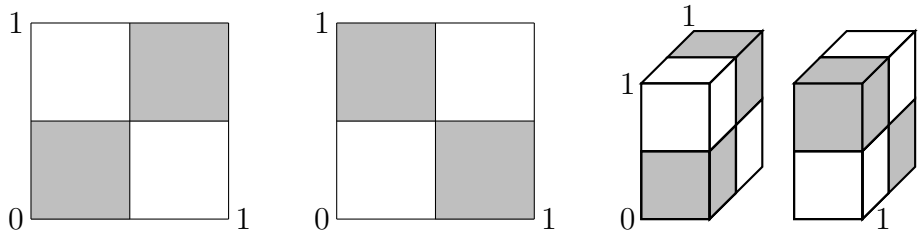
\begin{figure}[h]
\centering
\begin{tikzpicture}[xscale=0.65,yscale=0.65] 
\node (AA) at (-0.3,4.) {$1$};
\node (BA) at (-0.3,0.) {$0$};
\node (CA) at (4.3,0.) {$1$};
\fill [lightgray] (0,0) -- (0,2) -- (2,2) -- (2,0) -- (0,0);            
\fill [lightgray] (2,2) -- (2,4) -- (4,4) -- (4,2) -- (2,2);    
\draw [thin]  (0,0) -- (0,4);     
\draw [thin]  (2,0) -- (2,4);     
\draw [thin]  (4,0) -- (4,4);    
\draw [thin]  (0,0) -- (4,0);     
\draw [thin]  (0,2) -- (4,2);     
\draw [thin]  (0,4) -- (4,4);    
\end{tikzpicture}
\quad
\begin{tikzpicture}[xscale=0.65,yscale=0.65] 
\node (AB) at (9.7,4.) {$1$};
\node (BB) at (9.7,0.) {$0$};
\node (CB) at (14.3,0.) {$1$};          
\fill [lightgray] (10,2) -- (10,4) -- (12,4) -- (12,2) -- (10,2);            
\fill [lightgray] (12,0) -- (12,2) -- (14,2) -- (14,0) -- (12,0);  
\draw [thin]  (10,0) -- (10,4); 
\draw [thin]  (12,0) -- (12,4); 
\draw [thin]  (14,0) -- (14,4); 
\draw [thin]  (10,0) -- (14,0); 
\draw [thin]  (10,2) -- (14,2); 
\draw [thin]  (10,4) -- (14,4);            
\end{tikzpicture}
\quad
\begin{tikzpicture}[xscale=0.9,yscale=0.9] 
\node (A) at (-0.3,0.) {$1$};
\node (B) at (-1.2,-1.) {$1$};
\node (C) at (-1.2,-3.) {$0$};
\pgfmathsetmacro{\cube}{1}
\draw[thick,fill=lightgray] (0,-2,0) -- ++(-\cube,0,0) -- ++(0,-\cube,0) -- ++(\cube,0,0) -- cycle;
\draw[thick,fill=lightgray] (0,-2,0) -- ++(0,0,-\cube) -- ++(0,0-\cube,0) -- ++(0,0,0+\cube) -- cycle;
\draw[thick] (0,-1,0) -- ++(-\cube,0,0) -- ++(0,-\cube,0) -- ++(\cube,0,0) -- cycle;
\draw[thick] (0,-2,0) -- ++(-\cube,0,0) -- ++(0,-\cube,0) -- ++(\cube,0,0) -- cycle;
\draw[thick] (0,-1,0) -- ++(0,0,-\cube) -- ++(0,0-\cube,0) -- ++(0,0,0+\cube) -- cycle;
\draw[thick,fill=lightgray] (0,-1,-1) -- ++(0,0,-\cube) -- ++(0,0-\cube,0) -- ++(0,0,0+\cube) -- cycle;
\draw[thick] (0,-2,0) -- ++(0,0,-\cube) -- ++(0,0-\cube,0) -- ++(0,0,0+\cube) -- cycle;
\draw[thick] (0,-2,-1) -- ++(0,0,-\cube) -- ++(0,0-\cube,0) -- ++(0,0,0+\cube) -- cycle;
\draw[thick] (0,-1,0) -- ++(-\cube,0,0) -- ++(0,0,-\cube) -- ++(\cube,0,0) -- cycle;
\draw[thick,fill=lightgray] (0,-1,-1) -- ++(-\cube,0,0) -- ++(0,0,-\cube) -- ++(\cube,0,0) -- cycle;
\end{tikzpicture}
\;
\begin{tikzpicture}[xscale=0.9,yscale=0.9] 
\node (D) at (0.3,-3.) {$1$};
\pgfmathsetmacro{\cube}{1}
\draw[thick] (0,-2,0) -- ++(-\cube,0,0) -- ++(0,-\cube,0) -- ++(\cube,0,0) -- cycle;
\draw[thick] (0,-2,0) -- ++(0,0,-\cube) -- ++(0,0-\cube,0) -- ++(0,0,0+\cube) -- cycle;
\draw[thick,fill=lightgray] (0,-1,0) -- ++(-\cube,0,0) -- ++(0,-\cube,0) -- ++(\cube,0,0) -- cycle;
\draw[thick] (0,-2,0) -- ++(-\cube,0,0) -- ++(0,-\cube,0) -- ++(\cube,0,0) -- cycle;
\draw[thick,fill=lightgray] (0,-1,0) -- ++(0,0,-\cube) -- ++(0,0-\cube,0) -- ++(0,0,0+\cube) -- cycle;
\draw[thick] (0,-1,-1) -- ++(0,0,-\cube) -- ++(0,0-\cube,0) -- ++(0,0,0+\cube) -- cycle;
\draw[thick] (0,-2,0) -- ++(0,0,-\cube) -- ++(0,0-\cube,0) -- ++(0,0,0+\cube) -- cycle;
\draw[thick,fill=lightgray] (0,-2,-1) -- ++(0,0,-\cube) -- ++(0,0-\cube,0) -- ++(0,0,0+\cube) -- cycle;
\draw[thick,fill=lightgray] (0,-1,0) -- ++(-\cube,0,0) -- ++(0,0,-\cube) -- ++(\cube,0,0) -- cycle;
\draw[thick] (0,-1,-1) -- ++(-\cube,0,0) -- ++(0,0,-\cube) -- ++(\cube,0,0) -- cycle;
\end{tikzpicture}
\caption{Mass distribution of the copulas $A^1$, $A^2$ and $C^{\rm Cube}$ from Example \ref{SVC.PInd}.}
\label{fig.cube.}
\vspace{-0.3cm}
\end{figure}
\end{onelist}
\end{example}{}

In contrast to the afore-mentioned class, some copula families only contain simplified copulas:\pagebreak

\begin{example}{} \cite{hobaek2010,stoeber2013} 
\begin{onelist}
\item
  All three-dimensional Gaussian and Student $t$-copulas are simplified.

\item 
  The only three-dimensional Archimedean copulas that are simplified are those of Clayton type.
\end{onelist}
\end{example}{}

We now focus on empirical copulas, show that they are simplified and then conclude that $\CC^3_{\rm S}$ is dense in 
$(\CC^3, d_{\infty})$ (Corollary \ref{S.Dense}).

Consider a random vector $(\XXX,Y)$ with continuous univariate marginals and suppose that 
$(\mathbf{X}_1,Y_1),\ldots, (\mathbf{X}_n,Y_n)$ is a sample from $(\mathbf{X},Y)$. 
  Since the univariate marginals are continuous w.l.o.g. we can assume that there are no ties.
  Let $\hat{C}_n$ denote the empirical copula (by which we mean the unique copula determined by trilinear interpolation of the empirical subcopula).
  Then there exist two permutations $\sigma_1,\sigma_2$ of $\{1,\ldots,n\}$ such that the density $\hat{c}_n$ of $\hat{C}_n$ is given by (uniform distribution on $n$ cubes of volume $\frac{1}{n^3}$)
$$
  \hat{c}_n (u_1,u_2,v) 
	= n^2 \sum_{i=1}^n \mathds{1}_{I_i^1}(u_1) \mathds{1}_{I_i^2}(u_2) \mathds{1}_{V_i}(v)
$$
where 
$I^1_i=(\frac{\sigma_1(i)-1}{n},\frac{\sigma_1(i)}{n}]$, 
$I^2_i=(\frac{\sigma_2(i)-1}{n},\frac{\sigma_2(i)}{n}]$ and
$V_i=(\frac{i-1}{n},\frac{i}{n}]$, 
so the Markov kernel of $\hat{C}_n$ fulfills 
\begin{equation}\label{eq:empcopkern}
  K_{\hat{C}_n}(v,[0,u_1] \times [0,u_2]) 
	= n^2 \sum_{i=1}^n
    \left(\int_{[0,u_1]}\mathds{1}_{I_i^1}(t) \; \mathrm{d} \lambda(t) \int_{[0,u_2]}\mathds{1}_{I_i^2}(s) \; \mathrm{d} \lambda(s) \right)
	  \mathds{1}_{V_i}(v). 
\end{equation}

\begin{theorem}{} \label{Emp.Subset.S}
  Every three-dimensional empirical copula is simplified.
\end{theorem}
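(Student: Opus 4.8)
The plan is to read off both defining properties of a simplified copula---membership in $\CC^3_{\rm c}$ and in $\CC^3_{\rm GS}$---directly from the explicit kernel formula in Equation (\ref{eq:empcopkern}). The single structural observation that drives everything is that, for each fixed conditioning value, the kernel factorizes into the product of its two univariate conditional marginals, so that the conditional copula is the independence copula $\Pi$ irrespective of the conditioning value. Everything else is then bookkeeping.

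First I would fix $i \in \{1,\dots,n\}$ and $v \in V_i$. Since the slabs $V_1,\dots,V_n$ partition $\I$ up to the finite (hence $\lambda$-null) set of boundary points $\{0,\tfrac1n,\dots,1\}$, exactly one summand in (\ref{eq:empcopkern}) survives, giving
\begin{equation*}
  K_{\hat{C}_n}(v, [0,u_1] \times [0,u_2])
	= n^2 \, \lambda\big([0,u_1] \cap I_i^1\big) \, \lambda\big([0,u_2] \cap I_i^2\big).
\end{equation*}
Setting $u_2 = 1$ and then $u_1 = 1$ and invoking (\ref{Eq.Cop.Margins.1}), I would identify the conditional univariate marginals as
\begin{equation*}
  F_{1|3}(u_1|v) = n\,\lambda\big([0,u_1] \cap I_i^1\big),
	\qquad
	F_{2|3}(u_2|v) = n\,\lambda\big([0,u_2] \cap I_i^2\big),
\end{equation*}
each being the distribution function of the uniform law on $I_i^1$ resp.\ $I_i^2$. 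These are continuous in their argument for every such $v$, and since the excluded boundary values of $v$ form a $\lambda$-null set, almost all conditional marginals are continuous; hence $\hat{C}_n \in \CC^3_{\rm c}$.

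Comparing the two displays then yields the factorization
\begin{equation*}
  K_{\hat{C}_n}(v, [0,u_1] \times [0,u_2])
	= F_{1|3}(u_1|v) \, F_{2|3}(u_2|v)
	= \Pi\big(F_{1|3}(u_1|v), F_{2|3}(u_2|v)\big)
\end{equation*}
for $\lambda$-almost every $v \in \I$. This shows that the conditional copula $(\hat{C}_n)_{12;3}^v$ equals $\Pi$ for almost all $v$ and, in particular, does not depend on $v$; choosing $A = \Pi$ in Equation (\ref{Eq.GS.1}) therefore gives $\hat{C}_n \in \CC^3_{\rm GS}$. Combining the two parts produces $\hat{C}_n \in \CC^3_{\rm GS} \cap \CC^3_{\rm c} = \CC^3_{\rm S}$, which is the assertion.

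The hard part will be essentially nonexistent: the whole statement collapses to the observation that, conditioned on lying in a slab $V_i$, the first two coordinates are independent uniform variables---a consequence of the empirical copula spreading its mass uniformly over cubes. The only step demanding (mild) care is the null-set bookkeeping at the slab boundaries when invoking the ``almost all $t$'' clauses in the definitions of $\CC^3_{\rm c}$ and $\CC^3_{\rm GS}$, which is handled by noting the finiteness of the boundary set $\{0,\tfrac1n,\dots,1\}$.
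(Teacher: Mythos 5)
Your argument is correct and follows essentially the same route as the paper: both identify the conditional univariate marginals of $\hat{C}_n$ as continuous (uniform-on-interval) distribution functions and observe that the kernel in Equation (\ref{eq:empcopkern}) factorizes, so the conditional copula is $\Pi$ independently of the conditioning value. Your added care about the $\lambda$-null boundary set is a harmless refinement of what the paper leaves implicit.
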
{}
\begin{proof}{} 
Considering that the (conditional) univariate marginal distribution functions 
$(\hat{F}_n)_{1 \vert 3} (\cdot \vert v)$, $(\hat{F}_n)_{2 \vert 3} (\cdot \vert v)$ of $\hat{C}_n$ 
are continuous and given by
\begin{eqnarray*}
  (\hat{F}_n)_{1 \vert 3} (u_1 \vert v) 
	& = & n \sum_{i=1}^n \left( \int_{[0,u_1]}\mathds{1}_{I_i^1}(t) \; \mathrm{d} \lambda(t) \right) \mathds{1}_{V_i}(v) 
	\\
	(\hat{F}_n)_{2 \vert 3} (u_2 \vert v) 
	& = & n \sum_{i=1}^n \left( \int_{[0,u_2]}\mathds{1}_{I_i^2}(t) \; \mathrm{d} \lambda(t) \right) \mathds{1}_{V_i}(v) 
\end{eqnarray*}
using Equation (\ref{eq:empcopkern}) it follows immediately that $K_{\hat{C}_n}(v,[0,u_1] \times [0,u_2])$ can be 
expressed as
$$
  K_{\hat{C}_n}(v,[0,u_1] \times [0,u_2]) 
	= \Pi \big( (\hat{F}_n)_{1 \vert 3} (u_1 \vert v), (\hat{F}_n)_{2 \vert 3} (u_2 \vert v) \big)
$$
from which it follows that $\hat{C}_n$ is simplified.
\end{proof}{}

Since the collection of all empirical copulas is dense in $(\CC^3, d_{\infty})$ (see \cite[Proposition 3.2]{durantefs2010}),  
Theorem \ref{Emp.Subset.S} has the following consequence
(for a stronger and more general result see Corollary \ref{D.S.Dense}):

\begin{corollary}{} \label{S.Dense}
  The collection of all simplified copulas is dense in $(\CC^3, d_{\infty})$. 
\end{corollary}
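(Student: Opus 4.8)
The plan is to observe that the statement is an immediate consequence of the two facts that have just been assembled, combined with the elementary topological principle that any set containing a $d_\infty$-dense subset is itself $d_\infty$-dense. Thus I would not attempt any fresh approximation construction; instead the strategy is purely to transfer the already-established density of empirical copulas to the larger family $\CC^3_{\rm S}$ via a set-inclusion argument.

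Concretely, let $\mathcal{E}$ denote the family of all three-dimensional empirical copulas. First I would record the inclusion $\mathcal{E} \subseteq \CC^3_{\rm S}$: this is exactly the content of Theorem \ref{Emp.Subset.S}, which shows that every $\hat{C}_n$ has continuous (conditional) univariate marginals and satisfies Equation (\ref{Eq.GS.1}) with the constant choice $A = \Pi$, hence lies in $\CC^3_{\rm GS} \cap \CC^3_{\rm c} = \CC^3_{\rm S}$. Second I would invoke the cited density result (Proposition 3.2 in \cite{durantefs2010}), which guarantees that $\mathcal{E}$ is $d_\infty$-dense in $\CC^3$, i.e. $\overline{\mathcal{E}} = \CC^3$, where the closure is taken with respect to $d_\infty$.

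The conclusion then follows by monotonicity of the closure operator under set inclusion. From $\mathcal{E} \subseteq \CC^3_{\rm S} \subseteq \CC^3$ we obtain the chain $\CC^3 = \overline{\mathcal{E}} \subseteq \overline{\CC^3_{\rm S}} \subseteq \CC^3$, which forces $\overline{\CC^3_{\rm S}} = \CC^3$; that is, $\CC^3_{\rm S}$ is dense in $(\CC^3, d_\infty)$, as claimed.

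There is essentially no genuine obstacle here, since all the substantive work has already been carried out in Theorem \ref{Emp.Subset.S} and in the cited denseness of empirical copulas; the only point requiring a moment's care is to confirm that the empirical copulas indeed satisfy \emph{both} defining conditions of $\CC^3_{\rm S}$ (generalized simplified \emph{and} continuous conditional marginals), which is precisely what the proof of Theorem \ref{Emp.Subset.S} provides. I would therefore keep the argument to a single line, merely citing the inclusion and the density and applying the abstract transfer principle.
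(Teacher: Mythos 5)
Your proposal is correct and follows exactly the paper's own argument: the corollary is deduced from Theorem \ref{Emp.Subset.S} (empirical copulas are simplified) together with the $d_\infty$-denseness of empirical copulas from \cite[Proposition 3.2]{durantefs2010}, via the trivial observation that a superset of a dense set is dense. Nothing further is needed.
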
{}

Although every copula can be approximated arbitrarily well by simplified copulas a reasonable approximation from the same 
Fr{\'e}chet class might not be possible as the following example illustrates: 

\begin{example}{} \label{SVC.PInd.2}
  (Class $\mathcal{F}^3_\Pi$, cont.) \\
For the non--simplified copula $C^{\rm Cube} \in \mathcal{F}^3_\Pi$ introduced in Example \ref{SVC.PInd} there exists 
some $\varepsilon > 0$ such that for every simplified copula $D \in \mathcal{F}^3_\Pi$ we have
$$
  d_\infty \big( C^{\rm Cube},D \big)	> \varepsilon,
$$
which can be shown as follows: Recall that every simplified copula $D$ from this class fulfills 
$D (\uuu,v) = A (\uuu) \, v$
for all $(\uuu,v) \in \I^2 \times \I$,
where $A$ is some bivariate copula.
  Furthermore recall that $C^{\rm Cube}$ fulfills  
$$
  C^{\rm Cube}(\uuu,v)
	= \int\limits_{[0,v]} (C^{\rm Cube})_{12;3}^t \, (\uuu) \;\mathrm{d}\lambda(t)
$$
for all $(\uuu,v) \in \I^2 \times \I$,
where 
$(C^{\rm Cube})_{12;3}^t = A^1$ for almost all $t \in \big(0,\tfrac{1}{2}\big)$ and 
$(C^{\rm Cube})_{12;3}^t = A^2$ for almost all $t \in \big(\tfrac{1}{2},1\big)$,
and $A^1$ and $A^2$ are bivariate copulas with $A^1 \neq A^2$ (see Example \ref{SVC.PInd}).
  Thus, 
$$
  C^{\rm Cube}(\uuu,v) 
	= 
	\begin{cases}
	  A^1 (\uuu) \, v	 															
		& v \in \big[ 0, \tfrac{1}{2} \big]
		\\
		A^1 (\uuu) \, \tfrac{1}{2} + A^2 (\uuu) \, \big( v - \tfrac{1}{2}	\big)												
		& v \in \big( \tfrac{1}{2}, 1 \big]
	\end{cases}
$$
and hence
\begin{eqnarray*}
  \lefteqn{\big| C^{\rm Cube}(\uuu,v) - D(\uuu,v) \big|}
	\\
	& = & 
	\begin{cases}
	  \big| A^1 (\uuu) - A (\uuu) \big|	\, v								
		& v \in \big[ 0, \tfrac{1}{2} \big]
		\\
		\big| \big[ A^1 (\uuu) - A(\uuu) \big] \, \tfrac{1}{2} + \big[ A^2 (\uuu) - A(\uuu) \big] \, \big( v - \tfrac{1}{2}	\big)	\big|	
		& v \in \big( \tfrac{1}{2}, 1 \big]
	\end{cases}
\end{eqnarray*}
for all $(\uuu,v) \in \I^2 \times \I$.
  If $A=A^1$, then 
$$d_\infty \big( C^{\rm Cube}, D \big)
	\geq \big| C^{\rm Cube} \big( \tfrac{\bf 1}{\bf 2},\tfrac{3}{4} \big) - D \big( \tfrac{\bf 1}{\bf 2},\tfrac{3}{4} \big) \big|
	 	=  \tfrac{1}{4} \; \big| A^2 \big( \tfrac{\bf 1}{\bf 2} \big) - A^1 \big( \tfrac{\bf 1}{\bf 2} \big) \big|
		=  \tfrac{1}{4} \; \big| 0 - \tfrac{1}{2} \big|
		=  \tfrac{1}{8}.$$
  If $A \neq A^1$ then there exists some $\uuu^\ast \in \I^2$ and some $\varepsilon >0$ with
$|A(\uuu^\ast) - A^1(\uuu^\ast)| > 4 \, \varepsilon$ and hence
$$
  d_\infty \big( C^{\rm Cube}, D \big)
	\geq \big| C^{\rm Cube} \big( \uuu^\ast, \tfrac{1}{4} \big) - D \big( \uuu^\ast ,\tfrac{1}{4} \big) \big|
	  =  \tfrac{1}{4} \; \big| A^1 (\uuu^\ast) - A (\uuu^\ast) \big|
		>  \varepsilon
$$
  Thus $C^{\rm Cube}$ can not be approximated arbitrarily well by a simplified copula $D$ from the class $\mathcal{F}^3_\Pi$.
\end{example}{}

We now focus on the afore-mentioned stronger metrics or finer topologies on $\CC$ (or important subclasses). To simplify notation we will write $\CC^3_{{\rm ac}, >0}$ for the collection of all 
absolutely continuous copulas with positive density.
\begin{theorem}\label{thm:nowhere.dense} \leavevmode
\begin{enumerate}
\item
The collection of all simplified copulas is nowhere dense in $(\CC^3,D_1)$ and $(\CC^3,D_\infty)$.
\item The collection of all simplified copulas is nowhere dense in $\CC^3$ with respect to the topology induced by weak 
conditional convergence.
\item
The collection of all simplified copulas with positive density is nowhere dense in $(\CC^3_{{\rm ac}, >0},D_1)$ and $(\CC^3_{{\rm ac}, >0},D_\infty)$.
\end{enumerate}
\end{theorem}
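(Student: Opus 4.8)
\textit{Strategy.} My plan is to deduce nowhere density from the two complementary facts that (a) the $D_1$-closure of the simplified copulas is contained in the set $\CC^3_{\rm GS}$ of generalized simplified copulas, and (b) the non-generalized-simplified copulas are dense. Since $D_1$ and $D_\infty$ generate the same topology on $\CC^3$ (and on $\CC^3_{{\rm ac},>0}$), it suffices to treat $D_1$; weak conditional convergence, being finer than $D_1$, will then be handled with only minor extra care.

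\textit{Main obstacle: a closedness lemma.} The crux is to show that whenever $(C_n)_{n\in\N}$ are generalized simplified and $C_n\to C$ in $D_1$, then $C\in\CC^3_{\rm GS}$. I would first reduce $D_1$-convergence to $\lambda$-a.e.\ weak convergence of the conditional distributions $K_{C_n}(v,\cdot)\to K_C(v,\cdot)$ on $\mathcal{B}(\I^2)$: passing to a subsequence gives $K_{C_n}(v,[\nul,\uuu])\to K_C(v,[\nul,\uuu])$ for $\lambda^3$-almost all $(\uuu,v)$, and after fixing a countable dense set of points $\uuu$, monotonicity of the conditional sub-distribution functions upgrades this to weak convergence of $K_{C_n}(v,\cdot)$ for a.e.\ $v$. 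Writing $K_{C_n}(v,[\nul,\uuu])=A_n\big(F_{1|3}(u_1|v),F_{2|3}(u_2|v)\big)$ with a $v$-independent conditional copula $A_n$, I would extract (by compactness of $(\CC^2,d_\infty)$) a further subsequence with $A_n\to A$ uniformly, note that marginalization is continuous under weak convergence so that the conditional marginals converge weakly for a.e.\ $v$ (the mechanism behind Theorem \ref{wcc.margins}), and combine uniform convergence $A_n\to A$ with continuity of the limit copula $A$ to identify $K_C(v,[\nul,\uuu])=A\big(F_{1|3}(u_1|v),F_{2|3}(u_2|v)\big)$ at a.e.\ $(\uuu,v)$. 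This realises $C$ through a single bivariate copula, so $C\in\CC^3_{\rm GS}$; hence $\CC^3_{\rm GS}$ is $D_1$-closed and $\overline{\CC^3_{\rm S}}\subseteq\CC^3_{\rm GS}$. Because weak conditional convergence implies $D_1$-convergence, the $D_1$-closed set $\CC^3_{\rm GS}$ is also closed for weak conditional convergence, so the same inclusion holds there; and in the subspace $\CC^3_{{\rm ac},>0}$, where positive density forces continuous conditional marginals and therefore $\CC^3_{\rm GS}=\CC^3_{\rm S}$, the argument shows that the positive-density simplified copulas are relatively closed.

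\textit{Density of the complement.} It remains to prove that $\CC^3\setminus\CC^3_{\rm GS}$ is dense; this forces the empty interior of $\overline{\CC^3_{\rm S}}$. For a center $C\in\CC^3_{\rm c}$ with (constant) conditional copula $A$ I would keep the conditional marginals $F_{1|3}(\cdot|v),F_{2|3}(\cdot|v)$ fixed and replace $A$ by a conditional copula $A_v^\varepsilon$ that alternates, along a fine partition of the $v$-axis, between $A$ and a fixed small perturbation $A+\varepsilon\Delta$. Altering only the coupling of the unchanged conditional marginals leaves the bivariate margins $C_{13},C_{23}$ (and all one-dimensional margins) intact, so the resulting kernel defines a genuine copula; for each fixed $\varepsilon>0$ it is genuinely non-simplified (hence, the marginals being continuous, non-GS), while $A_v^\varepsilon\to A$ uniformly in $v$ as $\varepsilon\to0$ yields weak conditional, and a fortiori $D_1$, convergence to $C$. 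For part (iii) I would instead use the margin-preserving additive density perturbation $c\mapsto c+\varepsilon\,\phi\otimes\psi\otimes\chi$ with mean-zero factors and non-constant $\chi$, which stays strictly positive for small $\varepsilon$ on the $D_1$-dense subclass of densities bounded away from $0$ (obtained by mixing with $\Pi$) and is non-simplified for a suitable choice of the factors.

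\textit{The one remaining case.} The alternation above is vacuous at a generalized simplified center with degenerate conditional marginals -- most strikingly at a completely dependent copula, which by Theorem \ref{ComplDep.Subset.GS} is generalized simplified for \emph{every} $A$, so that changing the conditional copula has no effect. Here I would first approximate $C$, weakly conditional, by copulas whose conditional marginals are continuous (by thickening the conditional mass of $C$ into shrinking, margin-preserving neighbourhoods) and only then alternate the conditional copula. This smoothing near degenerate centers is the most technical but conceptually routine point of the density argument. Combining the two pillars, $\overline{\CC^3_{\rm S}}$ has empty interior in $(\CC^3,D_1)$ and $(\CC^3,D_\infty)$ (which share the same topology), in the weak-conditional topology, and relatively in $(\CC^3_{{\rm ac},>0},D_1)$, which is the assertion.
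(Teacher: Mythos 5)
Your proposal is correct in its overall architecture and reaches the theorem, but it takes a genuinely different route through its two pillars, and two of your steps are still only sketched. For the \enquote{closure misses a dense set} part the paper never proves that $\CC^3_{\rm GS}$ (or $\CC^3_{\rm S}$) is $D_1$-closed; instead it establishes a \emph{quantitative} non-approximation bound: by Lemma \ref{App.3} and Theorem \ref{Thm.D1.TV} one has $\int_{\I^3}\vert C^t_{12;3}(\sss)-\tilde C^t_{12;3}(\sss)\vert\,\mathrm{d}\lambda^3(\sss,t)\le a(2+a)\,D_\infty(C,\tilde C)$ whenever $C$ has density bounded by $a$ (Lemma \ref{lem_Jn}), while for a non-simplified checkerboard $C$ of resolution $N$ this integral is bounded below by $\Delta(C)/N>0$ \emph{uniformly over all simplified} $\tilde C$ (Lemma \ref{lem:Delta(C)}); together with the $D_1$-, $D_\infty$- and weak-conditional density of non-simplified checkerboards (Lemma \ref{lem:dense}, obtained by convexly mixing checkerboard approximants with non-simplified checkerboards of the same resolution and the same $(1,3)$- and $(2,3)$-marginals, and additionally mixing with $\Pi$ for the positive-density case) this yields all three assertions at once. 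Your route replaces the quantitative bound by the stronger structural claim that $\CC^3_{\rm GS}$ is $D_1$-closed, proved via subsequence extraction, a.e.\ weak convergence of the kernels, compactness of $(\CC^2,d_\infty)$ and identification of the limit kernel; this is more informative about where the closure lives, but it costs you the delicate measure-theoretic upgrades you only sketch, and it obliges you to prove density of the complement of the \emph{larger} class $\CC^3_{\rm GS}$, which is exactly why you need the extra \enquote{remaining case} at degenerate centers. The two places where your write-up is still an assertion rather than a proof are precisely there: (i) the margin-preserving, weakly conditionally convergent \enquote{thickening} of completely dependent (or otherwise degenerate) centers, which the paper sidesteps entirely by first reducing to checkerboards, where the perturbation is an explicit convex combination preserving resolution and the $(1,3)$- and $(2,3)$-marginals; and (ii) the claim that the additive density perturbation in part (3) is non-simplified \enquote{for a suitable choice of the factors}. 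Neither looks unfixable, but as written they are the gaps separating your sketch from a complete proof, and closing them by the paper's checkerboard reduction would be the shortest repair.
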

\begin{proof}
To prove the first assertion assume that the $D_1$-closure of the family of all simplified copulas contains an open ball 
$O_{D_1}(C,r)=\{A \in \CC^3: D_1(A,C)<r\}$ with $C \in \CC$ and $r>0$. 
Since according to Lemma \ref{lem:dense} non-simplified checkerboard copulas are dense in $(\CC^3,D_1)$ 
we can find a non-simplified checkerboard copula $C^* \in O_{D_1}(C,r)$. 
Since, by assumption, the $D_1$-closure of the family of all simplified copulas contains $O_{D_1}(C,r)$ 
there exists a sequence $(C_n)_{n \in \mathbb{N}}$ of simplified copulas with 
$\lim_{n \rightarrow \infty} D_1(C_n,C^*) = 0$,
a contradiction to Lemma \ref{lem:Delta(C)}.  
\\
Proceeding analogously yields the second and the third assertion.
\end{proof}

Theorem \ref{thm:nowhere.dense} and Theorem \ref{Thm.D1.TV} imply the following two striking results:\pagebreak
\begin{theorem} \label{Non.Dense.TV} \leavevmode
\begin{enumerate}
\item
The collection of all simplified copulas is nowhere dense in $(\CC^3,TV)$.
\item
The collection of all simplified copulas with positive density is nowhere dense in $(\CC^3_{{\rm ac}, >0},TV)$.
\end{enumerate}
\end{theorem}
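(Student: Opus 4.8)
The plan is to deduce the statement from the chain $D_1 \le D_\infty \le 2\,TV$ established in Theorem \ref{Thm.D1.TV}, which I will use in two ways. First, taking the infimum over the family $\CC^3_{\rm S}$ of simplified copulas gives
$d_{TV}(C',\CC^3_{\rm S}) \ge \tfrac12\, d_{D_1}(C',\CC^3_{\rm S})$
for every $C'\in\CC^3$, where $d_{TV}$ and $d_{D_1}$ denote the distance of a copula to a set. Second, since in a metric space the closure of a set is exactly its zero-distance set, this inequality shows that a copula which is $D_1$-bounded away from $\CC^3_{\rm S}$ is automatically $TV$-bounded away, hence lies outside the $TV$-closure $\overline{\CC^3_{\rm S}}^{TV}$. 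I emphasize that one cannot argue purely abstractly that nowhere-denseness transfers from the coarser $D_1$-topology to the finer $TV$-topology (it does not, in general), so the inequality has to be fed into the concrete construction below.

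Consequently, to prove that $\overline{\CC^3_{\rm S}}^{TV}$ has empty $TV$-interior it suffices to show that the set $\mathcal U := \{C'\in\CC^3 : d_{D_1}(C',\CC^3_{\rm S})>0\}=\CC^3\setminus\overline{\CC^3_{\rm S}}^{D_1}$ of \emph{robustly non-simplified} copulas is $TV$-dense. Indeed, for an arbitrary ball $O_{TV}(C,r)$ any $C^*\in\mathcal U\cap O_{TV}(C,r)$ satisfies $d_{TV}(C^*,\CC^3_{\rm S})\ge\tfrac12\, d_{D_1}(C^*,\CC^3_{\rm S})>0$, and thus witnesses $O_{TV}(C,r)\not\subseteq\overline{\CC^3_{\rm S}}^{TV}$. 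By Theorem \ref{thm:nowhere.dense} the set $\mathcal U$ is already $D_1$-dense; the real content is to upgrade this to $TV$-density.

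To produce such a $C^*$ arbitrarily $TV$-close to a given $C$, I would perturb only the conditional copulas of $C$ while leaving its conditional marginals $F_{1|3}(\cdot|v),F_{2|3}(\cdot|v)$ untouched. Writing $A^v$ for a conditional copula of $C$ and fixing $\eta\in(0,r)$, I define $C^*$ through the Markov kernel whose conditional copula is $(1-\eta)A^v+\eta M$ for $v\in[0,\tfrac12)$ and $(1-\eta)A^v+\eta W$ for $v\in[\tfrac12,1]$. Each of these is again a copula, so the prescription yields a genuine copula $C^*$ with the same bivariate margins $C_{13},C_{23}$ as $C$; since conditioning and pushforward do not increase total variation, one obtains $TV(C^*,C)\le\eta<r$. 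On the other hand the conditional copulas of $C^*$ on the two halves of $\I$ differ by the fixed amount $\eta(M-W)$, and an averaging argument exactly as in Example \ref{SVC.PInd.2} (respectively Lemma \ref{lem:Delta(C)}) should show that no single copula $D_1$-approximates both halves, yielding $d_{D_1}(C^*,\CC^3_{\rm S})\ge c\,\eta>0$. This places $C^*$ in $\mathcal U\cap O_{TV}(C,r)$ and, by the reduction above, proves part (1). For part (2) I would run the same scheme inside $\CC^3_{{\rm ac},>0}$, carrying out the perturbation at the level of densities so that positivity and absolute continuity are preserved and invoking part (3) of Theorem \ref{thm:nowhere.dense}; since positive-density checkerboard copulas are $TV$-dense in $\CC^3_{{\rm ac},>0}$, the corresponding set $\mathcal U\cap\CC^3_{{\rm ac},>0}$ is $TV$-dense there and the reduction concludes.

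The step I expect to be the main obstacle is the uniform lower bound $d_{D_1}(C^*,\CC^3_{\rm S})\ge c\,\eta$. In Example \ref{SVC.PInd.2} the conditional marginals are the identity, so a simplified competitor has a $v$-independent kernel and the separation is transparent; for a general center $C$ — in particular for a \emph{singular} $C$, around which no absolutely continuous (hence no checkerboard) copula is $TV$-close — the competing simplified copula may have $v$-dependent, possibly degenerate conditional marginals, and the separation argument underlying Lemma \ref{lem:Delta(C)} must be run in this generality. Once this $D_1$-separation is secured, the inequality of Theorem \ref{Thm.D1.TV} transfers it to $TV$ at no cost, which is exactly how Theorems \ref{thm:nowhere.dense} and \ref{Thm.D1.TV} combine to give the claim.
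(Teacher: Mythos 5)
Your diagnosis of the logical situation is more careful than the paper itself: the paper derives this theorem in a single line from Theorems \ref{thm:nowhere.dense} and \ref{Thm.D1.TV}, implicitly treating the transfer of nowhere-density from the coarser $D_1$-topology to the finer $TV$-topology as automatic, whereas you correctly observe that such a transfer fails in general and that one must actually exhibit, inside every $TV$-ball, a copula that stays away from $\CC^3_{\rm S}$. Your reduction --- it suffices that the set $\mathcal U$ of copulas $D_1$-bounded away from $\CC^3_{\rm S}$ be $TV$-dense, since the inequality $D_1\le 2\,TV$ then keeps such a copula outside the $TV$-closure --- is sound, though it demands more than necessary: $\overline{\CC^3_{\rm S}}^{TV}$ can be strictly smaller than $\overline{\CC^3_{\rm S}}^{D_1}$ (for instance $M$ lies in the latter but not in the former), so it would already be enough to produce, in each $TV$-ball, a point outside the former set.

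The genuine gap lies in the construction that is supposed to establish $TV$-density of $\mathcal U$. Replacing the conditional copulas $A^v$ by $(1-\eta)A^v+\eta M$ resp.\ $(1-\eta)A^v+\eta W$ while leaving the conditional marginals untouched does nothing whenever those marginals are $\{0,1\}$-valued: every bivariate copula $B$ satisfies $B(a,b)=ab$ for $a,b\in\{0,1\}$, so the Markov kernel, and hence $C^*$, is unchanged. This happens exactly for completely dependent centers, e.g.\ $C=M$. Since the trivariate Gaussian copulas are simplified and converge weakly conditionally (hence in $D_1$) to $M$ as the correlations tend to $1$, one has $M\in\overline{\CC^3_{\rm S}}^{D_1}$, so for $C=M$ your construction returns $C^*=M\notin\mathcal U$ and yields no witness for the ball $O_{TV}(M,r)$. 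Thus the step you yourself flag as the main obstacle --- the uniform lower bound $d_{D_1}(C^*,\CC^3_{\rm S})\ge c\,\eta$ --- is not merely unproved; for singular centers the perturbation does not even move $C$, so there is nothing to bound. As written the proposal therefore does not prove the theorem; closing it would require a perturbation acting at the level of the measures rather than of the conditional copulas, together with a separation lemma valid beyond the bounded-density setting of Lemmas \ref{lem_Jn} and \ref{lem:Delta(C)} (or, alternatively, a direct argument exploiting that $TV$-limits of copulas with atomless conditional distributions cannot charge lower-dimensional sets).
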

\begin{theorem} \label{Non.Dense.KL}
The collection of all simplified copulas with positive density is nowhere dense in $(\CC^3_{{\rm ac}, >0},KL)$.
\end{theorem}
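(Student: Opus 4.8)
The plan is to run the same contradiction scheme as for Theorem~\ref{Non.Dense.TV}, inserting the generalized Pinsker inequality as the only extra ingredient and concentrating the real work in a single perturbation step. First I would show that the family $\CC^3_{\rm S}\cap\CC^3_{{\rm ac},>0}$ of simplified copulas with positive density is \emph{closed} in $(\CC^3_{{\rm ac},>0},KL)$: if $(C_n)_{n\in\N}$ lies in this family and $C^\ast\in\CC^3_{{\rm ac},>0}$ satisfies $KL(C_n,C^\ast)\to 0$, then the generalized Pinsker inequality (\cite{reid2009}) yields $TV(C_n,C^\ast)\to 0$, whence Theorem~\ref{Thm.D1.TV} gives $D_1(C_n,C^\ast)\to 0$; were $C^\ast$ non-simplified, this would contradict the strictly positive $D_1$-distance $\Delta(C^\ast)>0$ from $C^\ast$ to the simplified copulas furnished by Lemma~\ref{lem:Delta(C)}. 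Hence $C^\ast$ is simplified, the family is $KL$-closed, and proving nowhere-density reduces to showing that this family has \emph{empty $KL$-interior}, i.e. that every simplified $C_0\in\CC^3_{{\rm ac},>0}$ is a $KL$-limit of non-simplified positive-density copulas.

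For the decisive step I would approximate a given simplified $C_0$ (with density $c_0>0$) by a \emph{multiplicative} perturbation rather than an additive one. Concretely, I would construct a bounded, third-coordinate-dependent function $\rho$ and set
\[
  c_\varepsilon := c_0\,(1+\varepsilon\rho),
\]
where $\rho$ is chosen so that integrating out any one of the three coordinates annihilates $\int \rho\,c_0$ (this preserves all bivariate marginals, hence keeps the univariate marginals uniform, so $c_\varepsilon$ is a copula density whenever $\varepsilon\,\|\rho\|_\infty<1$) and so that the induced conditional copula genuinely depends on the third coordinate, rendering $c_\varepsilon$ non-simplified exactly as in Example~\ref{SVC.PInd}(2). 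Because the perturbation is proportional to $c_0$, the ratio $c_\varepsilon/c_0=1+\varepsilon\rho$ is uniformly bounded, so $\log(1+x)\le x$ together with $\int \rho\,c_0=0$ gives
\[
  0\le KL(C_\varepsilon,C_0)=\int c_0\,(1+\varepsilon\rho)\,\log(1+\varepsilon\rho)\;\mathrm{d}\lambda^3 \le \varepsilon^2\!\int c_0\,\rho^2\;\mathrm{d}\lambda^3 \le \varepsilon^2\,\|\rho\|_\infty^2\xrightarrow{\ \varepsilon\to 0\ }0 .
\]
Letting $\varepsilon\downarrow 0$ thus produces non-simplified positive-density copulas $KL$-converging to $C_0$, contradicting that $C_0$ is a $KL$-interior point and finishing the proof.

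The main obstacle is the construction of $\rho$, and in particular the insistence on a \emph{multiplicative} perturbation. An additive perturbation $c_0+\varepsilon p$ of the kind one uses to build non-simplified examples in the $d_\infty$-setting is useless here: where $c_0$ is small it both violates nonnegativity and, more seriously, drives the $KL$-divergence to $+\infty$ (the same pathology makes the naive smoothing $C_0\mapsto(1-\delta)C_0+\delta\Pi$ fail to converge in $KL$ for densities not bounded away from $0$). Tying the perturbation to $c_0$ removes this sensitivity but forces $\rho$ to satisfy three families of $c_0$-dependent mean-zero constraints while still breaking the simplifying structure; I would obtain such a $\rho$ by starting from a third-coordinate-dependent EFGM-type kernel and projecting it onto the constraint space in $L^2(c_0\,\mathrm{d}\lambda^3)$, the only genuinely technical point being to check that this projection does not kill the dependence of the conditional copula on the third coordinate. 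Finally, the identical perturbation, now with the easier bound $TV(C_\varepsilon,C_0)=\tfrac{\varepsilon}{2}\int c_0\,|\rho|\,\mathrm{d}\lambda^3\le\tfrac{\varepsilon}{2}\|\rho\|_\infty$, simultaneously settles the empty-interior step of Theorem~\ref{Non.Dense.TV}, so the two theorems are two instances of one and the same construction carried out in successively finer topologies.
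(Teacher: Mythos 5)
Your reduction of $KL$ to $D_1$ (Pinsker plus Theorem \ref{Thm.D1.TV}) followed by the $D_1$-obstruction of Lemma \ref{lem:Delta(C)} is exactly the mechanism the paper relies on --- it obtains Theorem \ref{Non.Dense.KL} as an immediate consequence of Theorem \ref{thm:nowhere.dense} and the implication chain $KL\Rightarrow TV\Rightarrow D_1$, i.e., by re-running the ball-contradiction of Theorem \ref{thm:nowhere.dense} inside a $KL$-ball. Your repackaging into ``closed plus empty interior'', however, has a genuine gap at the closedness step. Lemma \ref{lem:Delta(C)} provides a strictly positive $D_1$-distance to the simplified family only when the limit candidate is a non-simplified \emph{checkerboard}: the quantity $\Delta(C^\ast)$ is defined through the resolution $N$ of a checkerboard, and its proof passes through Lemma \ref{lem_Jn}, which needs a density bound $c\le a$. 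For an arbitrary non-simplified $C^\ast\in\CC^3_{{\rm ac},>0}$ (whose density may be unbounded) the lemma gives nothing, so you have not shown that the simplified family is $KL$-closed --- a question the paper deliberately sidesteps. Without closedness, perturbing only \emph{simplified} centres $C_0$ does not yield nowhere density: you must show the $KL$-\emph{closure} has empty interior, i.e., that every $KL$-ball around an \emph{arbitrary} $C\in\CC^3_{{\rm ac},>0}$ contains a copula lying at positive $D_1$-distance from all simplified copulas. That is precisely what the paper's argument achieves by planting, in every ball, a positive-density perturbation of a non-simplified checkerboard, to which Lemma \ref{lem:Delta(C)} genuinely applies.

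The multiplicative perturbation $c_\varepsilon=c_0(1+\varepsilon\rho)$ is an attractive device and your $KL$ estimate $KL(C_\varepsilon,C_0)\le\varepsilon^2\int c_0\rho^2\,\mathrm{d}\lambda^3$ is correct as written, but the decisive object $\rho$ is never produced: for an arbitrary simplified $c_0>0$ you need a bounded $\rho$ satisfying the $c_0$-weighted mean-zero constraints in each coordinate \emph{and} making the conditional copulas of $c_\varepsilon$ genuinely $t$-dependent, and you give no argument that the $L^2(c_0\,\mathrm{d}\lambda^3)$-projection of an EFGM-type kernel does not annihilate that $t$-dependence for unfavourable $c_0$. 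Even if this construction were completed, it only treats simplified centres and therefore cannot compensate for the missing closedness; the shortest repair is to abandon the closedness claim and instead prove $KL$-denseness, within $\CC^3_{{\rm ac},>0}$, of non-simplified copulas covered by Lemma \ref{lem:Delta(C)}.
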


Theorems \ref{thm:nowhere.dense}, \ref{Non.Dense.TV} and \ref{Non.Dense.KL} answer the question
\enquote{How dense does the set of simplified densities lie in the set of all densities?} 
posed by \citet{nagler2016} in a complete and definitive manner.

In the same article the authors also pose the question on
\enquote{how far off can we be by assuming a simplified model?} - one of the main objectives of the subsequent sections is
to answer this very question. Notice that, for this purpose, we can restrict ourselves to the metric $d_\infty$ since 
(according to the afore-mentioned results) simplified copulas are nowhere dense w.r.t. $D_1$, $D_\infty$, TV and KL.


\section{Simplified pair-copula constructions} 
\label{Sect.Pair}

Equation (\ref{Eq.Cop.1}) suggests the construction of a three-dimensional copula 
in terms of two families of (conditional) univariate marginal distribution functions characterizing the dependence structure between coordinates $1\&3$ and coordinates $2\&3$, respectively,
and (conditional) bivariate copulas representing the dependence structure between coordinates $1\&2$ conditional on 
the third variable. This just-mentioned construction principle is called \emph{vine decomposition} or 
\emph{pair-copula construction} (see \cite{aas2009,bedford2002}).
  In case the conditioning variable only enters indirectly through the conditional marginals
(as it is the case in Equation (\ref{Eq.GS.1}); see, e.g., \cite{joe1996} for an early reference),
the pair-copula construction is said to be \emph{simplified} (see \cite{hobaek2010}).


\subsection{Construction principle} \label{SC.SPCC}

Simplified pair-copula constructions are used to approximate the data generating copula 
(from $\CC^3_{\rm c}$)
by a simplified copula (from $\CC^3_{\rm S}$)
using the following hierarchical bottom-up algorithm based on Equation (\ref{Eq.GS.1}):
\begin{onelist}
\item
  Estimation of the (conditional) univariate marginal distribution functions
$F_{1|3}(.|t)$ and $F_{2|3}(.|t)$ conditional on $t$;

\item
  Estimation of the (conditional) copula $A$ of coordinates $1\&2$ conditional on variable $3$ assuming that the conditioning variable enters only through the arguments of the conditional copula $A$ (simplifying assumption).
\end{onelist}
  The estimation is either done step-by-step or jointly, parametric or non-parametric, for more information we refer to 
\cite{aas2009,acar2012,hobaek2013,hobaek2010,kauermann2014,nagler2016,spanhel2019} and the references therein.
  For an additional discussion about estimating conditional copulas satis\-fying the simplifying assumption (step (2)),
we additionally refer to \cite{derumigny2017,gijbels2015b,gijbels2015,portier2018}.

  The $3$-dimensional copula resulting from this algorithm is simplified and is said to be a 
\emph{simplified vine copula} (SVC).
  Apparently, the above algorithm and thus its output, the SVC,
depend on the estimation method used and also on the suitable family of copulas from which the estimators are selected. 
The above algorithm may certainly provide a reasonable estimator if the (data generating) copula is simplified.
The natural question arising at this point, however, is how well an SVC approximates the data generating copula if 
the latter fails to be simplified. We start with the following example also discussed in \cite[Section 5]{stoeber2013}:

\begin{example}{} \label{SC.SPCC.EFGM}
  The EFGM copula $C^{\rm EFGM} \in \mathcal{F}^3_\Pi$ introduced in Example \ref{SVC.PInd} is non-simplified.
  Minimizing the Kullback-Leibler divergence between the conditional copula and its estimator 
selected from the family of all bivariate EFGM copulas in step (2) yields the bivariate independence copula 
as the optimal approximation. The SVC selected by a step-by-step algorithm hence equals the three-dimensional 
independence copula. 
\\
  Comparing the data generating copula with its selected SVC 
yields a $d_\infty$-distance of $1/64$;
this equals $6.25\%$ of the maximal $d_\infty$-distance of two copulas within the (Fr{\'e}chet) class of all 
copulas having pairwise independent marginals
(using the results in \cite[Section 3.3]{nelsen2012} it is straightforward to verify that 
the diameter of this class is $1/4$).
\end{example}{}

We refer to \cite{acar2012, hobaek2010, stoeber2013} for more examples and comparisons of the data generating copula 
with its selected simplified vine copula whereby the quality of the approximations is judged quite differently. 

Aiming to obtain more general analytic results concerning the optimality of simplified pair-copula constructions,
in what follows we discuss the concept of partial vine copulas.


\subsection{Partial vine copulas (PVCs)}
  The basic idea behind a partial vine copula is that the conditional bivariate copulas of the original 
  three-dimensional copula are averaged (see \cite{spanhel2016,spanhel2019}): \\
Considering that for every $C \in \CC^3_{\rm c}$ the copula $C_{12;3}^t$ is unique for almost every $t \in \I$ it follows that
the function $C_p: \I^2 \to \I$, given by 
$$
  C_p (\sss)
	:= \int\limits_{\I} C_{12;3}^t (\sss) \;\mathrm{d}\lambda(t)
$$
is well--defined. In the sequel we will refer to $C_p$ as the \emph{partial copula} of $C$ (also see \cite{Bergsma2011}).
Coinciding with the expected conditional copula, the partial copula is often used as an approximation of the conditional copula (see \cite{spanhel2016, spanhel2019} for more information).
Given $C_p$ in the above setting the mapping $\psi: \CC^3_{\rm c} \to \CC^3_{\rm c}$, given by
\begin{eqnarray*}
  \big( \psi (C) \big) (\uuu,v)
	& := & \int\limits_{[0,v]} C_{p} \big( F_{1|3}(u_1|t), F_{2|3}(u_2|t) \big) \;\mathrm{d}\lambda(t)
\end{eqnarray*}
is well--defined and assigns to every copula $C \in \CC^3_{\rm c}$ a simplified copula $\psi(C)$. 
The copula $\psi (C)$ is referred to as the \emph{partial vine copula} of $C$ (with respect to the third coordinate)
in the sequel.
It is obvious that every partial vine copula is simplified.

  The transformation $\psi$ preserves the dependence structure between coordinates $1 \& 3$ as well as between 
  coordinates $2 \& 3$. The following lemma gathers some additional properties of $\psi$: 

\begin{lemma}{} \label{PVC.Lemma1}
  Suppose that $C \in \CC^3_{\rm c}$. Then the following assertions hold:
\begin{onelist}
\item
  The partial vine copula $\psi(C)$ of $C$ satisfies
$(\psi (C))_{13} = C_{13}$ as well as $(\psi (C))_{23} = C_{23}$.

\item 
  If $C$ is simplified then $\psi(C)=C$ holds.

\item
  The mapping $\psi: \CC^3_{\rm c} \to \CC^3_{\rm c}$ is not injective.
\end{onelist}
\end{lemma}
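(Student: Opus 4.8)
The plan is to treat the three assertions in order, relying throughout on the defining formula for $\psi(C)$ together with the marginal representations derived in the Remark following Equation (\ref{Eq.Cop.1}). For assertion (1) I would first note that the partial copula $C_p$ is itself a bivariate copula: as a $\lambda$-average of the copulas $C_{12;3}^t$ it inherits groundedness and the $2$-increasing property, and its margins are uniform because $C_{12;3}^t(s,1)=s=C_{12;3}^t(1,s)$ for almost every $t$ forces $C_p(s,1)=\int_\I s\,\mathrm{d}\lambda(t)=s$ and likewise $C_p(1,s)=s$. Setting $u_2=1$ in the definition of $\psi(C)$ and using $F_{2|3}(1|t)=1$ then gives
$$(\psi(C))_{13}(u_1,v)=\int_{[0,v]}C_p\big(F_{1|3}(u_1|t),1\big)\,\mathrm{d}\lambda(t)=\int_{[0,v]}F_{1|3}(u_1|t)\,\mathrm{d}\lambda(t),$$
which is exactly $C_{13}(u_1,v)$ by the marginal identity in Equation (\ref{Eq.Cop.Margins.1}); the assertion $(\psi(C))_{23}=C_{23}$ follows by the symmetric computation with $u_1=1$.

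For assertion (2), if $C$ is simplified then by definition there is a unique $A\in\CC^2$ with $C_{12;3}^t=A$ for almost every $t$, whence $C_p(\sss)=\int_\I A(\sss)\,\mathrm{d}\lambda(t)=A(\sss)$. Substituting $C_p=A$ into the definition of $\psi(C)$ reproduces precisely the representation (\ref{Eq.GS.1}) of $C$, so $\psi(C)=C$.

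For assertion (3) the cleanest route is to exploit that $\psi$ is a retraction onto $\CC^3_{\rm S}$. Indeed $\psi(C)$ is always simplified, so applying assertion (2) to it yields the idempotency $\psi(\psi(C))=\psi(C)$ for every $C\in\CC^3_{\rm c}$. Picking any non-simplified $C\in\CC^3_{\rm c}$ --- for example $C^{\rm EFGM}$ or $C^{\rm Cube}$ from Example \ref{SVC.PInd}, both of which lie in $\CC^3_{\rm c}$ since their conditional univariate marginals are continuous --- the two copulas $C$ and $\psi(C)$ are distinct (the former non-simplified, the latter simplified) yet satisfy $\psi(C)=\psi(\psi(C))$, exhibiting two points with a common image and thereby proving that $\psi$ is not injective.

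I do not expect any of the three steps to pose a serious obstacle; the only point requiring care is the preliminary verification in assertion (1) that $C_p$ is a copula, so that the margin evaluation $C_p(s,1)=s$ is legitimate, and this reduces to the stability of $\CC^2$ under $\lambda$-integration. Everything else is a direct substitution into the definition of $\psi$ together with the idempotency extracted from assertion (2).
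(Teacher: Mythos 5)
Your proof is correct. Parts (1) and (2) coincide with the paper's argument: the paper likewise reduces $(\psi(C))_{13}$ to $\int_{[0,v]}F_{1|3}(u_1|t)\,\mathrm{d}\lambda(t)$ via $F_{2|3}(1|t)=1$ (your explicit check that $C_p$ is a copula with uniform margins, so that $C_p(s,1)=s$, is exactly the step the paper leaves implicit), and the paper dismisses (2) as trivial for the same substitution reason you give. For (3) you take a mildly different route: the paper points to Example \ref{PVC.PInd.Ex2}, where it is computed explicitly that $\psi(C^{\rm EFGM})=\Pi=\psi(\Pi)$ (and likewise for $C^{\rm Cube}$ and $C^{\rm RCube}$), exhibiting several distinct preimages of $\Pi$; you instead extract the idempotency $\psi\circ\psi=\psi$ from (2) and the fact that every partial vine copula lies in $\CC^3_{\rm S}$ (which holds since $\psi(C)$ satisfies Equation (\ref{Eq.GS.1}) with $A=C_p$ and has the same conditional marginals as $C$ by (1)), so that any non-simplified $C\in\CC^3_{\rm c}$ and its image $\psi(C)$ form a colliding pair. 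Both arguments need the same external input, namely the existence of a non-simplified copula in $\CC^3_{\rm c}$, supplied by Example \ref{SVC.PInd}; your version avoids computing $\psi$ of that copula explicitly and makes the retraction structure of $\psi$ visible, while the paper's version is more concrete. No gaps.
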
{}

\begin{proof}{}
  Since $F_{1|3}(1|t)=1=F_{2|3}(1|t)$ for almost every $t \in \I$ we have 
\begin{eqnarray*}
  (\psi (C))_{13} (u_1,v)
	& = & \int\limits_{[0,v]} C_{p} \big( F_{1|3}(u_1|t), F_{2|3}(1|t) \big) \;\mathrm{d}\lambda(t)
	\\
	& = & \int\limits_{[0,v]} F_{1|3}(u_1|t) \; \mathrm{d} \lambda(t) = C_{13} (u_1,v) 
\end{eqnarray*}
for all $(u_1,v) \in \I^2$. The identity $(\psi (C))_{23} = C_{23}$ follows in the same manner. 
  Assertion (2) is trivial and Assertion (3) follows from Example \ref{PVC.PInd.Ex2} below.
\end{proof}{}

\begin{example}{} \label{PVC.PInd.Ex1} 
(Class $\mathcal{F}^3_\Pi$, cont.) \\
For every $C \in \mathcal{F}^3_\Pi$ the identity
$$
  (\psi(C)) (\uuu,v)
	= C_{12} (\uuu) \, v 
$$
holds for all $(\uuu,v) \in \I^2 \times \I$.
\\
In fact, considering that 
$F_{1|3}(s_1|t) = s_1$ and 
$F_{2|3}(s_2|t) = s_2$ 
hold for all $\sss \in \I^2$ and almost all $t \in \I$ we get    
$$
  C_p (\sss)
	= \int\limits_{\I} C_{12;3}^t \big( F_{1|3}(s_1|t), F_{2|3}(s_2|t) \big) 
		\; \mathrm{d} \lambda(t)
  = C (\sss,1)
	= C_{12} (\sss)
$$
for all $\sss \in \I^2$. 
Having this, the fact that $(\psi(C)) (\uuu,v) =  C_{p} (\uuu) \, v = C_{12} (\uuu) \, v $
holds for all $(\uuu,v) \in \I^2 \times \I$ follows immediately.
\end{example}{}

As a consequence of Example \ref{PVC.PInd.Ex1}, if $C_{13} = \Pi = C_{23}$ and, additionally, $C_{12} = \Pi$, then $$\psi(C)=\Pi$$ follows although, in general, $C \neq \Pi$. This fact applies in particular to the following copulas: 

\begin{example}{} \label{PVC.PInd.Ex2} \leavevmode
\begin{onelist}
\item
  The EFGM copula $C^{\rm EFGM} \in \mathcal{F}^3_\Pi$ introduced in Example \ref{SVC.PInd}
is non-simplified, 
satisfies 
$$
  C^{\rm EFGM}_{12} = C^{\rm EFGM}_{13} = C^{\rm EFGM}_{23} = \Pi
	\qquad \textrm{ and } \qquad 
	C^{\rm EFGM}_p = \Pi \textrm{ (also see \cite{spanhel2016})},
$$  
and hence $\psi(C^{\rm EFGM})= \Pi \neq C^{\rm EFGM}$.

\item
  The copula $C^{\rm Cube} \in \mathcal{F}^3_\Pi$ introduced in Example \ref{SVC.PInd}
is non-simplified,
satisfies 
$$
  C^{\rm Cube}_{12} = C^{\rm Cube}_{13} = C^{\rm Cube}_{23} = \Pi
	\qquad \textrm{ and } \qquad 
	C^{\rm Cube}_p = \Pi,
$$
and hence $\psi(C^{\rm Cube})= \Pi \neq C^{\rm Cube}$.

\item
  The copula $C^{\rm RCube} \in \mathcal{F}^3_\Pi$ whose mass is distributed uniformly within the cubes
\begin{eqnarray*}
  \big(0,\tfrac{1}{2}\big) \times \big(\tfrac{1}{2},1\big) \times \big(0,\tfrac{1}{2}\big)
	& \quad & \big(\tfrac{1}{2},1\big) \times \big(0,\tfrac{1}{2}\big) \times \big(0,\tfrac{1}{2}\big)
	\\*
	\big(0,\tfrac{1}{2}\big) \times \big(0,\tfrac{1}{2}\big) \times \big(\tfrac{1}{2},1\big)
	& \quad & \big(\tfrac{1}{2},1\big) \times \big(\tfrac{1}{2},1\big) \times \big(\tfrac{1}{2},1\big)
\end{eqnarray*}
and has no mass outside these cubes is non-simplified,
satisfies 
$$
  C^{\rm RCube}_{12} = C^{\rm RCube}_{13} = C^{\rm RCube}_{23} = \Pi
	\qquad \textrm{ and } \qquad 
	C^{\rm RCube}_p = \Pi,
$$
and hence $\psi(C^{\rm RCube})= \Pi \neq C^{\rm RCube}$.
\end{onelist}
  The copula in (3) is denoted as 'RCube' since it is a reflected version of the copula in (2); 
  both are related to each other via 
$ \mu_{C^{\rm RCube}} = (\mu_{C^{\rm Cube}})^T $
where $T: \I^2 \times \I \to \I^2 \times \I$ is the mapping given by $T(\uuu,v) := (\uuu,1-v)$ and $(\mu_{C^{\rm Cube}})^T$
denotes the push-forward of $\mu_{C^{\rm Cube}}$ via $T$.
\end{example}{}

PVCs have been used in \cite{kurz2017} to test the simplifying assumption in vine copula models and in \cite{nagler2016}
 to construct a non-parametric estimator for multivariate distributions. 
 In \cite{spanhel2019} the authors showed that \grqq{}under regularity conditions, stepwise estimators of pair-copula constructions converge to the PVC irrespective of whether the simplifying assumption holds or not" 
 (see \cite[Corollary 6.1]{spanhel2019}). 
  Nevertheless, this does not need to be true if the estimation is done jointly in a non-simplified setting 
  (see \cite[Corollary 6.1]{spanhel2019}).
  The authors further proved that \grqq{}if one sequentially minimizes the Kullback-Leibler divergence 
  related to each tree then the optimal SVC is the PVC" (see \cite[Theorem 5.1]{spanhel2019}). 
  Since, again, this is not necessarily true if the estimation is done jointly in a non-simplified setting 
  (see \cite[Theorem 5.2]{spanhel2019}) the authors conclude that PVCs \grqq{}may not be the best 
  approximation in the space of SVCs" but are \grqq{}often the best feasible SVC approximation in practice."
	
  Motivated by these results in what follows we discuss analytic properties and 
  optimality of simplified pair-copula constructions 
  and focus mainly on partial vine copulas. In Section \ref{Section.PVC.Opt.} we calculate the 
  $d_\infty$-distance between non-simplified copulas and their unique partial vine copulas for different dependence 
  structures, in Section \ref{Section.PVC.Cont.} we discuss continuity of $\psi$ with respect to different notions of convergence.


\section{Optimality of partial vine copulas}\label{Section.PVC.Opt.}
\label{Sect.Partial.Opt.}

Main objective of this section is to provide an answer to the question 
\grqq{}how far off can we be by assuming a simplified model?" posed by \citet{nagler2016}. We proceed as follows:
We first show that partial vine copulas are never the best simplified copula approximation 
(with respect to $d_\infty$) if the true copula is non-simplified (Theorem \ref{PVC.NonOpt}).
  We then compare non-simplified copulas $C$ with their unique partial vine copulas $\psi(C)$ in different settings and calculate their $d_\infty$-distance. 
	It turns out that the maximal distance within the family of all copulas with pairwise independent marginals
  is $1/8$ which corresponds to $50 \%$ of the diameter of this class w.r.t. $d_\infty$.
  Going even further, we provide an example of a copula $C \in \mathcal{C}^3$ fulfilling $d_\infty(C,\psi(C)) =3/16$ 
  which, in turn, corresponds to $28.125 \%$ of the diameter of ($\mathcal{C}^3,d_\infty)$. 
  In other words, $\psi(C)$ can be far away from $C$, so working with PVCs must be done with care. 

  Corollary \ref{S.Dense} implies that if $C$ does not fulfill the simplifying assumption  
  then the partial vine copula fails to be optimal with respect to $d_\infty$: 

\begin{theorem}{} \label{PVC.NonOpt}
  Suppose that $C \in \CC^3_{\rm c}$ is non-simplified.
  Then there exists some simplified copula $D \in \CC^3_{\rm S}$ 
satisfying $ d_\infty (C,D) 	< d_\infty (C,\psi(C)) $. 
\end{theorem}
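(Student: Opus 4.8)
The plan is to derive the result directly from the denseness of simplified copulas established in Corollary \ref{S.Dense}, using the elementary but decisive observation that $\psi(C)$ is itself simplified and sits at strictly positive $d_\infty$-distance from $C$ whenever $C$ is non-simplified. No construction of a specific competitor $D$ is needed; denseness alone produces one.

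First I would show that $d_\infty(C,\psi(C)) > 0$. By the very definition of $\psi$ (and as remarked immediately after it), the partial vine copula $\psi(C)$ always lies in $\CC^3_{\rm S}$. If we had $\psi(C) = C$, then $C$ would be simplified, contradicting the hypothesis that $C$ is non-simplified. Hence $C \neq \psi(C)$, and since $d_\infty$ is a metric we obtain $r := d_\infty(C,\psi(C)) > 0$.

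Next I would invoke Corollary \ref{S.Dense}, which states that $\CC^3_{\rm S}$ is dense in $(\CC^3, d_\infty)$. Applying this with the open ball of radius $r$ centered at $C$, there exists a simplified copula $D \in \CC^3_{\rm S}$ with
$$
  d_\infty(C,D) < r = d_\infty(C,\psi(C)),
$$
which is exactly the asserted strict inequality. This finishes the argument.

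The only step demanding care is the first one, and it is conceptual rather than computational: one must be precise that \emph{non-simplified} means $C \notin \CC^3_{\rm S}$, while $\psi(C) \in \CC^3_{\rm S}$, so the two copulas cannot coincide. I expect no genuine obstacle beyond this, since the heavy lifting is already contained in Corollary \ref{S.Dense}. It is worth emphasizing that this argument yields only a \emph{strict} (qualitative) improvement and gives no control on the size of the gap $d_\infty(C,\psi(C)) - d_\infty(C,D)$; quantifying $d_\infty(C,\psi(C))$ in concrete dependence structures is precisely the task taken up in the remainder of this section.
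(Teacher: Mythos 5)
Your proposal is correct and follows essentially the same route as the paper: the authors likewise observe that a non-simplified $C$ satisfies $C \neq \psi(C)$ (since $\psi(C) \in \CC^3_{\rm S}$), set $\varepsilon := d_\infty(C,\psi(C)) > 0$, and invoke the denseness of $\CC^3_{\rm S}$ in $(\CC^3,d_\infty)$ from Corollary \ref{S.Dense} to produce a strictly better simplified approximant $D$. Your added remark that the argument is purely qualitative and gives no quantitative control on the gap is accurate and consistent with the paper's subsequent worst-case analysis.
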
{}

\begin{proof}{}
  Considering $C \in \CC^3_{\rm c} \backslash \CC^3_{\rm S}$ we have $C \neq \psi(C)$, so setting 
  $0 < d_\infty (C,\psi(C)) =: \varepsilon$ and using Corollary \ref{S.Dense} yields the desired result.
\end{proof}{}

As next step we calculate 
$$
\sup_{C \in \mathcal{F}^3_\Pi} d_\infty(C,\psi(C)),
$$
show that the supremum is attained and then characterize all elements in $\mathcal{F}^3_\Pi$ attaining the maximum.
Afterwards we provide a lower bound for 
$$
\sup_{C \in \mathcal{C}^3_c} d_\infty(C,\psi(C)).
$$
The (dis)continuity results in Section 6 will make it clear why we can not simply use compactness of $(\mathcal{C}^3,d_\infty)$
to conclude that the supremum in the last expression is attained.


\subsection{Worst case scenario for the class $\mathcal{F}^3_\Pi$}

The following theorem holds - notice that the set of maximizers includes the two copulas $C^{\rm Cube}$ and $C^{\rm RCube}$ introduced in Examples \ref{SVC.PInd} and \ref{PVC.PInd.Ex2}:

\begin{theorem}{} \label{PVC.PInd.Dist.}
For every copula $C \in \mathcal{F}^3_\Pi$ the inequality $d_\infty \big(C,\psi(C)\big) \leq \frac{1}{8}$ holds. 
Moreover, for every $C \in \mathcal{F}^3_\Pi$ the following two conditions are equivalent: 
\begin{aaalist}
\item 
$d_\infty \big(C,\psi(C)\big) = \frac{1}{8}$.

\item
$C$ satisfies either
\begin{eqnarray*}
  \mu_C \big[ \big(0,\tfrac{1}{2} \big) \times \big(0,\tfrac{1}{2}\big) \times \big(0,\tfrac{1}{2}\big) \big] 
	&=& \tfrac{1}{4} =
	\mu_C \big[ \big(0,\tfrac{1}{2}\big) \times \big(\tfrac{1}{2},1\big) \times \big(\tfrac{1}{2},1\big) \big]
	\\*
	\mu_C \big[ \big(\tfrac{1}{2},1\big) \times \big(\tfrac{1}{2},1\big) \times \big(0,\tfrac{1}{2}\big) \big]
	&=& \tfrac{1}{4} =
	\mu_C \big[ \big(\tfrac{1}{2},1\big) \times \big(0,\tfrac{1}{2}\big) \times \big(\tfrac{1}{2},1\big) \big]
\end{eqnarray*}
or
\begin{eqnarray*}
  \mu_C \big[ \big(0,\tfrac{1}{2}\big) \times \big(0,\tfrac{1}{2}\big) \times \big(\tfrac{1}{2},1\big) \big]
	&=& \tfrac{1}{4} = 
  \mu_C \big[ \big(0,\tfrac{1}{2}\big) \times \big(\tfrac{1}{2},1\big) \times \big(0,\tfrac{1}{2}\big) \big]
	  \\
  \mu_C \big[ \big(\tfrac{1}{2},1\big) \times \big(\tfrac{1}{2},1\big) \times \big(\tfrac{1}{2},1\big) \big]
	&=& \tfrac{1}{4} = 
		\mu_C \big[ \big(\tfrac{1}{2},1\big) \times \big(0,\tfrac{1}{2}\big) \times \big(0,\tfrac{1}{2}\big) \big].
\end{eqnarray*}
\end{aaalist}
\end{theorem}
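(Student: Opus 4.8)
The plan is to reduce the whole statement to a one--dimensional extremal problem. Fix $C \in \mathcal{F}^3_\Pi$ and recall from Example \ref{PVC.PInd.Ex1} that $F_{1|3}(u_1|t)=u_1$, $F_{2|3}(u_2|t)=u_2$ and $(\psi(C))(\uuu,v)=C_{12}(\uuu)\,v$ hold for almost all $t$. Writing $\phi_\uuu(t):=C_{12;3}^t(\uuu)$, Equation (\ref{Eq.Cop.1}) then gives
$$
  C(\uuu,v)-(\psi(C))(\uuu,v) = \int_0^v \phi_\uuu(t)\,\mathrm{d}\lambda(t) - v\int_0^1 \phi_\uuu(t)\,\mathrm{d}\lambda(t)
$$
for every $(\uuu,v)\in\I^2\times\I$. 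Rewriting the right--hand side as $(1-v)\int_0^v\phi_\uuu - v\int_v^1\phi_\uuu$ and using the Fr\'echet--Hoeffding bounds $W(\uuu)\le\phi_\uuu(t)\le M(\uuu)$ (valid because each $C_{12;3}^t$ is a copula) I obtain the sharp pointwise estimate
$$
  \big|C(\uuu,v)-(\psi(C))(\uuu,v)\big| \le v(1-v)\,\big(M(\uuu)-W(\uuu)\big).
$$

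To prove $d_\infty(C,\psi(C))\le\tfrac18$ I would combine this with the two scalar facts $v(1-v)\le\tfrac14$ (equality only at $v=\tfrac12$) and $M(\uuu)-W(\uuu)=\min(u_1,u_2,1-u_1,1-u_2)\le\tfrac12$ (equality only at $\uuu=(\tfrac12,\tfrac12)$); their product is bounded by $\tfrac18$, and maximising over $\I^2\times\I$ yields the first assertion.

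For the equivalence I would argue as follows. (a) $\Rightarrow$ (b): since $(\CC^3,d_\infty)$ is compact the maximum defining $d_\infty(C,\psi(C))$ is attained at some $(\uuu_0,v_0)$; if it equals $\tfrac18$ then the product structure of the pointwise bound forces both factors to be maximal, so $v_0=\tfrac12$ and $\uuu_0=(\tfrac12,\tfrac12)$ by the uniqueness just noted. Abbreviating $\phi:=\phi_{(1/2,1/2)}\in[0,\tfrac12]$, the difference at this point equals $\tfrac12\big(\int_0^{1/2}\phi-\int_{1/2}^1\phi\big)$, and attaining $+\tfrac18$ (resp.\ $-\tfrac18$) is possible only if $\phi=\tfrac12\,\mathds{1}_{(0,1/2)}$ (resp.\ $\phi=\tfrac12\,\mathds{1}_{(1/2,1)}$) $\lambda$--almost everywhere. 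Disintegration together with the inclusion--exclusion identities $\mu_{C_{12;3}^t}\big((0,\tfrac12)^2\big)=\phi(t)=\mu_{C_{12;3}^t}\big((\tfrac12,1)^2\big)$ and $\mu_{C_{12;3}^t}\big((0,\tfrac12)\times(\tfrac12,1)\big)=\tfrac12-\phi(t)$ then translate these two step profiles precisely into the two cube--mass systems of (b). The reverse implication (b) $\Rightarrow$ (a) runs backwards: the same identities show that each cube--mass system forces $\phi$ to be the corresponding step function, whence the difference at $((\tfrac12,\tfrac12),\tfrac12)$ equals $\pm\tfrac18$, which the upper bound makes maximal.

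The main obstacle I anticipate is pinning down that both conditions are governed by the single scalar profile $\phi(t)=C_{12;3}^t(\tfrac12,\tfrac12)$, i.e.\ that the characterisation depends on the conditional copulas only through their value at the centre $(\tfrac12,\tfrac12)$; the sharpness of the pointwise bound (the extremal $\phi$ being a bang--bang step function) is the technical heart. Some care is also required to check the inclusion--exclusion identities $\lambda$--almost everywhere in $t$ and to confirm that possible boundary atoms of the conditional copulas do not affect the open--cube masses, but the remaining computations are routine once the extremal profile of $\phi$ has been identified.
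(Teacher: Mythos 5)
Your proof is correct and follows essentially the same route as the paper's: your identity $C(\uuu,v)-(\psi(C))(\uuu,v)=(1-v)\int_0^v\phi_\uuu\,\mathrm{d}\lambda - v\int_v^1\phi_\uuu\,\mathrm{d}\lambda$ is exactly the paper's $v(1-v)(k-l)$, the Fr\'echet--Hoeffding bounds give the same factorized estimate $v(1-v)\big(M(\uuu)-W(\uuu)\big)\le\tfrac18$, and the equality analysis is forced to the centre point $\big(\tfrac{\bf 1}{\bf 2},\tfrac12\big)$ with the bang--bang profile of $\phi$, which translates into the two cube--mass systems just as in the paper (which bookkeeps via the slab constraints from $C_{13}=\Pi=C_{23}$ rather than your inclusion--exclusion identities). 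Two cosmetic remarks: attainment of the maximum follows from continuity of $C-\psi(C)$ on the compact cube $\I^3$, not from compactness of $(\CC^3,d_\infty)$; and the absence of $\mu_C$-mass on the planes $u_i=\tfrac12$, needed to equate open-cube masses with values of $C$, follows from the uniform univariate marginals.
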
{}

\begin{proof}{}
  Consider $C \in \mathcal{F}^3_\Pi$, fix $(\uuu,v) \in \I^2 \times (0,1)$ and set 
$$
  k
	:= \frac{1}{v} \; \int\limits_{[0,v]} C_{12;3}^t (\uuu) \; \mathrm{d} \lambda(t)
	\qquad \textrm{ and } \qquad
	l
	:= \frac{1}{1-v} \; \int\limits_{[v,1]} C_{12;3}^t (\uuu) \; \mathrm{d} \lambda(t).
$$
Then
$$
  C_{12} (\uuu) 
	= \int\limits_{\I} C_{12;3}^t (\uuu) \; \mathrm{d} \lambda(t)
	= k \, v + l \, (1-v)
$$
and
$$ 
  C(\uuu,v)
	= \int\limits_{[0,v]} C_{12;3}^t \big( F_{1|3}(u_1|t), F_{2|3}(u_2|t) \big) \; \mathrm{d} \lambda(t)
	= \int\limits_{[0,v]} C_{12;3}^t (\uuu) \; \mathrm{d} \lambda(t)	= k \, v 
$$
Having this and using Example \ref{PVC.PInd.Ex1} yields
$$	C(\uuu,v) - (\psi(C)) (\uuu,v)
	= k \, v - C_{12} (\uuu) \, v
	= k \, v - \big( k \, v + l \, (1-v) \big) \, v
	= v \, (1-v) \, (k - l).
$$
Since $W (\uuu) \leq k \leq M (\uuu)$ as well as $W (\uuu) \leq l \leq M (\uuu)$ we further have
$$
  \big| C(\uuu,v) - (\psi(C)) (\uuu,v) \big|
	= v \, (1-v) \, |k - l|
	\leq v \, (1-v) \, d_\infty (M,W)
	\leq \frac{1}{8}
$$
Considering  
$ d_\infty \big(C^{\rm Cube},\psi(C^{\rm Cube})\big)
	= C^{\rm Cube} \big( \tfrac{\bf 1}{\bf 2} \big) - \Pi \big( \tfrac{\bf 1}{\bf 2} \big)
	= \tfrac{1}{4} - \tfrac{1}{8}
	= \tfrac{1}{8} $
we finally obtain
$$
  \frac{1}{8}
    = d_\infty \big(C^{\rm Cube},\psi(C^{\rm Cube})\big)
  \leq \sup_{C \in \mathcal{F}_\Pi} d_\infty \big(C,\psi(C)\big) 
	\leq \frac{1}{8}
$$
which proves the first assertion. 

For proving the stated equivalence we proceed as follows: First suppose that (b) holds.
Considering that for $\uuu=\tfrac{\bf 1}{\bf 2}$ and $v = \tfrac{1}{2}$ we have  
$$
  |k - l|
	= \big| 2 \; \mu_C \big[ \big(0,\tfrac{1}{2} \big) \times \big(0,\tfrac{1}{2}\big) \times \big(0,\tfrac{1}{2}\big) \big] - 2 \; \mu_C \big[ \big(0,\tfrac{1}{2} \big) \times \big(0,\tfrac{1}{2}\big) \times \big(\tfrac{1}{2},1\big) \big] \big|
	= \frac{2}{4} 
$$
it follows that 
$$
  \frac{1}{8}
  \geq d_\infty \big(C,\psi(C)\big)
  \geq \big| C\big( \tfrac{\bf 1}{\bf 2} \big) - (\psi(C)) \big( \tfrac{\bf 1}{\bf 2} \big) \big|
	  =  \frac{1}{4} \; \big| k - l \big|
	  =  \frac{1}{4} \; \frac{2}{4} 
	  =  \frac{1}{8} 
$$
so (a) holds and it remains to show that (a) implies (b). First of all notice that 
$$
  \frac{1}{8}
	= d_\infty \big(C,\psi(C)\big)
	= \sup_{(\uuu,v) \in \I^2 \times \I} \big| v \, (1-v) \, (k - l)\big|
$$
and that it is straightforward to show that $|k - l|$ is at most $1/2$ and that $1/2$ can only be attained 
by choosing $u_1 = 1/2 = u_2$ (irrespective of the value of $v$).
In this case either $k=1/2$ and $l=0$ or $k=0$ and $l=1/2$.
Thus, 
$$
  \frac{1}{8}
	  =  d_\infty \big(C,\psi(C)\big)
	\leq \; \sup_{v \in \I} \big| v \, (1-v) \big| \; \cdot 
			 \sup_{(\uuu,v) \in \I^2 \times \I} \big| k - l \big|
	  =  \frac{1}{2} \; \sup_{v \in \I} \big| v \, (1-v) \big| 
	  =  \frac{1}{8}		 
$$
and $v=1/2$. From the first part of this proof we get 
$$
  \mu_C \big[ \big( 0, \tfrac{1}{2} \big)^3 \big]
	 =  C \big( \tfrac{\bf 1}{\bf 2}, \tfrac{1}{2} \big)
	 =  k \, \tfrac{1}{2}
	\in \big\{ 0, \tfrac{1}{4} \big\}
$$
as well as 
\begin{eqnarray*}
	\mu_C \big[ \big( 0, \tfrac{1}{2} \big)^3 \big] 
	  + \mu_C \big[ \big(0,\tfrac{1}{2}\big) \times \big(0,\tfrac{1}{2}\big) \times \big(\tfrac{1}{2},1\big) \big] 
	& = & \mu_C \big[ \big(0,\tfrac{1}{2}\big) \times \big(0,\tfrac{1}{2}\big) \times \I \big] 
	\\
	& = & C_{12} \big( \tfrac{\bf 1}{\bf 2} \big) 
	\\
	& = & k \, v + l \, (1-v)
	\\
	& = & \tfrac{1}{4}
\end{eqnarray*}
Since for every $C \in \mathcal{F}^3_\Pi$ we have 
\begin{eqnarray*}
  \mu_C \big[ \big(0,\tfrac{1}{2}\big) \times \I \times \big(0,\tfrac{1}{2}\big) \big]
	= \tfrac{1}{4}		
	&& 
	\mu_C \big[ \I \times \big(0,\tfrac{1}{2}\big) \times \big(0,\tfrac{1}{2}\big) \big]
	= \tfrac{1}{4}
	\\*
	\mu_C \big[ \big(0,\tfrac{1}{2}\big) \times \I \times \big(\tfrac{1}{2},1\big) \big]
	= \tfrac{1}{4}	
	&& \mu_C \big[ \I \times \big(0,\tfrac{1}{2}\big) \times \big(\tfrac{1}{2},1\big) \big]
	= \tfrac{1}{4}	
	\\*
	\mu_C \big[ \big(\tfrac{1}{2},1\big) \times \I \times \big(0,\tfrac{1}{2}\big) \big]
	= \tfrac{1}{4}		
	&& \mu_C \big[ \I \times \big(\tfrac{1}{2},1\big) \times \big(0,\tfrac{1}{2}\big) \big]
	= \tfrac{1}{4}
	\\*
	\mu_C \big[ \big(\tfrac{1}{2},1\big) \times \I \times \big(\tfrac{1}{2},1\big) \big]
	= \tfrac{1}{4}		
	&& \mu_C \big[ \I \times \big(\tfrac{1}{2},1\big) \times \big(\tfrac{1}{2},1\big) \big]
	= \tfrac{1}{4}
\end{eqnarray*}
it suffices to distinguish the following two situations: \\
(i) If $\mu_C \big[ \big( 0, \tfrac{1}{2} \big)^3 \big] = \tfrac{1}{4}$ then 
$\mu_C \big[ \big(0,\tfrac{1}{2}\big) \times \big(0,\tfrac{1}{2}\big) \times \big(\tfrac{1}{2},1\big) \big]
	= 0$
and $C_{13} = \Pi = C_{23}$ yields
\begin{eqnarray*}
  \mu_C \big[ \big(0,\tfrac{1}{2}\big) \times \big(\tfrac{1}{2},1\big) \times \big(0,\tfrac{1}{2}\big) \big]
	= 0
	&&
	\mu_C \big[ \big(0,\tfrac{1}{2}\big) \times \big(\tfrac{1}{2},1\big) \times \big(\tfrac{1}{2},1\big) \big]
	= \tfrac{1}{4}
	\\
	\mu_C \big[ \big(\tfrac{1}{2},1\big) \times \big(0,\tfrac{1}{2}\big) \times \big(0,\tfrac{1}{2}\big) \big]
	= 0
	&&
	\mu_C \big[ \big(\tfrac{1}{2},1\big) \times \big(\tfrac{1}{2},1\big) \times \big(0,\tfrac{1}{2}\big) \big]
	= \tfrac{1}{4}
	\\
	\mu_C \big[ \big(\tfrac{1}{2},1\big) \times \big(\tfrac{1}{2},1\big) \times \big(\tfrac{1}{2},1\big) \big]
	= 0
	&&
	\mu_C \big[ \big(\tfrac{1}{2},1\big) \times \big(0,\tfrac{1}{2}\big) \times \big(\tfrac{1}{2},1\big) \big]
	= \tfrac{1}{4}
\end{eqnarray*}
(ii) If $\mu_C \big[ \big( 0, \tfrac{1}{2} \big)^3 \big] = 0$,
then 
$\mu_C \big[ \big(0,\tfrac{1}{2}\big) \times \big(0,\tfrac{1}{2}\big) \times \big(\tfrac{1}{2},1\big) \big]
	= \tfrac{1}{4}$
and $C_{13} = \Pi = C_{23}$ yields
\begin{eqnarray*}
  \mu_C \big[ \big(0,\tfrac{1}{2}\big) \times \big(\tfrac{1}{2},1\big) \times \big(0,\tfrac{1}{2}\big) \big]
	= \tfrac{1}{4}
	&&
	\mu_C \big[ \big(0,\tfrac{1}{2}\big) \times \big(\tfrac{1}{2},1\big) \times \big(\tfrac{1}{2},1\big) \big]
	= 0
	\\
	\mu_C \big[ \big(\tfrac{1}{2},1\big) \times \big(0,\tfrac{1}{2}\big) \times \big(0,\tfrac{1}{2}\big) \big]
	= \tfrac{1}{4}
	&&
	\mu_C \big[ \big(\tfrac{1}{2},1\big) \times \big(\tfrac{1}{2},1\big) \times \big(0,\tfrac{1}{2}\big) \big]
	= 0
	\\
	\mu_C \big[ \big(\tfrac{1}{2},1\big) \times \big(\tfrac{1}{2},1\big) \times \big(\tfrac{1}{2},1\big) \big]
	= \tfrac{1}{4}
	&&
	\mu_C \big[ \big(\tfrac{1}{2},1\big) \times \big(0,\tfrac{1}{2}\big) \times \big(\tfrac{1}{2},1\big) \big]
	= 0,
\end{eqnarray*}
which completes the proof.
\end{proof}{}

Notice that Theorem \ref{PVC.PInd.Dist.} implies the following striking property: 
The maximal distance of a copula $C$ with pairwise independent marginals and its partial vine copula $\psi(C)$ corresponds to  
\begin{hylist}
\item
$50 \%$ of the diameter of the metric space of all copulas with pairwise independent marginals w.r.t. $d_\infty$;
the diameter of this class equals $1/4$ which can be calculated via \cite[Section 3.3]{nelsen2012}.
\item
$18.75 \%$ of the diameter of $(\mathcal{C}^3, d_\infty)$, which is given by $2/3$.
\end{hylist}

\begin{remark}
An equally striking result can be shown for the metric $D_1$: Again working with $C^{\rm Cube}$ it follows that 
$$
\sup_{C \in \mathcal{F}^3_\Pi} D_1(C,\psi(C)) \geq \frac{15}{64} = D_1(C^{\rm Cube},\psi(C^{\rm Cube}))
$$
holds. Using the results in \cite{trutschnig2015}
we therefore get that the maximal $D_1$-distance of a copula $C \in \mathcal{F}^3_\Pi$ and its partial vine copula 
$\psi(C)$ is greater than or equal to 42.1875\% of the diameter of the metric space $(\mathcal{C}^3,D_1)$;
the diameter of this class is at most $5/9$ which can be calculated via \cite[Lemma 2]{trutschnig2015}.
\end{remark}

\begin{remark}{} \label{rem.cond.}
At this point it is worth to mention that $C^{\rm Cube}$ is exchangeable and hence 
approximating $C^{\rm Cube}$ by $\psi(C^{\rm Cube})$ leads to equally poor results no matter which coordinate 
is chosen for the conditioning.
\end{remark}{}


\subsection{Worst case scenario for the full class $\mathcal{C}^3_c$}

We are now going to show that the maximal $d_\infty$-distance of a copula $C \in \mathcal{C}^3_c$ 
and its assigned partial vine copula $\psi(C)$ is at least $3/16$ which corresponds to $28.125 \%$ of 
the diameter of the metric space $(\mathcal{C}^3_c,d_\infty)$.  

\begin{example}{} \label{PVC.PConst.Ex} 
Consider the intervals $I_i := \big( \tfrac{i-1}{4}, \tfrac{i}{4} \big)$ for $i \in \{1,\dots,4\}$.
We use Equation (\ref{Eq.Cop.1}) in order to construct a three-dimensional non-simplified copula $C$
satisfying that its conditional copulas 
$C_{12;3}^t$, $t \in \I$, 
are identical for all $t$ within each of the four subintervals. To this end, set 
$$
  A^t := \sum_{i=1}^4 D^i \mathds{1}_{I_i}(t) + \Pi \, \mathds{1}_{\{0,\frac{1}{4},\frac{1}{2},\frac{3}{4},1\}}(t)
$$
where the bivariate copulas $D^1, \dots, D^4$ are the shuffles of $W$ depicted in Figure \ref{fig.cube.1} 
(for the definition of shuffles we refer to \cite[Definition 2.1]{durantefs2010} and \cite[Section 5]{fuchs2018}).
\\
\begin{figure}[h]
\centering
\begin{tikzpicture}[xscale=0.55,yscale=0.55] 
\node (AA) at (-0.3,4.) {$1$};
\node (BA) at (-0.3,0.) {$0$};
\node (CA) at (4.3,0.) {$1$};

\node (AB) at (5.7,4.) {$1$};
\node (BB) at (5.7,0.) {$0$};
\node (CB) at (10.3,0.) {$1$};

\node (AC) at (11.7,4.) {$1$};
\node (BC) at (11.7,0.) {$0$};
\node (CC) at (16.3,0.) {$1$};

\node (AD) at (17.7,4.) {$1$};
\node (BD) at (17.7,0.) {$0$};
\node (CD) at (22.3,0.) {$1$};

\draw [thin]  (0,0) -- (0,4);     \draw [thin]  (6,0) -- (6,4); 
\draw [thin]  (1,0) -- (1,4);     \draw [thin]  (7,0) -- (7,4); 
\draw [thin]  (2,0) -- (2,4);     \draw [thin]  (8,0) -- (8,4); 
\draw [thin]  (3,0) -- (3,4);     \draw [thin]  (9,0) -- (9,4); 
\draw [thin]  (4,0) -- (4,4);     \draw [thin]  (10,0) -- (10,4); 
\draw [thin]  (0,0) -- (4,0);     \draw [thin]  (6,0) -- (10,0); 
\draw [thin]  (0,1) -- (4,1);     \draw [thin]  (6,1) -- (10,1); 
\draw [thin]  (0,2) -- (4,2);     \draw [thin]  (6,2) -- (10,2); 
\draw [thin]  (0,3) -- (4,3);     \draw [thin]  (6,3) -- (10,3); 
\draw [thin]  (0,4) -- (4,4);     \draw [thin]  (6,4) -- (10,4); 
\draw [thick] (0,2) -- (1,1);     \draw [thick] (6,4) -- (7,3); 
\draw [thick] (1,4) -- (3,2);     \draw [thick] (7,1) -- (8,0); 
\draw [thick] (3,1) -- (4,0);     \draw [thick] (8,3) -- (10,1);    

\draw [thin]  (12,0) -- (12,4);     \draw [thin]  (18,0) -- (18,4); 
\draw [thin]  (13,0) -- (13,4);     \draw [thin]  (19,0) -- (19,4); 
\draw [thin]  (14,0) -- (14,4);     \draw [thin]  (20,0) -- (20,4); 
\draw [thin]  (15,0) -- (15,4);     \draw [thin]  (21,0) -- (21,4); 
\draw [thin]  (16,0) -- (16,4);     \draw [thin]  (22,0) -- (22,4); 
\draw [thin]  (12,0) -- (16,0);     \draw [thin]  (18,0) -- (22,0); 
\draw [thin]  (12,1) -- (16,1);     \draw [thin]  (18,1) -- (22,1); 
\draw [thin]  (12,2) -- (16,2);     \draw [thin]  (18,2) -- (22,2); 
\draw [thin]  (12,3) -- (16,3);     \draw [thin]  (18,3) -- (22,3); 
\draw [thin]  (12,4) -- (16,4);     \draw [thin]  (18,4) -- (22,4); 
\draw [thick] (12,3) -- (14,1);     \draw [thick] (18,4) -- (19,3); 
\draw [thick] (14,4) -- (15,3);     \draw [thick] (19,2) -- (21,0); 
\draw [thick] (15,1) -- (16,0);     \draw [thick] (21,3) -- (22,2);                                
\end{tikzpicture}
\caption{Shuffles $D^1, D^2, D^3, D^4$ of $W$ as considered in Example \ref{PVC.PConst.Ex}.}
\label{fig.cube.1}
\vspace{-0.3cm}
\end{figure}
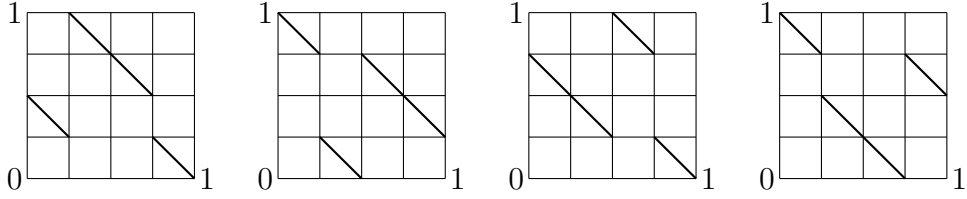
\begin{figure}[h]
\centering
\includegraphics[width=130mm]{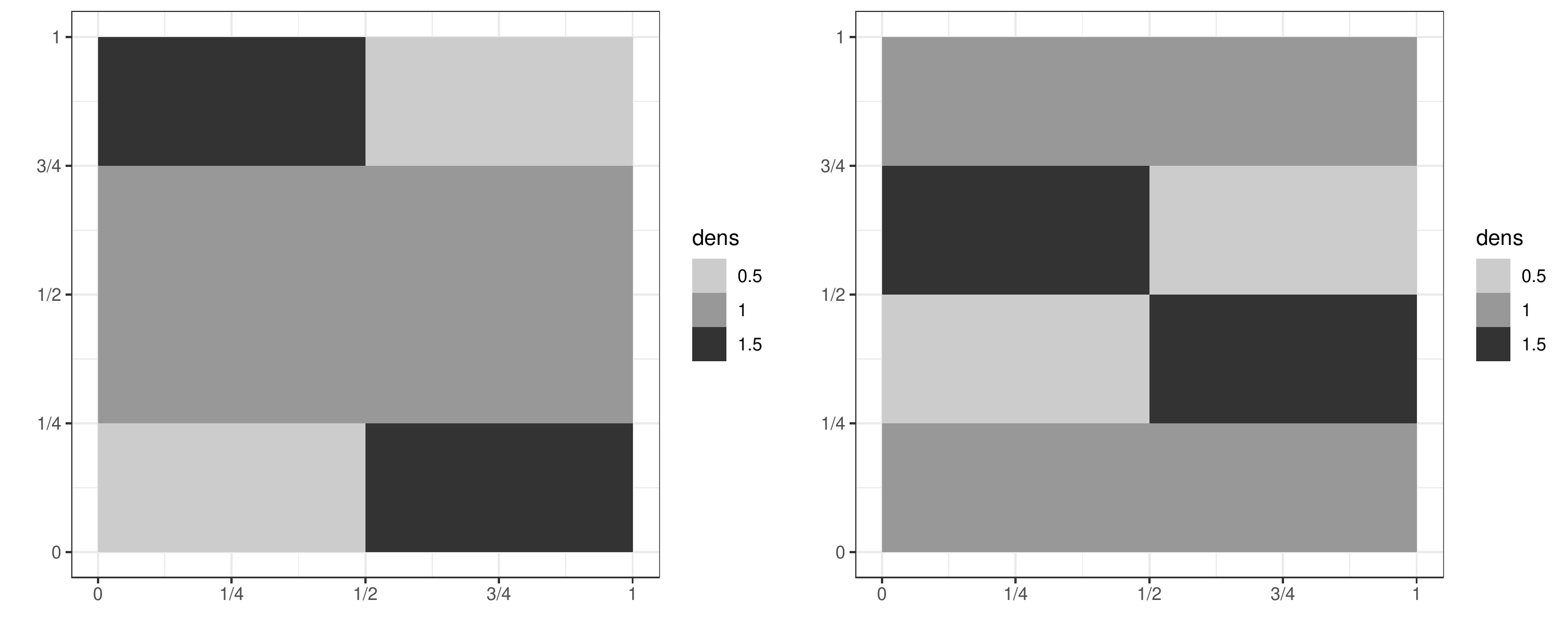}
\caption{Densities of the checkerboard copulas $B^\ast$ (left panel) and $B^{\ast\ast}$ (right panel).}
\label{fig:check}
\vspace{-0.3cm}
\end{figure}
As next step we construct the (conditional) univariate marginal distribution functions $F_{1|3}(.|t)$ and $F_{2|3}(.|t)$ (conditional on $t \in \I$) and proceed as follows: 
Let $B^\ast, B^{\ast\ast}$ denote bivariate checkerboard copulas (see \cite{trutschnig2015} for a definition) 
 whose densities $b^\ast, b^{\ast\ast}: \I^2 \to \R$ are given by
\begin{eqnarray*}
  b^\ast (u_1,t) 
	:= 
	\begin{cases}
	  \tfrac{1}{2} & (u_1,t) \in \big(0, \tfrac{1}{2}\big) \times I_1 \cup \big(\tfrac{1}{2},1\big) \times I_4
		\\
	  1            & (u_1,t) \in \I \times I_2 \cup \I \times I_3
		\\
	  \tfrac{3}{2} & (u_1,t) \in \big(\tfrac{1}{2},1\big) \times I_1 \cup \big(0,\tfrac{1}{2}\big) \times I_4 \\
	  0 & \textrm{ otherwise }
	\end{cases}
\end{eqnarray*}
and
\begin{eqnarray*}
	b^{\ast\ast} (u_2,t) 
	:= 
	\begin{cases}
	  \tfrac{1}{2} & (u_2,t) \in \big(0, \tfrac{1}{2}\big) \times I_2 \cup \big(\tfrac{1}{2},1\big) \times I_3
		\\
	  1            & (u_2,t) \in \I \times I_1 \cup \I \times I_4
		\\
	  \tfrac{3}{2} & (u_2,t) \in \big(\tfrac{1}{2},1\big) \times I_2 \cup \big(0,\tfrac{1}{2}\big) \times I_3 \\
	  0 & \textrm{ otherwise }
	\end{cases},
\end{eqnarray*}
respectively (see Figure \ref{fig:check}). Then the Markov kernels of $B^\ast$ and $B^{\ast\ast}$ obviously satisfy
\begin{eqnarray*}
  K_{B^\ast} (t,[0,0.5]) 
	= 
	\begin{cases}
	  \tfrac{1}{4} & t \in I_1
		\\
	  \tfrac{2}{4} & t \in I_2 
		\\
	  \tfrac{2}{4} & t \in I_3
		\\
	  \tfrac{3}{4} & t \in I_4
	\end{cases}
  \quad \textrm{ and } \quad
	K_{B^{\ast\ast}} (t,[0,0.5]) 
	= 
	\begin{cases}
	  \tfrac{2}{4} & t \in I_1
		\\
	  \tfrac{1}{4} & t \in I_2 
		\\
	  \tfrac{3}{4} & t \in I_3
		\\
		\tfrac{2}{4} & t \in I_4.
	\end{cases}
\end{eqnarray*}
 Completing the construction of $C$ we use the copulas $A^t$, $t \in \I$, as conditional copulas and 
 the Markov kernels $K_{B^\ast}$ and $K_{B^{\ast\ast}}$ as (conditional) univariate marginal distribution functions,
and set 
\begin{equation} 
  C(\uuu,v)
	:= \int\limits_{[0,v]} A^t \Big( K_{B^\ast} (t,[0,u_1]), K_{B^{\ast\ast}} (t,[0,u_2]) \Big) \;\mathrm{d}\lambda(t).
\end{equation}
 Then $C \in \mathcal{C}^3_c$ is non-simplified,
satisfies $C_{12;3}^t = A^t$ for all $t \in I^1 \cup I^2 \cup I^3 \cup I^4$,
$C_{13} = B^\ast$, $C_{23}=B^{\ast\ast}$, as well as 
\begin{eqnarray*}
  C \big(0.5,0.5,1\big) 
	& = & \int\limits_{\I} A^t \Big( K_{B^\ast} (t,[0,0.5]), K_{B^{\ast\ast}} (t,[0,0.5]) \Big) \;\mathrm{d}\lambda(t)
	\\*
	& = & \frac{1}{4} \; D^1 \left( \frac{1}{4}, \frac{2}{4} \right) 
			  + \frac{1}{4} \; D^2 \left( \frac{2}{4}, \frac{1}{4} \right) 
				+ \frac{1}{4} \; D^3 \left( \frac{2}{4}, \frac{3}{4} \right) 
				+ \frac{1}{4} \; D^4 \left( \frac{3}{4}, \frac{2}{4} \right)
	\\*
	& = & \frac{1}{4} \left(\frac{1}{4} + \frac{1}{4} + \frac{1}{2} + \frac{1}{2}\right) = \frac{3}{8}
\end{eqnarray*}
Considering that the partial copula $C_p$ of $C$ is given by $ C_p = \tfrac{1}{4} \big( D^1 + D^2 + D^3 + D^4 \big) $
the partial vine copula $\psi(C)$ of $C$ satisfies
\begin{eqnarray*}
		\lefteqn{\big(\psi(C)\big) \big(0.5,0.5,1\big)}
		\\
		& = & \int\limits_{\I} C_{p} \Big( K_{B^\ast} (t,[0,0.5]), K_{B^{\ast\ast}} (t,[0,0.5]) \Big) 
		      \;\mathrm{d}\lambda(t)
		\\
		& = & \frac{1}{4} \; C_{p} \left( \frac{1}{4}, \frac{2}{4} \right) 
			  + \frac{1}{4} \; C_{p} \left( \frac{2}{4}, \frac{1}{4} \right) 
				+ \frac{1}{4} \; C_{p} \left( \frac{2}{4}, \frac{3}{4} \right) 
				+ \frac{1}{4} \; C_{p} \left( \frac{3}{4}, \frac{2}{4} \right)
		\\
		& = & \frac{1}{16} \left( 
					D^1\left(\frac{1}{4},\frac{1}{2}\right) 
					+ D^2\left(\frac{1}{4},\frac{1}{2}\right) 
					+ D^3\left(\frac{1}{4},\frac{1}{2}\right) 
					+ D^4\left(\frac{1}{4},\frac{1}{2}\right) \right)
		\\
		&   & + \frac{1}{16} \left( 
					D^1\left(\frac{1}{2},\frac{1}{4}\right) 
					+ D^2\left(\frac{1}{2},\frac{1}{4}\right) 
					+ D^3\left(\frac{1}{2},\frac{1}{4}\right) 
					+ D^4\left(\frac{1}{2},\frac{1}{4}\right)\right)
		\\
		&   & + \frac{1}{16} \left(
					D^1\left(\frac{1}{2},\frac{3}{4}\right) 
					+ D^2\left(\frac{1}{2},\frac{3}{4}\right) 
					+ D^3\left(\frac{1}{2},\frac{3}{4}\right) 
					+ D^4\left(\frac{1}{2},\frac{3}{4}\right)\right)
		\\
		&   & + \frac{1}{16} \left(
					D^1\left(\frac{3}{4},\frac{1}{2}\right) 
					+ D^2\left(\frac{3}{4},\frac{1}{2}\right) 
					+ D^3\left(\frac{3}{4},\frac{1}{2}\right) 
					+ D^4\left(\frac{3}{4},\frac{1}{2}\right)\right)
		\\
		& = & \frac{1}{16} \left(\frac{1}{4} + \frac{1}{4} + \frac{5}{4} + \frac{5}{4}\right) =  \frac{3}{16},
\end{eqnarray*}	
  from which we get $ d_\infty(C, \psi(C)) \geq \tfrac{3}{16}$.
\end{example}{}

We have therefore proved the following theorem:

\begin{theorem}{} \label{PVC.Gen.Dist.}
There exists a copula $C \in \CC^3_{\rm c}$ fulfilling  $d_\infty(C, \psi(C)) \geq \tfrac{3}{16}$ and we have 
$$
  \sup_{C \in \CC^3_{\rm c}} d_\infty \big(C,\psi(C)\big) 	\geq \frac{3}{16}.
$$
\end{theorem}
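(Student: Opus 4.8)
The plan is to prove the theorem by exhibiting a single explicit witness rather than attempting an abstract optimization over all of $\CC^3_{\rm c}$. Since $\sup_{C \in \CC^3_{\rm c}} d_\infty(C,\psi(C))$ dominates the value $d_\infty(C,\psi(C))$ attained by any particular copula, it suffices to produce one copula $C \in \CC^3_{\rm c}$ together with a lower bound $d_\infty(C,\psi(C)) \geq \tfrac{3}{16}$, and both assertions follow at once. The natural candidate is the copula constructed in Example \ref{PVC.PConst.Ex}, whose integral representation via Equation (\ref{Eq.Cop.1}) is engineered so that both $C$ and its partial vine copula $\psi(C)$ are computable in closed form at one well-chosen point.

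The construction exploits the two independent degrees of freedom in (\ref{Eq.Cop.1}): the family of conditional copulas $C_{12;3}^t$ and the conditional univariate marginals $F_{1|3}(\cdot|t)$, $F_{2|3}(\cdot|t)$. First I would let the conditional copulas cycle through four pairwise distinct shuffles $D^1,\dots,D^4$ of $W$ on the four subintervals $I_i=(\tfrac{i-1}{4},\tfrac{i}{4})$; distinctness of the $D^i$ guarantees that $C$ is non-simplified, so $C \in \CC^3_{\rm c} \setminus \CC^3_{\rm S}$ and $\psi(C)$ genuinely differs from $C$. Second I would prescribe the conditional marginals as the Markov kernels of the checkerboard copulas $B^\ast$ and $B^{\ast\ast}$, chosen so that on each $I_i$ the pair $\big(K_{B^\ast}(t,[0,\tfrac12]),K_{B^{\ast\ast}}(t,[0,\tfrac12])\big)$ lands at a specific point of $\I^2$. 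The decisive design choice is to make these evaluation points differ across the four intervals so that the large and small values of the individual shuffles line up constructively rather than cancelling.

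With the construction in place, both evaluations at the point $(0.5,0.5,1)$ reduce to finite sums. On the one hand, evaluating each shuffle $D^i$ at the interval-specific argument pair $\big(K_{B^\ast}(t,[0,\tfrac12]),K_{B^{\ast\ast}}(t,[0,\tfrac12])\big)$ for $t \in I_i$ and averaging yields $C(0.5,0.5,1)=\tfrac38$. On the other hand, since the partial copula is the average $C_p=\tfrac14(D^1+D^2+D^3+D^4)$, the value $(\psi(C))(0.5,0.5,1)$ is obtained by integrating this single averaged copula against the same four kernel pairs, giving $\tfrac{3}{16}$. The lower bound then follows immediately from $d_\infty(C,\psi(C)) \geq |C(0.5,0.5,1)-(\psi(C))(0.5,0.5,1)| = \tfrac38-\tfrac{3}{16}=\tfrac{3}{16}$, and the supremum statement is an instant consequence.

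The step I expect to be the genuine obstacle is the design of the construction rather than the final arithmetic: one must simultaneously arrange the shuffles $D^i$ and the checkerboard kernels so that $C$ remains a bona fide element of $\CC^3_{\rm c}$, so that the nontrivial marginals $C_{13}=B^\ast$ and $C_{23}=B^{\ast\ast}$ push $C$ out of the $\mathcal{F}^3_\Pi$ regime (where Theorem \ref{PVC.PInd.Dist.} caps the distance at $\tfrac18$), and so that averaging the conditional copulas into $C_p$ destroys precisely the structure that made $C(0.5,0.5,1)$ large. Verifying that the interaction between the $t$-dependent marginals and the $t$-dependent conditional copulas amplifies the discrepancy past $\tfrac18$ — that is, checking that the four interval contributions do not wash out under averaging — is where the real work lies; the closed-form values of shuffles of $W$ at dyadic rationals make this verification routine but essential.
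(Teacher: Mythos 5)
Your proposal is correct and follows exactly the paper's own argument: the proof of Theorem \ref{PVC.Gen.Dist.} is precisely the construction in Example \ref{PVC.PConst.Ex}, with the same witness copula built from the four shuffles $D^1,\dots,D^4$ of $W$ and the checkerboard marginals $B^\ast$, $B^{\ast\ast}$, and the same evaluation $C(0.5,0.5,1)=\tfrac{3}{8}$ versus $(\psi(C))(0.5,0.5,1)=\tfrac{3}{16}$ yielding the bound $\tfrac{3}{16}$. Nothing further is needed.
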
{}


\section{Continuity of $\psi$}\label{Section.PVC.Cont.}
\label{Sect.Partial.Cont.}

In this section we discuss continuity properties of the mapping $\psi: \CC^3_{\rm c} \to \CC^3_{\rm c}$ assigning 
every $C \in \mathcal{C}^3_c$ its partial vine copula. Having in mind Lemma \ref{PVC.Lemma1} intuitively 
one might interpret $\psi$ as projection and therefore think that $\psi$ has to be continuous with respect to $d_\infty$. 
It turns out, however, that this interpretation is 
wrong, we will show that $\psi$ is \emph{not} continuous with respect to $d_\infty$. Considering stronger topologies than 
the one induced by $d_\infty$ changes the picture - we will prove that $\psi$ is continuous with respect to 
weak conditional convergence and with respect to the metric $D_1$ (under some mild regularity conditions).  


\subsection{Uniform convergence}

The mapping $\psi$ is not continuous with respect to $d_\infty$ - the following result holds:

\begin{theorem}{} \label{NonCont.dInf.Lemma}\label{NonCont.dInf}
Suppose that $C \in \CC_{\rm c}^3$ satisfies $d_\infty (C,\psi(C)) \neq 0$. Then $C$ is a discontinuity point of the 
the mapping $\psi: \CC^3_{\rm c} \to \CC^3_{\rm c}$. In other words: Every non-simplified $C \in \mathcal{C}_c^3$ is 
a discontinuity point of $\psi$.  
\end{theorem}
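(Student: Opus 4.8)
The plan is to exploit two facts already established: that the family $\CC^3_{\rm S}$ of simplified copulas is dense in $(\CC^3, d_\infty)$ (Corollary \ref{S.Dense}), and that $\psi$ acts as the identity on simplified copulas (Lemma \ref{PVC.Lemma1}(2)). The conceptual content is that $\psi$ coincides with the identity on a \emph{dense} subset of its domain, yet by hypothesis differs from the identity at $C$; this mismatch between a dense set of fixed points and the value $\psi(C)\neq C$ is exactly what forces discontinuity. So the proof amounts to turning this observation into an explicit sequence.

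Concretely, I would fix a non-simplified $C \in \CC^3_{\rm c}$, so that by hypothesis $d_\infty(C, \psi(C)) > 0$, and then use Corollary \ref{S.Dense} to select a sequence $(C_n)_{n\in\N}$ of simplified copulas with $\lim_{n\to\infty} d_\infty(C_n, C) = 0$. Since every simplified copula lies in $\CC^3_{\rm S} = \CC^3_{\rm GS} \cap \CC^3_{\rm c} \subseteq \CC^3_{\rm c}$, each $C_n$ belongs to the domain of $\psi$, so $\psi(C_n)$ is well-defined.

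Next, invoking Lemma \ref{PVC.Lemma1}(2), for each $n$ we have $\psi(C_n) = C_n$, whence $\lim_{n\to\infty} d_\infty(\psi(C_n), C) = \lim_{n\to\infty} d_\infty(C_n, C) = 0$; that is, $\psi(C_n) \to C$ in $d_\infty$. Were $\psi$ continuous at $C$, this would force $\psi(C_n) \to \psi(C)$, and uniqueness of limits in $(\CC^3, d_\infty)$ would yield $C = \psi(C)$, contradicting $d_\infty(C, \psi(C)) > 0$. Hence $\psi$ is not continuous at $C$, i.e., $C$ is a discontinuity point.

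The argument contains no genuine obstacle; the only point requiring a moment's care is that the approximating simplified copulas actually lie in the domain $\CC^3_{\rm c}$ of $\psi$, which holds by the very definition of $\CC^3_{\rm S}$. Finally, the two formulations in the statement coincide: since every partial vine copula is simplified and since simplified copulas satisfy $\psi(C) = C$ by Lemma \ref{PVC.Lemma1}(2), a copula $C \in \CC^3_{\rm c}$ fulfils $d_\infty(C, \psi(C)) \neq 0$ if and only if it is non-simplified, so the hypothesis $d_\infty(C,\psi(C))\neq 0$ and the phrase \enquote{$C$ is non-simplified} describe the same set of copulas.
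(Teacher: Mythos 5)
Your proof is correct and follows essentially the same route as the paper: the paper also approximates $C$ by a $d_\infty$-convergent sequence of simplified copulas on which $\psi$ acts as the identity (it uses empirical copulas explicitly, which is exactly how Corollary \ref{S.Dense} is obtained) and derives the contradiction via the triangle inequality. Your version merely cites Corollary \ref{S.Dense} abstractly instead of re-instantiating the empirical-copula sequence, which is a cosmetic rather than a substantive difference.
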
{}
\begin{proof}{}
  Let $C$ be as in the theorem and set $\varepsilon:= d_\infty (C,\psi(C)) > 0$. Suppose that 
  $\mathbf{X}_1,\mathbf{X}_2, \ldots$ is an i.i.d. sample from $\mathbf{X} \sim C$ and let  
  $\widehat{C}_n$ denote the corresponding empirical copula. With probability one we have that 
  $\mathbf{X}_1,\mathbf{X}_2, \ldots$ has no ties and that $(\widehat{C}_n)_{n \in \mathbb{N}}$ converges to 
  $C$ with respect to $d_\infty$. Considering that empirical copulas are simplified 
  according to Theorem \ref{Emp.Subset.S} and using the triangle inequality it follows immediately that
\begin{eqnarray*}
   \varepsilon
	&   =  & d_\infty (C,\psi(C)) 
	\\
	& \leq & d_\infty \big( C, \psi \big(\widehat{C}_n\big) \big) + d_\infty \big( \psi \big(\widehat{C}_n\big), \psi(C) \big)
  \\
	&   =  & d_\infty \big( C, \widehat{C}_n \big) + d_\infty \big( \psi \big(\widehat{C}_n\big), \psi(C) \big)
\end{eqnarray*}
holds for every $n \in \N$. Consequently, since $\lim_{n \rightarrow \infty} d_\infty(\widehat{C}_n,C)=0$ 
$$
\liminf_{n \rightarrow \infty} d_\infty \big( \psi \big(\widehat{C}_n\big), \psi(C) \big) \geq \varepsilon 
$$
follows, implying that $\psi$ is not continuous at $C$. 
\end{proof}{}

Using convex combinations (of empirical copulas with a non-simplified co\-pula) 
it is straightforward to verify that the set of all $C \in \mathcal{C}^3_c$ that 
are non-simplified is dense in $(\mathcal{C}^3_c,d_\infty)$ - Theorem \ref{NonCont.dInf.Lemma} therefore has the 
following corollary: 
\begin{corollary}{} \label{discont.everywhere}
  The mapping $\psi: \CC^3_{\rm c} \to \CC^3_{\rm c}$ is discontinuous on a dense subset of $(\mathcal{C}^3_c,d_\infty)$. 
\end{corollary}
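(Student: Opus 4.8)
The plan is to obtain Corollary \ref{discont.everywhere} by combining Theorem \ref{NonCont.dInf.Lemma} with the claim announced just before it, namely that the non-simplified copulas form a dense subset of $(\CC^3_{\rm c},d_\infty)$. Theorem \ref{NonCont.dInf.Lemma} already identifies every non-simplified $C\in\CC^3_{\rm c}$ as a discontinuity point of $\psi$, so the whole statement reduces to verifying this density. Everything therefore hinges on showing that, given an arbitrary target $C\in\CC^3_{\rm c}$ and $\varepsilon>0$, one can produce a non-simplified copula in $\CC^3_{\rm c}$ whose $d_\infty$-distance to $C$ is below $\varepsilon$.

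For the construction I would first invoke \cite[Proposition 3.2]{durantefs2010} to pick an empirical copula $\widehat{C}_n$ with $d_\infty(\widehat{C}_n,C)<\varepsilon/2$, and then perturb it by a small amount of a fixed non-simplified copula via the convex combination $D_\alpha:=(1-\alpha)\,\widehat{C}_n+\alpha\,C^{\rm EFGM}$ with $\alpha\in(0,1)$. Since $d_\infty(D_\alpha,\widehat{C}_n)=\alpha\,d_\infty(\widehat{C}_n,C^{\rm EFGM})$, choosing $\alpha$ sufficiently small yields $d_\infty(D_\alpha,C)<\varepsilon$ by the triangle inequality, so $D_\alpha$ lies in the prescribed ball for every small $\alpha$. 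It then remains to check that $D_\alpha\in\CC^3_{\rm c}$ and, crucially, that $D_\alpha$ is non-simplified.

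The key structural observation is that all three copulas involved have uniform third marginal, so disintegration with respect to the last coordinate yields the linear identity $K_{D_\alpha}(v,\cdot)=(1-\alpha)K_{\widehat{C}_n}(v,\cdot)+\alpha K_{C^{\rm EFGM}}(v,\cdot)$ for $\lambda$-almost every $v$, and the same linear combination governs the (conditional) univariate marginals. On each slab $V_i=(\tfrac{i-1}{n},\tfrac{i}{n}]$ the empirical copula contributes conditional marginals that are constant in $v$ (see the proof of Theorem \ref{Emp.Subset.S}), while $C^{\rm EFGM}\in\mathcal{F}^3_\Pi$ contributes the identity; hence the conditional marginals of $D_\alpha$ are continuous, giving $D_\alpha\in\CC^3_{\rm c}$, and are constant in $v$ throughout each $V_i$. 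At the same time the EFGM part injects the term $\alpha\,(1-2v)\,u_1(1-u_1)\,u_2(1-u_2)$ into $K_{D_\alpha}(v,[\nul,\uuu])$ (see Example \ref{SVC.PInd}), which genuinely varies with $v$ inside every $V_i$. Since the conditional marginals are frozen on $V_i$ but the kernel is not, the conditional copula extracted via Sklar's theorem must depend on $v$ on a set of positive measure, so $D_\alpha$ cannot be simplified for any $\alpha\in(0,1)$.

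The main obstacle is precisely this last non-simplifiedness verification: a naive convex-combination argument fails because the $v$-dependence carried by $C^{\rm EFGM}$ could, a priori, be absorbed into varying conditional marginals. Mixing with an empirical copula is exactly what rules this out, since its slabwise-constant marginals leave the injected $(1-2v)$-dependence with nowhere to go except the conditional copula. The remaining points, namely linearity of the kernels under mixtures, continuity of the conditional marginals, and the $d_\infty$ bookkeeping, are routine. Finally, the density of non-simplified copulas obtained in this way feeds directly into Theorem \ref{NonCont.dInf.Lemma} to conclude that $\psi$ is discontinuous on a dense subset of $(\CC^3_{\rm c},d_\infty)$.
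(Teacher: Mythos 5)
Your proposal is correct and follows essentially the same route as the paper: the paper likewise derives the corollary from Theorem \ref{NonCont.dInf.Lemma} together with the density of non-simplified copulas in $(\CC^3_{\rm c},d_\infty)$, which it obtains via convex combinations of empirical copulas with a non-simplified copula. You merely spell out the non-simplifiedness verification (slab-wise constant conditional marginals of the empirical part versus the injected $(1-2v)$-dependence from $C^{\rm EFGM}$) that the paper leaves as \enquote{straightforward to verify}.
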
{}


\subsection{Weak conditional convergence}

Focusing on weak conditional convergence the mapping $\psi$ behaves more nicely:  

\begin{theorem}{} \label{Cont.WCC}
Suppose that $C,C_1,C_2,\ldots$ are copulas in  $\CC_{\rm c}^3$. Then the following assertions hold:
\begin{onelist}
\item
$C_n \xrightarrow{\text{wcc}} C$
implies $(C_n)_{13} \xrightarrow{\text{wcc}} C_{13}$ and $(C_n)_{23} \xrightarrow{\text{wcc}} C_{23}$.

\item
$C_n \xrightarrow{\text{wcc}} C$
implies $(C_n)_p \xrightarrow{d_\infty} C_p$. 

\item
$C_n \xrightarrow{\text{wcc}} C$
implies $\psi(C_n) \xrightarrow{\text{wcc}} \psi(C)$.
\end{onelist}
\end{theorem}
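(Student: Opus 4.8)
The plan is to prove the three assertions in the stated order, using each to bootstrap the next; the substantive work sits in assertion~(2), whereas~(1) and~(3) become comparatively soft once the right continuity facts are available. For assertion~(1) I would simply invoke Theorem~\ref{wcc.margins} in the case $d=3$: choosing $J=\{1\}$ gives $J\cup\{3\}=\{1,3\}$ and hence $(C_n)_{13}\xrightarrow{\text{wcc}}C_{13}$, and choosing $J=\{2\}$ yields $(C_n)_{23}\xrightarrow{\text{wcc}}C_{23}$. No further argument is required.

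For assertion~(2) I would first translate weak conditional convergence into convergence of the conditional copulas. By the definition of $\xrightarrow{\text{wcc}}$, for $\lambda$-almost every $t\in\I$ the bivariate measures $K_{C_n}(t,\cdot)$ converge weakly to $K_C(t,\cdot)$; and since $C\in\CC^3_{\rm c}$, the univariate marginals $F_{1|3}(\cdot|t),F_{2|3}(\cdot|t)$ of the limit are continuous for almost every $t$. The key step is then the continuity of Sklar's copula extraction under weak convergence: whenever bivariate distribution functions converge weakly and the limit has continuous marginals, the associated (then unique) copulas converge uniformly. Applying this for almost every fixed $t$ yields $(C_n)_{12;3}^t\to C_{12;3}^t$ uniformly, hence pointwise. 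As the integrands are bounded by $1$, dominated convergence gives, for every fixed $\sss\in\I^2$,
\[
  (C_n)_p(\sss)=\int_{\I}(C_n)_{12;3}^t(\sss)\,\mathrm{d}\lambda(t)\longrightarrow\int_{\I}C_{12;3}^t(\sss)\,\mathrm{d}\lambda(t)=C_p(\sss).
\]
Since $(C_n)_p$ and $C_p$ are bivariate copulas and pointwise convergence of copulas is equivalent to uniform convergence, this proves $(C_n)_p\xrightarrow{d_\infty}C_p$.

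For assertion~(3) I would combine~(1) and~(2). The Markov kernel of a partial vine copula is $K_{\psi(C)}(t,[\nul,\uuu])=C_p\big(F_{1|3}(u_1|t),F_{2|3}(u_2|t)\big)$, so it suffices to show that for almost every $t$ the bivariate distribution functions $\uuu\mapsto K_{\psi(C_n)}(t,[\nul,\uuu])$ converge uniformly to $\uuu\mapsto K_{\psi(C)}(t,[\nul,\uuu])$, as uniform convergence of these yields the required weak convergence of the measures $K_{\psi(C_n)}(t,\cdot)$. From~(1), together with continuity of the limit marginals, the conditional marginals $(F_n)_{i|3}(\cdot|t)$ converge uniformly to $F_{i|3}(\cdot|t)$ for $i=1,2$ and almost every $t$; from~(2), $(C_n)_p\to C_p$ uniformly. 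A triangle inequality that splits off $d_\infty\big((C_n)_p,C_p\big)$ and then exploits the $1$-Lipschitz continuity of the copula $C_p$ in each argument drives the difference to $0$ uniformly in $\uuu$ on a full-measure set of $t$, which completes the proof.

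I expect the main obstacle to be the continuity step inside assertion~(2): the passage from weak convergence of the conditional measures $K_{C_n}(t,\cdot)$ to uniform convergence of the conditional copulas $(C_n)_{12;3}^t$. This is precisely where membership of $C$ in $\CC^3_{\rm c}$ is indispensable, since continuity of the limit marginals is what rescues the otherwise discontinuous copula extraction; I would isolate this as a separate bivariate lemma and carefully carry out the almost-everywhere bookkeeping, intersecting the finitely many full-measure sets of $t$ on which weak convergence of the kernels, continuity of the limit marginals, and uniform convergence of the conditional marginals simultaneously hold.
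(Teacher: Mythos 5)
Your proposal is correct and follows essentially the same route as the paper: assertion (1) via Theorem \ref{wcc.margins}, assertion (2) via the Sklar-continuity step (the paper's Lemma \ref{App.2}(2)) followed by dominated convergence, and assertion (3) by combining the uniform convergence of the partial copulas with the weak convergence of the conditional marginals (your triangle-inequality/Lipschitz argument is precisely the content of the paper's Lemma \ref{App.2}(1)). The obstacle you single out is indeed the one the paper isolates as a separate lemma, so no gap remains.
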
{}

\begin{proof}{}
  The first assertions follows from Theorem \ref{wcc.margins}. To prove the second one we proceed as follows:
  Since for almost all $v \in \I$ the marginal distribution functions of $K_{C_n}(v,.)$, $n \in \N$, and 
  of $K_{C}(v,.)$ are continuous, Lemma \ref{App.2} implies uniform convergence of the sequence
$((C_n)_{12;3}^v)_{n \in \N}$ to $C_{12;3}^v$. For $\sss \in \I^2$ we get 
\begin{eqnarray*}
  \big| (C_n)_{p} (\sss) - C_{p} (\sss) \big|
	&   =  & \biggl| \int\limits_{\I} (C_n)_{12;3}^t (\sss) - C_{12;3}^t (\sss) 
					 \; \mathrm{d} \lambda(t) \biggr|
	\\
	& \leq & \int\limits_{\I} \; \big| (C_n)_{12;3}^t (\sss) - C_{12;3}^t (\sss) \big|
					 \; \mathrm{d} \lambda(t) 
	\\
	& \leq & \int\limits_{\I} d_\infty \big( (C_n)_{12;3}^t, C_{12;3}^t \big)
					 \; \mathrm{d} \lambda(t) 
\end{eqnarray*}
and dominated convergence yields
\begin{eqnarray*}
  \lim_{n \to \infty} d_{\infty} \big( (C_n)_{p}, C_{p} \big) = 0
\end{eqnarray*}
  To prove the last assertion notice that for almost all $t \in \I$ 
  we have $(\psi(C))_{12;3}^t = C_p$ as well as $(\psi(C_n))_{12;3}^t = (C_n)_p$ for every $n \in \N$. Hence, using the 
  second assertion it follows that 
$$
  \lim_{n \to \infty} d_{\infty} \big( (\psi(C_n))_{12;3}^t, (\psi(C))^t_{12;3} \big)
	= \lim_{n \to \infty} d_{\infty} \big( (C_n)_{p}, C_{p} \big)
	= 0
$$
holds for almost all $t \in \I$. According to Lemma \ref{App.2} it now suffices to show that the marginal distribution 
functions of the Markov kernels converge weakly, which is, however, an immediate consequence of the fact that 
$(\psi(C))_{i3} = C_{i3}$ and $(\psi(C_n))_{i3} = (C_n)_{i3}$, $i \in \{1,2\}$ 
holds for every $n \in \N$ (see Lemma \ref{PVC.Lemma1}). 
\end{proof}{}


\subsection{Convergence with respect to $D_1$}

  We finally discuss $D_1$-continuity. Similar to the proof of Theorem \ref{Cont.WCC},
we first relate $D_1$-convergence of copulas to uniform convergence of the corresponding partial copulas. 
The slightly technical (but straightforward) proof of the following useful lemma is deferred to the appendix:

\begin{lemma}{} \label{Cont.D1.Lemma1}
Suppose that $C,C_1,C_2,\ldots$ are copulas in  $\CC_{\rm c}^3$. Then the following assertions hold:
\begin{onelist}
\item
  $C_n \xrightarrow{D_1} C$ implies
$(C_n)_{13} \xrightarrow{D_1} C_{13}$ and $(C_n)_{23} \xrightarrow{D_1} C_{23}$.

\item
  $(C_n)_p \xrightarrow{d_\infty} C_p$,
$(C_n)_{13} \xrightarrow{D_1} C_{13}$ and $(C_n)_{23} \xrightarrow{D_1} C_{23}$ imply $\psi(C_n) \xrightarrow{D_1} \psi(C)$.
\end{onelist}
\end{lemma}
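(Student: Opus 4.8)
The plan is to prove the two assertions independently: the first from the behaviour of the metric $D_\infty$ under passing to bivariate margins, the second from a triangle-inequality decomposition together with the Lipschitz property of copulas. For assertion (1) the naive attempt of bounding $D_1((C_n)_{13},C_{13})$ directly by $D_1(C_n,C)$ is bound to fail, since the marginal kernel only sees the values of the full kernel on the slice $\{(u_1,1):u_1\in\I\}$, which is a $\lambda^2$-null set in $\I^2$. I would therefore switch to $D_\infty$. Using the identity $K_{(C_n)_{13}}(t,[0,u_1]) = K_{C_n}(t,[\nul,(u_1,1)])$, which is the special case $J=\{1\}$ of Equation (\ref{Markov.kernel.Id.}), together with the analogous identity for $C$, one obtains
$$
  D_\infty\big((C_n)_{13},C_{13}\big)
  = \sup_{u_1\in\I}\int_\I \big|K_{C_n}(t,[\nul,(u_1,1)]) - K_C(t,[\nul,(u_1,1)])\big|\,\mathrm{d}\lambda(t)
  \leq D_\infty(C_n,C),
$$
because the supremum defining $D_\infty(C_n,C)$ runs over the larger index set $\uuu\in\I^2$. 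By Theorem \ref{Thm.D1.TV} we have $D_1\leq D_\infty$, and since $D_1$ and $D_\infty$ generate the same topology, $D_1(C_n,C)\to 0$ forces $D_\infty(C_n,C)\to 0$; combining these facts yields $D_1((C_n)_{13},C_{13})\leq D_\infty((C_n)_{13},C_{13})\leq D_\infty(C_n,C)\to 0$. The same argument applied to the second coordinate gives $(C_n)_{23}\xrightarrow{D_1}C_{23}$.

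For assertion (2) I would first record the Markov kernels of the partial vine copulas: by the definition of $\psi$ and disintegration, $K_{\psi(C)}(t,[\nul,\uuu]) = C_p\big(K_{C_{13}}(t,[0,u_1]),K_{C_{23}}(t,[0,u_2])\big)$, and analogously with $C_n$, $(C_n)_p$ in place of $C$, $C_p$. Abbreviating $a_n(t,u_1):=K_{(C_n)_{13}}(t,[0,u_1])$ and $b_n(t,u_2):=K_{(C_n)_{23}}(t,[0,u_2])$, and writing $a,b$ for the corresponding quantities of $C$, the triangle inequality gives the pointwise estimate
$$
  \big|K_{\psi(C_n)}(t,[\nul,\uuu]) - K_{\psi(C)}(t,[\nul,\uuu])\big|
  \leq \big|(C_n)_p(a_n,b_n) - C_p(a_n,b_n)\big| + \big|C_p(a_n,b_n) - C_p(a,b)\big|.
$$
The first summand is bounded by $d_\infty\big((C_n)_p,C_p\big)$, and the second, using that the bivariate copula $C_p$ is $1$-Lipschitz with respect to the $\ell_1$-norm, by $|a_n - a| + |b_n - b|$.

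It then remains to integrate this estimate over $(\uuu,t)\in\I^2\times\I$ (integrability being trivial, as all quantities lie in $\I$). The contribution of the first summand is at most $d_\infty((C_n)_p,C_p)$, which tends to $0$ by hypothesis. The crucial observation, and the step deserving the most care, is that $a_n-a$ depends on $(t,u_1)$ only and $b_n-b$ on $(t,u_2)$ only, so that integrating $|a_n-a|$ over the redundant variable $u_2$ (respectively $|b_n-b|$ over $u_1$) contributes a factor $1$, leaving exactly $D_1\big((C_n)_{13},C_{13}\big)$ and $D_1\big((C_n)_{23},C_{23}\big)$, both of which vanish in the limit by assumption. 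Hence $D_1(\psi(C_n),\psi(C))\to 0$. The only genuine obstacle is thus the bookkeeping that makes the reduction to the marginal $D_1$-distances \emph{exact}; this is precisely why $D_1$-convergence of the two bivariate margins (rather than mere $d_\infty$-convergence) is the correct hypothesis in (2).
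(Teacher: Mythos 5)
Your proposal is correct and follows essentially the same route as the paper: part (1) via the bound $D_\infty((C_n)_{13},C_{13})\leq D_\infty(C_n,C)$ obtained from the marginal-kernel identity together with the equivalence of $D_1$- and $D_\infty$-convergence, and part (2) via the triangle inequality plus Lipschitz continuity of bivariate copulas, reducing to $d_\infty((C_n)_p,C_p)+D_1((C_n)_{13},C_{13})+D_1((C_n)_{23},C_{23})$. The only (immaterial) difference is that you apply the Lipschitz estimate to $C_p$ and evaluate the copula difference at $(a_n,b_n)$, whereas the paper applies it to $(C_n)_p$ and evaluates the difference at $(a,b)$.
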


  We now show $D_1$-continuity of the mapping $\psi$ on the subclass of absolutely continuous copulas satisfying 
  some integrability condition. The following lemma whose proof is deferred to the appendix will be key for proving this 
  result: 

\begin{lemma}{} \label{Cont.D1.Lemma2}
  Suppose that $C,C_1,C_2,\ldots$ are copulas in  $\CC_{\rm c}^3$, that $C$ is absolutely continuous  
  and let $c_{13},c_{23}$ denote the densities of the marginal copulas $C_{13},C_{23}$ of $C$. 
  If there exist some constants $p_{13}, p_{23}, p_{123} \in (1,\infty)$ such that 
$$
  \| c_{13} \|_{p_{13}} < \infty,
	\qquad 
	\| c_{23} \|_{p_{23}} < \infty,
	\qquad 
	\| c_{13} \, c_{23} \|_{p_{123}} < \infty
$$
holds then $C_n \xrightarrow{D_1} C$ implies
$(C_n)_p \xrightarrow{d_\infty} C_p$.
\end{lemma}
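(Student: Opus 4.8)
The plan is to reduce the asserted \emph{uniform} convergence to an $L^1$-statement and then to produce that $L^1$-statement by three H\"older estimates, one matched to each integrability hypothesis. The starting point is the clean description of the partial copula
$$
  C_p(\sss) = \mu_C\big(G(\sss)\big), \qquad
	G(\sss) := \big\{(\uuu,t)\in\I^2\times\I : F_{1|3}(u_1|t)\le s_1,\ F_{2|3}(u_2|t)\le s_2\big\},
$$
i.e. $C_p$ is the joint distribution function of $\big(F_{1|3}(U_1|U_3),F_{2|3}(U_2|U_3)\big)$ for $(U_1,U_2,U_3)\sim C$; indeed, disintegration turns $\int_\I C_{12;3}^t(\sss)\,\mathrm{d}\lambda(t)$ into $\mu_C(G(\sss))$ because the $t$-section of $G(\sss)$ is the rectangle $[\nul,(a_1(s_1,t),a_2(s_2,t))]$ with conditional quantiles $a_i(s_i,t):=F_{i|3}^{-1}(s_i|t)$. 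Since $C_p$ and every $(C_n)_p$ are averages of copulas, they are themselves (bivariate) copulas and hence uniformly $1$-Lipschitz; as $(\CC^2,d_\infty)$ is compact, it will therefore suffice to prove $\|(C_n)_p - C_p\|_{L^1(\I^2)}\to 0$, because $L^1$-convergence of an equicontinuous family of copulas forces $d_\infty$-convergence (extract a uniformly convergent subsequence via Arzel\`a--Ascoli and identify its limit as $C_p$).

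For the $L^1$-bound I would write $G_n(\sss)$ for the analogue of $G(\sss)$ built from the conditional marginals of $C_n$ (so that $(C_n)_p(\sss)=\mu_{C_n}(G_n(\sss))$) and split
$$
  C_p(\sss) - (C_n)_p(\sss)
	= \underbrace{\big[\mu_C(G(\sss)) - \mu_{C_n}(G(\sss))\big]}_{T_1(\sss)}
	+ \underbrace{\big[\mu_{C_n}(G(\sss)) - \mu_{C_n}(G_n(\sss))\big]}_{T_2(\sss)}.
$$
Setting $g_n(t,\uuu):=|K_C(t,[\nul,\uuu]) - K_{C_n}(t,[\nul,\uuu])|$, the rectangle shape of the sections of $G(\sss)$ gives $|T_1(\sss)|\le \int_\I g_n\big(t,(a_1(s_1,t),a_2(s_2,t))\big)\,\mathrm{d}\lambda(t)$. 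Integrating over $\sss\in\I^2$ and, for fixed $t$, substituting $s_i = F_{i|3}(u_i|t)$ (legitimate since $C$ is absolutely continuous, so its conditional marginals have densities $c_{13}(\cdot,t),c_{23}(\cdot,t)$) converts $\int_{\I^2}|T_1|\,\mathrm{d}\lambda^2$ into $\int_{\I^3} g_n(t,\uuu)\,c_{13}(u_1,t)\,c_{23}(u_2,t)\,\mathrm{d}\lambda^3$. Because $g_n\le 1$ and $\int_{\I^3}g_n = D_1(C_n,C)\to 0$, one has $g_n\to 0$ in every $L^q(\I^3)$ with $q<\infty$; H\"older against $\|c_{13}c_{23}\|_{p_{123}}<\infty$ (with $q$ conjugate to $p_{123}$) then yields $\int_{\I^2}|T_1|\,\mathrm{d}\lambda^2\to 0$.

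The term $T_2$ records the mismatch between conditioning through $C$ versus through $C_n$, and here I would use that $(C_n)_{12;3}^t$ is a copula, hence $1$-Lipschitz: evaluating $\mu_{C_n}(G(\sss))$ through the kernel of $C_n$ replaces the arguments $s_i$ by $(F_n)_{i|3}(a_i(s_i,t)|t)$, and since $s_i=F_{i|3}(a_i(s_i,t)|t)$ this gives $|T_2(\sss)|\le \int_\I\big[\delta_n^{(1)}(a_1(s_1,t),t)+\delta_n^{(2)}(a_2(s_2,t),t)\big]\,\mathrm{d}\lambda(t)$ with $\delta_n^{(i)}(u,t):=|K_{C_{i3}}(t,[0,u]) - K_{(C_n)_{i3}}(t,[0,u])|$. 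Integrating over $\sss$ and substituting $s_i = F_{i|3}(u_i|t)$ turns the $i$-th summand into $\int_{\I^2}\delta_n^{(i)}\,c_{i3}\,\mathrm{d}\lambda^2$; since $\int_{\I^2}\delta_n^{(i)} = D_1((C_n)_{i3},C_{i3})\to 0$ by Lemma \ref{Cont.D1.Lemma1}(1) and $\delta_n^{(i)}\le 1$, H\"older against $\|c_{13}\|_{p_{13}},\|c_{23}\|_{p_{23}}<\infty$ gives $\int_{\I^2}|T_2|\,\mathrm{d}\lambda^2\to 0$. Combining the two bounds yields $\|(C_n)_p-C_p\|_{L^1}\to 0$, whence the claim.

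The delicate point is precisely $T_2$: because $(C_n)_p$ is built from $C_n$'s own, possibly non-absolutely-continuous, conditional marginals, neither a direct $D_1$- nor a $D_\infty$-estimate applies, and a careless bound produces $\int_\I \sup_u \delta_n^{(i)}(u,t)\,\mathrm{d}\lambda(t)$, a quantity strictly stronger than $D_1$-convergence that need \emph{not} vanish. The maneuver that rescues the argument is to invoke the $1$-Lipschitz property of the conditional copulas so as to express this mismatch solely through the marginal discrepancies $\delta_n^{(i)}$ evaluated at $C$'s quantiles $a_i(s_i,t)$, and only then to integrate in $\sss$; the quantile change of variables manufactures exactly the Jacobians $c_{13}$, $c_{23}$ (and their product for $T_1$) against which the three integrability hypotheses permit a H\"older bound. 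Thus absolute continuity of $C$ is used precisely to generate these densities, and the three separate $L^p$-conditions are what upgrade $L^1$-convergence of the Markov kernels to convergence of the relevant weighted integrals.
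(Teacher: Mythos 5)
Your proof is correct and follows essentially the same route as the paper's: your $T_1$/$T_2$ split is exactly the $I_1$/$I_2$ decomposition of the paper's Lemma \ref{App.3} (kernel difference at $C$'s conditional quantiles, plus a quantile-mismatch term controlled by Lipschitz continuity of $(C_n)_{12;3}^t$), followed by the same quantile change of variables producing $c_{13}$, $c_{23}$ as Jacobians and the same H\"older estimates exploiting $g_n\le 1$ to turn $D_1$-smallness into $L^q$-smallness. The only (harmless) deviation is the final upgrade from $L^1(\I^2)$- to uniform convergence of the partial copulas, which you obtain via the $1$-Lipschitz property instead of the paper's detour through $D_\infty(B_n,B)$ and the topological equivalence of $D_1$ and $D_\infty$.
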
{} 

Combining the previous two lemmata yields continuity of $\psi$ with respect to $D_1$ under some mild regularity conditions:
\begin{theorem}{} \label{Cont.D1}
  Consider a sequence of copulas $(C_{n})_{n \in \N}$ in $\CC_{\rm c}^3$ and an absolutely continuous copula 
  $C \in \CC_{\rm c}^3$, and let $c_{13},c_{23}$ denote the densities of the marginal copulas $C_{13},C_{23}$ of $C$, respectively. If there exist some constants $p_{13}, p_{23}, p_{123} \in (1,\infty)$ such that 
$$
  \| c_{13} \|_{p_{13}} < \infty,
	\qquad 
	\| c_{23} \|_{p_{23}} < \infty,
	\qquad 
	\| c_{13} \, c_{23} \|_{p_{123}} < \infty
$$
holds then $C_n \xrightarrow{D_1} C$ implies
$\psi(C_n) \xrightarrow{D_1} \psi(C)$.
\end{theorem}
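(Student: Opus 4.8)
The plan is to assemble the conclusion directly from the two preceding lemmas; no new machinery is required, since all the analytic work has already been isolated in Lemma \ref{Cont.D1.Lemma1} and Lemma \ref{Cont.D1.Lemma2}. The theorem is therefore essentially a bookkeeping combination, and the proof amounts to verifying that the hypotheses of each lemma are met and then chaining their conclusions.

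First, I would use the hypothesis $C_n \xrightarrow{D_1} C$ together with part (1) of Lemma \ref{Cont.D1.Lemma1} to transfer $D_1$-convergence to the two bivariate marginal copulas, obtaining $(C_n)_{13} \xrightarrow{D_1} C_{13}$ and $(C_n)_{23} \xrightarrow{D_1} C_{23}$. Second, observing that the integrability hypotheses $\|c_{13}\|_{p_{13}} < \infty$, $\|c_{23}\|_{p_{23}} < \infty$ and $\|c_{13}\,c_{23}\|_{p_{123}} < \infty$ coincide verbatim with those of Lemma \ref{Cont.D1.Lemma2}, I would apply that lemma to the same convergent sequence to deduce uniform convergence of the partial copulas, $(C_n)_p \xrightarrow{d_\infty} C_p$. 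Finally, having established both ingredients---namely $d_\infty$-convergence of the partial copulas and $D_1$-convergence of the marginals $C_{13}$ and $C_{23}$---I would invoke part (2) of Lemma \ref{Cont.D1.Lemma1}, whose hypotheses are now exactly satisfied, to conclude $\psi(C_n) \xrightarrow{D_1} \psi(C)$.

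At the level of this theorem there is essentially no obstacle, as the argument is a three-step concatenation of results that are assumed proved. The genuine difficulty lives one floor below, in Lemma \ref{Cont.D1.Lemma2}: there the $L^p$ bounds on the marginal densities and on their product are what allow one to pass from $D_1$-convergence---a comparatively weak notion that controls the conditional distribution functions only in an averaged sense---to the stronger uniform convergence of the averaged object $C_p$, by controlling the interaction between the conditional univariate marginals $F_{1|3}(\cdot|t)$, $F_{2|3}(\cdot|t)$ and the conditional copulas $C_{12;3}^t$. This is precisely why the integrability conditions cannot be dropped in the statement of Theorem \ref{Cont.D1}: they are not used in the combination step itself but are inherited, unchanged, as the hypotheses that make Lemma \ref{Cont.D1.Lemma2} applicable.
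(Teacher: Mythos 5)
Your proposal is correct and matches the paper exactly: the paper gives no separate proof of Theorem \ref{Cont.D1}, stating only that it follows by ``combining the previous two lemmata,'' which is precisely your three-step chain (Lemma \ref{Cont.D1.Lemma1}(1) for the marginals, Lemma \ref{Cont.D1.Lemma2} for $d_\infty$-convergence of the partial copulas, then Lemma \ref{Cont.D1.Lemma1}(2) to conclude). Your remark that the integrability hypotheses are inherited solely to make Lemma \ref{Cont.D1.Lemma2} applicable is also consistent with the paper's presentation.
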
{}


\section{Results for arbitrary dimension}\label{Arb.Dim.}

To confirm that the case of dimension three is similar to higher dimension in this section we extend (slightly modified versions of) our main results 
(Theorem \ref{Emp.Subset.S}, Corollary \ref{S.Dense}, Theorem \ref{PVC.NonOpt}, Theorem \ref{PVC.Gen.Dist.},
Theorem \ref{NonCont.dInf.Lemma} and Corollary \ref{discont.everywhere}) to arbitrary dimensions. 


\subsection{Simplified copulas.} 

Using disintegration for every copula $C \in \CC^d$, every $J \subseteq \{1,\dots,d\}$ with $2 \leq |J| \leq d$ and 
every $L \subseteq J$ with $1 \leq |L| \leq |J|-2$, 
there exists some Markov kernel $K_{C_J}$ such that the lower dimensional marginal copula $C_J$ of $C$ corresponding 
to the indices of the coordinates of $C$ belonging to $J$ can be expressed as
$$
  C_J(\uuu)
	= \int\limits_{[{\bf 0},\uuu_L]} K_{C_J} (\ttt, [\nul,\uuu_{J \backslash L}]) \;\mathrm{d}\mu_{C_L}(\ttt)
$$
for all $\uuu \in \I^{|J|}$. Thereby $\uuu_L \in \I^{|L|}$ denotes the vector of coordinates of $\uuu$ belonging to $L$,
and $\uuu_{J \backslash L} \in \I^{|J \setminus L|}$ the vector of coordinates of $\uuu$ belonging to $J \backslash L$.
 Since $K_{C_J}$ is a Markov kernel, for every $\uuu_{J \backslash L} \in \I^{|J \backslash L|}$ 
 the mapping $ \ttt \mapsto K_{C_J} (\ttt, [\nul,\uuu_{J \backslash L}]) $ is measurable and, 
for $\mu_{C_L}$-almost every $\ttt \in \I^{|L|}$, the mapping $ \uuu_{J \backslash L} \mapsto$ 
$K_{C_J} (\ttt, [\nul,\uuu_{J \backslash L}]) $ is a multivariate distribution function with (\emph{conditional}) univariate marginal distribution functions 
$F_{j|L}(.|\ttt)$, $j \in J \backslash L$, (conditional on $\ttt$). 
  By Sklar's theorem we get that for almost every $\ttt \in \I^{|L|}$ there exists some (\emph{conditional}) 
  copula $C_{J \backslash L;L}^\ttt$ (conditional on $\ttt$) satisfying
$$
  K_{C_J} (\ttt, [\nul,\uuu_{J \backslash L}])
	= C_{J \backslash L;L}^\ttt \big( F_{j_1|L} (u_{j_1}|\ttt), \dots, F_{j_{|J\backslash L|}|L} (u_{j_{|J\backslash L|}}|\ttt) \big) 
$$
for all $\uuu_{J \backslash L} = (u_{j_1}, \dots, u_{j_{|J\backslash L|}}) \in \I^{|J \backslash L|}$ 
such that the identity
\begin{equation*} 
  C_J(\uuu)
	= \int\limits_{[{\bf 0},\uuu_L]} C_{J \backslash L;L}^\ttt \big( F_{j_1|L} (u_{j_1}|\ttt), \dots, F_{j_{|J\backslash L|}|L} (u_{j_{|J\backslash L|}}|\ttt) \big)   
	  \;\mathrm{d}\mu_{C_L}(\ttt)
\end{equation*}
holds for all $\uuu \in \I^{|J|}$ .

We will refer to a copula $C \in \CC^d$ as \emph{universally simplified} if for every 
$J \subseteq \{1,\dots,d\}$ with $2 \leq |J| \leq d$ and 
every $L \subseteq J$ with $1 \leq |L| \leq |J|-2$ the following properties hold:
\begin{onelist}
\item[(U1) ]
There exists some copula $A \in \CC^{|J \backslash L|}$ such that the identity 
\begin{equation} \label{D.Eq.Cop.1}
  C_J(\uuu)
	= \int\limits_{[{\bf 0},\uuu_L]} A \big( F_{j_1|L} (u_{j_1}|\ttt), \dots, F_{j_{|J\backslash L|}|L} (u_{j_{|J\backslash L|}}|\ttt) \big)
	  \;\mathrm{d}\mu_{C_L}(\ttt)
\end{equation}
holds for all $\uuu \in \I^{|J|}$.

\item[(U2) ]
The (conditional) univariate marginal distribution functions $F_{j|L}(.|\ttt)$, $j \in J \backslash L$,
are continuous for $\mu_{C_L}$-almost all $\ttt \in \I^{|L|}$.
\end{onelist}
Notice that every universally simplified three-dimensional copula is simplified in the sense studied in the last sections but not necessarily vice versa.
If $C \in \CC^d$ is universally simplified then Sklar's theorem implies that the (conditional) copulas 
$C_{J \backslash L;L}^\ttt$ are unique for $\mu_{C_L}$-almost all $\ttt \in \I^{|L|}$. 
In what follows we will let $\CC^d_{\rm c}$ denote the family of all $d$-dimensional copulas having continuous 
(conditional) univariate marginal distribution functions, $\CC^d_{\rm US}$ will denote the family 
of all $d$-dimensional universally simplified copulas. 
Notice that $\Pi \in \CC^d_{\rm US}$ and that the collection of all absolutely continuous copulas is 
contained in $\CC^d_{\rm c}$.

As first step we now prove a sharper version of Theorem \ref{Emp.Subset.S} and show that all $d$-variate empirical copulas ($d$-linear interpolations) are universally simplified. 
\begin{theorem}{} \label{D.Emp.Subset.S}
  Every $d$-dimensional empirical copula is universally simplified. 
\end{theorem}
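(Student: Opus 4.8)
The claim generalizes Theorem \ref{Emp.Subset.S} from dimension three to arbitrary dimension, replacing "simplified" by "universally simplified". The plan is to verify the two defining conditions (U1) and (U2) directly from the explicit structure of the empirical copula. Recall that a $d$-dimensional empirical copula $\widehat{C}_n$ distributes mass uniformly on $n$ axis-aligned cubes of side $1/n$ and volume $1/n^d$, indexed by permutations $\sigma_1,\dots,\sigma_{d-1}$ of $\{1,\dots,n\}$ acting on the first $d-1$ coordinates (the last coordinate playing the role of the "index" coordinate as in the three-dimensional case). First I would fix an arbitrary $J \subseteq \{1,\dots,d\}$ with $2 \leq |J| \leq d$ and an arbitrary $L \subseteq J$ with $1 \leq |L| \leq |J|-2$, and write down the Markov kernel $K_{(\widehat{C}_n)_J}$ of the marginal copula $(\widehat{C}_n)_J$ explicitly as a sum of indicator products, exactly analogous to Equation (\ref{eq:empcopkern}).

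**Key steps.**
The essential observation is that every marginal copula of an empirical copula is again an empirical copula (of the corresponding marginal sample), hence inherits the same "uniform mass on $n$ cubes indexed by permutations" structure. Consequently, conditioning on the coordinates in $L$ and computing the regular conditional distribution of the coordinates in $J \backslash L$ yields a Markov kernel that factorizes: for $\mu_{C_L}$-almost every $\ttt$, the conditional distribution function $\uuu_{J \backslash L} \mapsto K_{(\widehat{C}_n)_J}(\ttt, [\nul, \uuu_{J \backslash L}])$ is a product of its (conditional) univariate marginals $F_{j|L}(\cdot|\ttt)$, $j \in J \backslash L$. This is because within each of the $n$ defining cubes the mass is spread as a product (uniform) measure, so the conditional copula coupling the coordinates in $J \backslash L$ is the independence copula $\Pi$. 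To establish (U1) I would therefore show
$$
  K_{(\widehat{C}_n)_J}(\ttt, [\nul, \uuu_{J \backslash L}])
	= \Pi \big( F_{j_1|L}(u_{j_1}|\ttt), \dots, F_{j_{|J \backslash L|}|L}(u_{j_{|J\backslash L|}}|\ttt) \big)
	= \prod_{j \in J \backslash L} F_{j|L}(u_j | \ttt),
$$
which exhibits $A = \Pi \in \CC^{|J \backslash L|}$ as the (conditioning-independent) copula required in Equation (\ref{D.Eq.Cop.1}). Integrating against $\mu_{C_L}$ via disintegration recovers $C_J$ and confirms (U1). For (U2), I would note that each conditional univariate marginal $F_{j|L}(\cdot|\ttt)$ is, up to the uniform-on-a-subinterval structure of the defining cubes, a continuous (piecewise linear) distribution function, so continuity holds for $\mu_{C_L}$-almost all $\ttt$; this is the exact higher-dimensional analogue of the continuity verification in the proof of Theorem \ref{Emp.Subset.S}.

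**Main obstacle.**
The genuinely three-dimensional proof is short because there is only one choice of $J$ and $L$ and the independence factorization is transparent from Equation (\ref{eq:empcopkern}). The main difficulty in the general case is bookkeeping: one must carefully set up notation for arbitrary index sets $J$ and conditioning sets $L$, and argue cleanly that the product (independence) structure survives \emph{after} marginalizing to $J$ and \emph{then} conditioning on $L$ — i.e. that marginalization and conditioning commute appropriately and that the resulting conditional copula is still $\Pi$ uniformly over $\ttt$. I expect the cleanest route to be an induction or a direct disintegration argument exploiting that, conditional on falling into a fixed defining cube, the coordinates are mutually independent and uniform; once this "conditional independence given the cube" is isolated, the factorization into a product of conditional marginals follows by a routine (if notationally heavy) computation, and one avoids having to write out the permutation indices in full generality.
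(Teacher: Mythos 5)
Your proposal is correct and follows essentially the same route as the paper: reduce to the case $J=\{1,\dots,d\}$ with $L$ a block of coordinates via the fact that marginals of empirical copulas are again empirical copulas, then read off from the uniform-product mass on the $n$ defining cubes that the conditional Markov kernel factorizes, so that $A=\Pi$ witnesses (U1) and the piecewise-linear conditional marginals give (U2). The paper carries out exactly the explicit kernel computation you sketch, so no further comparison is needed.
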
{}

\begin{proof}{} 
Suppose that $\XXX$ is a $d$-dimensional random vector with continuous univariate marginals and suppose that 
$\mathbf{X}_1,\ldots, \mathbf{X}_n$ is a sample from $\XXX$. W.l.o.g. assume that there are no ties.
Letting $\hat{C}_n$ denote the ($d$-linear interpolation of the) empirical copula there exists unique 
permutations $\sigma_1, \dots, \sigma_{d-1}$ of $\{1,\ldots,n\}$ such that the density 
$\hat{c}_n$ of $\hat{C}_n$ is given by (uniform distribution on $n$ $d$-dimen\-sional squares of volume $\frac{1}{n^d}$)
$$
  \hat{c}_n(\uuu) 
	= n^{d-1} \sum_{i=1}^n \left( \prod_{j=1}^{d} \mathds{1}_{I_i^j}(u_j) \right), \,\, \uuu=(u_1,\ldots,u_d) 
	\in \mathbb{I}^d,
$$
where 
$I^j_i=(\frac{\sigma_j(i)-1}{n},\frac{\sigma_j(i)}{n}]$, $j \in \{1, \dots, d-1\}$, and
$I_i^d=(\frac{i-1}{n},\frac{i}{n}]$ for every $i \in \{1,\ldots,n\}$. 
Since marginals of empirical copulas are empirical copulas too it suffices to prove the result for $J=\{1,\ldots,d\}$ and for $1 \leq l \leq d-2$ with $L=\{d-l+1,\ldots,d\}$. Considering that the $l$-dimensional marginal copula
$(\hat{C}_n)_L$ of $(\hat{C}_n)$ assigns full mass to the set
$$
\bigcup_{i=1}^n \underbrace{\left(\bigtimes_{j=d-l+1}^d I_i^j \right)}_{=:\Omega_i}
$$
it is enough to consider $\mathbf{u}_L \in \Omega_{i_0}$ for some $i_0 \in \{1,\ldots,n\}$. 
For such $\mathbf{u}_L$ the Markov kernel (conditioning on the coordinates in $L$) is given by (straightforward consequence 
of first considering the conditional density)
$$
  K_{\hat{C}_n}(\mathbf{u}_L,[0,u_1]\times \cdots \times [0,u_{d-l}]) 
	= n^{d-l} \left( \prod_{j=1}^{d-l} \int_{[0,u_j]} \mathds{1}_{I_{i_0}^j}(x_j) \; \mathrm{d} \lambda(x_j) \right)
$$
and the conditional univariate distribution functions $F_{j\vert L} (u_j \vert \mathbf{u}_L)$ for every $j \in J \setminus L$
can be expressed as
$$
F_{j\vert L} (u_j \vert \mathbf{u}_L) = n \int_{[0,u_j]} \mathds{1}_{I_{i_0}^j}(x_j)\; \mathrm{d} \lambda(x_j).
$$
Having this we have shown 
$$
K_{\hat{C}_n}(\mathbf{u}_L,[0,u_1]\times \cdots \times [0,u_{d-l}]) = \Pi_{d-l} \left(F_{1\vert L} (u_1 \vert \mathbf{u}_L),
 \ldots, F_{d-l\vert L} (u_{d-l} \vert \mathbf{u}_L) \right),
$$
which completes the proof.
\end{proof}{}

  Since the collection of all empirical copulas is dense in $(\CC^d, d_{\infty})$
(\cite[Proposition 3.2]{durantefs2010}) Theorem \ref{D.Emp.Subset.S} has the following immediate consequence:

\begin{corollary}{} \label{D.S.Dense}
  The collection of all universally simplified $d$-dimensional co\-pulas is dense in $(\CC^d, d_{\infty})$. 
\end{corollary}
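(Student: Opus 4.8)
The plan is to deduce this corollary immediately from the preceding Theorem \ref{D.Emp.Subset.S} together with the denseness of empirical copulas, exactly as Corollary \ref{S.Dense} was obtained from Theorem \ref{Emp.Subset.S} in the three-dimensional setting. The key structural observation is that $d_\infty$-denseness is inherited by supersets: if $E \subseteq \CC^d$ is dense and $E \subseteq S \subseteq \CC^d$, then $S$ is dense as well. Accordingly I would take $E$ to be the family of all $d$-dimensional empirical copulas and $S := \CC^d_{\rm US}$.

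First I would invoke Theorem \ref{D.Emp.Subset.S}, which asserts that every $d$-dimensional empirical copula is universally simplified; this yields the inclusion $E \subseteq \CC^d_{\rm US}$. Next I would recall \cite[Proposition 3.2]{durantefs2010}, according to which $E$ is dense in $(\CC^d, d_\infty)$, that is, for every $C \in \CC^d$ and every $\varepsilon > 0$ there exists an empirical copula $\hat{C}$ with $d_\infty(C,\hat{C}) < \varepsilon$. Since such $\hat{C}$ automatically lies in $\CC^d_{\rm US}$ by the inclusion above, the family of universally simplified copulas meets every $d_\infty$-ball, which is precisely the assertion to be proved.

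There is no genuine obstacle here; the result is a one-line consequence of the two cited facts, and the only point I would take care to verify is a matter of consistency rather than difficulty: that the empirical copulas guaranteed to be dense by \cite{durantefs2010} are the same $d$-linear interpolations of the empirical subcopula to which Theorem \ref{D.Emp.Subset.S} applies (rather than some other variant of the empirical copula). Once this identification is confirmed, the corollary follows directly, and I would phrase the proof in a single sentence combining Theorem \ref{D.Emp.Subset.S} with \cite[Proposition 3.2]{durantefs2010}.
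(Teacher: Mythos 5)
Your proposal is correct and coincides exactly with the paper's argument: the corollary is stated there as an immediate consequence of Theorem \ref{D.Emp.Subset.S} combined with the denseness of empirical copulas from \cite[Proposition 3.2]{durantefs2010}, with the superset-of-a-dense-set observation left implicit. Your extra care in checking that the empirical copulas in both references are the same $d$-linear interpolations is sensible but does not change the substance.
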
{}
 Thus, every copula can be approximated arbitrarily well by universally simplified ones. 
 Given a $d$-dimensional, non universally simplified copula $C$, a good uniform approximation by a universally 
simplified one from the same (Fr{\'e}chet) class might, however, not exist. 
The next example illustrates this fact: 

\begin{example}{} \label{D.SVC.PInd.2}
(Family $\mathcal{F}^d_{\rm Ind}$ of all copulas $C$ satisfying 
$C(\uuu,\vvv) = B(\uuu) \, \Pi(\vvv)$ for all $(\uuu,\vvv) \in \I^3 \times \I^{d-3}$
and some $B \in \CC^3_{c}$ fulfilling $B_{12}=B_{13}=B_{23}=\Pi$.) \\
First, notice that for every universally simplified copula $D$ in $\mathcal{F}^d_{\rm Ind}$ there exists some 
co\-pula $A \in \CC^2$ such that, 
according to Equation (\ref{D.Eq.Cop.1})
($J = \{1,\dots, d\}$ and $L = \{3,\dots,d\}$),
the identity 
\begin{eqnarray*}
  D(\uuu,\vvv)
  & = & \int\limits_{[{\bf 0},\vvv]} A \big( F_{1|L} (u_{1}|\ttt), F_{2|L} (u_{2}|\ttt) \big)
	  \;\mathrm{d}\mu_{C_L}(\ttt)
	\\
	& = & \int\limits_{[{\bf 0},\vvv]} A (u_{1},u_{2})
	  \;\mathrm{d}\lambda^{d-2}(\ttt)
  \\
	& = & A(\uuu) \, \Pi(\vvv)
\end{eqnarray*}
holds for all $(\uuu,\vvv) \in \I^2 \times \I^{d-2}$. Notice that the second equality holds since in case of 
$\mathbf{X} \sim D$ we have that $(X_1,X_3)$ and $(X_4,\ldots,X_d)$ are independent, hence
\begin{eqnarray*}
\mathbb{P}\left(X_1 \leq u_1, X_3 \leq u_3,\ldots,X_d \leq u_d \right) &=&
\mathbb{P}\left(X_1 \leq u_1, X_3 \leq u_3\right) \\
&& \,\, \cdot \,\, \mathbb{P} \left(X_4 \leq u_4,\ldots,X_d \leq u_d \right) \\
&=& \mathbb{P}\left(X_1 \leq u_1\right) \mathbb{P} \left(X_3 \leq u_3,\ldots,X_d \leq u_d \right)
\end{eqnarray*}
for all $\mathbf{u} \in (0,1)^d$, so $X_1$ and $(X_3,\ldots,X_d)$ are independent and we get  
$F_{1|L} (u_{1}|\ttt) = u_1$ (the same reasoning applies to $F_{2|L}$). \\
Setting $C(\uuu,\vvv) = C^{\rm Cube} (\uuu) \, \Pi(\vvv)$ for all $(\uuu,\vvv) \in \I^3 \times \I^{d-3}$ and 
considering Example \ref{SVC.PInd.2} it therefore follows that $d_\infty(C,D) > \varepsilon$ (with $\varepsilon$
 as in Example \ref{SVC.PInd.2}), i.e., it is not possible to 
approximate $C$ by universally simplified copulas in $\mathcal{F}^d_{\rm Ind}$ with an error smaller than $\varepsilon$. 
\end{example}{}


\subsection{Partial vine copulas (PVC-D)}

We finally introduce \emph{partial vine copulas} (PVC) belonging to a $D$-vine structure.

Given $C \in\CC^d_{\rm c}$ the hierarchical construction of a partial vine copula $C^{\rm PVC}$ of $D$-vine structure 
may be build as follows:
\begin{onelist}
\item[(S1) ] 
the bivariate marginal copulas $\big(C^{\rm PVC}\big)_{i,i+1}$ are defined as  
$$\big(C^{\rm PVC}\big)_{i,i+1} := C_{i,i+1}$$
for all $i \in \{1,\dots,d-1\}$.

\item[(S2) ] 
In the second step the bivariate partial copulas $(C_{p})_{i,i+2}$, $i \in \{1,\dots,d-2\}$ are defined by 
$$
  (C_{p})_{i,i+2} (u_i,u_{i+2})
	:= \int\limits_{\I} 
				 C_{i,i+2;i+1}^{w_{i+1}} (u_i,u_{i+2}) \; \mathrm{d}\lambda(w_{i+1})
$$
and the three-dimensional marginal copulas $\big(C^{\rm PVC}\big)_{i,i+1,i+2}$ with \\
 $i \in \{1,\dots,d-2\}$, are 
constructed via 
\begin{eqnarray*}
  \lefteqn{\big(C^{\rm PVC}\big)_{i,i+1,i+2} (u_i,u_{i+1},u_{i+2})}
	\\
	& := & \int\limits_{[0,u_{i+1}]} 
				 (C_{p})_{i,i+2} \big( \big(F^{\rm PVC}\big)_{i|i+1} (u_{i}|t_{i+1}), \big(F^{\rm PVC}\big)_{i+2|i+1} (u_{i+2}|t_{i+1}) \big)  
				\; \mathrm{d}\lambda(t_{i+1})
\end{eqnarray*}
where the (conditional) univariate distribution functions 
$(F^{\rm PVC})_{i|i+1}$ and $(F^{\rm PVC})_{i+2|i+1}$ correspond to the copulas 
$(C^{\rm PVC})_{i,i+1}$ and $(C^{\rm PVC})_{i+1,i+2}$ from previous steps.

\item[(S3) ] 
The bivariate partial copulas $(C_{p})_{i,i+3}$ with $i \in \{1,\dots,d-3\}$ are defined by 
$$
  (C_{p})_{i,i+3} (u_i,u_{i+3})
	:= \int\limits_{\I^2} 
		 C_{i,i+3;i+1,i+2}^{(w_{i+1},w_{i+2})} (u_i,u_{i+3}) 
		 \; \mathrm{d} \mu_{(C^{\rm PVC})_{i+1,i+2}} (w_{i+1},w_{i+2})
$$
and
the $4$-dimensional marginal copulas $\big(C^{\rm PVC}\big)_{i,i+1,i+2,i+3}$ with \\
 $i \in \{1,\dots,d-3\}$ are 
constructed via 
\begin{eqnarray*}
  \lefteqn{\big(C^{\rm PVC}\big)_{i,i+1,i+2,i+3} (u_i,u_{i+1},u_{i+2},u_{i+3})}
	\\*
	& := & \int\limits_{[0,u_{i+1}] \times [0,u_{i+2}]} 
				 (C_{p})_{i,i+3} \big( \big(F^{\rm PVC}\big)_{i|i+1,i+2} (u_{i}|(t_{i+1},t_{i+2})), \big(F^{\rm PVC}\big)_{i+3|i+1,i+2} (u_{i+3}|(t_{i+1},t_{i+2})) \big)  
	\\*
	&    & \qquad \; \mathrm{d} \mu_{(C^{\rm PVC})_{i+1,i+2}} (t_{i+1},t_{i+2}),
\end{eqnarray*}
where the (conditional) univariate distribution functions 
$(F^{\rm PVC})_{i|i+1,i+2}$ and $(F^{\rm PVC})_{i+3|i+1,i+2}$ correspond to the three-dimensional copulas 
$(C^{\rm PVC})_{i,i+1,i+2}$ and $(C^{\rm PVC})_{i+1,i+2,i+3} $ from previous steps.

\item[(S4) ] 
The individual steps are continued until one obtaines the $d$-dimensional partial vine copula $C^{\rm PVC}$ 
of $D$-vine structure.
\end{onelist}
The mapping induced by the afore-mentioned procedure will be denoted 
by $\psi: \CC^d_{\rm c} \to \CC^d_{\rm c}$, by construction it fulfills 
$$
 \psi(C) = C^{\rm PVC}.
$$
Notice that, by definition, $C^{\rm PVC}$ is simplified with respect to the underlying $D$-vine structure but may fail to be universally simplified.

\begin{example}{} \label{D.SVC.PInd.3} 
	We calculate $\psi(C)$ for the $d$-dimensional copula $C \in \CC^d_{\rm c}$ given by
$$
  C(\uuu,\vvv)
	:= C^{\rm Cube}(\uuu) \, \Pi(\vvv)
$$
for all $(\uuu,\vvv) \in \I^3 \times \I^{d-3}$ (see Example \ref{D.SVC.PInd.2}) and show that 
$\psi(C) = \Pi$ holds. \\
We start with the following observations: 
\begin{hylist}
\item 
$C$ satisfies $C_J = \Pi$ for all $J \subseteq \{1,\dots,d\}$ with $2 \leq |J| \leq d-1$ and $\{1,2,3\} \not \subseteq J$.

\item 
$C$ is absolutely continuous.

\item
the Markov kernel of $C_{\{1,\dots,j\}}$, $j \in \{4,\dots,d\}$, with respect to the coordinates $2,\dots,j-1$ 
satisfies 
\begin{eqnarray*}
  \lefteqn{K_{C_{\{1,\dots,j\}}} \big( (t_2, \dots, t_{j-1}), [0,u_1] \times [0,u_j] \big)}
	\\
	& = & \begin{cases}
		\min{(2u_1,1)} \, u_j 	& \textrm{ if } (t_2,t_3) \in \big( 0,\tfrac{1}{2} \big)^2 \cup \big( \tfrac{1}{2},1 \big)^2
					\\
		\max{(2 u_1-1,0)} \, u_j 		& \textrm{ if } (t_2,t_3) \in \big( 0,\tfrac{1}{2} \big) \times \big( \tfrac{1}{2},1 \big) \cup \big( \tfrac{1}{2},1 \big) \times \big( 0,\tfrac{1}{2} \big)
				\end{cases}
\end{eqnarray*}
for almost all $(t_2, \dots, t_{j-1}) \in \I^{j-2}$, hence $C_{1,j;2,\dots,j-1} = \Pi$ and it follows that 
the partial copula $(C_p)_{1,j ; 2,\dots,j-1}$ coincides with $\Pi$. 
\end{hylist}
We now calculate the partial vine copula $\psi(C)$ step-by-step:
\begin{onelist}
\item[(S1) ]
In the first step we obtain
$$
  \big(C^{\rm PVC}\big)_{i,i+1}
  = \Pi		
$$
for all $i \in \{1, \dots, d-1\}$.

\item[(S2) ]
The bivariate partial copulas satisfy
$$
  (C_{p})_{1,3} (u_1,u_{3})
	= \int\limits_{\I} 
		C_{1,3;2}^{w_{2}} (u_1,u_{3}) \; \mathrm{d}\lambda(w_{2})
	= \tfrac{1}{2} A^1 (u_1,u_{3}) + \tfrac{1}{2} A^2 (u_1,u_{3})
	= \Pi (u_1,u_{3})
$$
for all $(u_1,u_{3}) \in \I^2$ and for every $i \in \{2,\dots,d-2\}$ we get 
\begin{eqnarray*}
  (C_{p})_{i,i+2} (u_i,u_{i+2})
	&=& \int\limits_{\I} C_{i,i+2;i+1}^{w_{i+1}} (u_i,u_{i+2}) \; \mathrm{d}\lambda(w_{i+1})
	= \int\limits_{\I} \Pi (u_i,u_{i+2}) \; \mathrm{d}\lambda(w_{i+1}) \\
	&=& \Pi (u_i,u_{i+2})
\end{eqnarray*}
for all $(u_i,u_{i+2}) \in \I^2$.
Therefore, the $3$-dimensional marginal copulas satisfy
\begin{eqnarray*}
  \lefteqn{\big(C^{\rm PVC}\big)_{i,i+1,i+2} (u_i,u_{i+1},u_{i+2})}
	\\
	& = & \int\limits_{[0,u_{i+1}]} 
		    (C_{p})_{i,i+2} \big( \big(F^{\rm PVC}\big)_{i|i+1} (u_{i}|t_{i+1}), \big(F^{\rm PVC}\big)_{i+2|i+1} (u_{i+2}|t_{i+1}) \big)  
		    \; \mathrm{d}\lambda(t_{i+1})
	\\*
	& = & \int\limits_{[0,u_{i+1}]} 
		    (C_{p})_{i,i+2} (u_{i},u_{i+2}) \; \mathrm{d}\lambda(t_{i+1})
	\\*
	& = & \Pi (u_{i},u_{i+1},u_{i+2}) 
\end{eqnarray*}
for all $(u_i,u_{i+1},u_{i+2}) \in \I^3$ 
and every $i \in \{1,\dots,d-2\}$.

\item[(S3) ]
In the third step the bivariate partial copulas satisfy
\begin{eqnarray*}
  (C_{p})_{i,i+3} (u_i,u_{i+3})
	& = & \int\limits_{\I^2} 
		    C_{i,i+3;i+1,i+2}^{(w_{i+1},w_{i+2})} (u_i,u_{i+3}) 
		    \; \mathrm{d} \mu_{(C^{\rm PVC})_{i+1,i+2}} (w_{i+1},w_{i+2})
	\\
	& = & \int\limits_{\I^2} 
		    \Pi (u_i, u_{i+3}) 
		    \; \mathrm{d} \lambda^2 (w_{i+1},w_{i+2})
	\\
	& = & \Pi (u_i, u_{i+3}) 
\end{eqnarray*}
for all $(u_i,u_{i+3}) \in \I^2$ and for every $i \in \{1,\dots,d-3\}$. Hence 
follows that the $4$-dimensional marginal copulas satisfy
\begin{eqnarray*}
  \lefteqn{\big(C^{\rm PVC}\big)_{i,i+1,i+2,i+3} (u_i,u_{i+1},u_{i+2},u_{i+3})}
	\\
	& = & \int\limits_{[0,u_{i+1}] \times [0,u_{i+2}]} 
				 (C_{p})_{i,i+3} \big( \big(F^{\rm PVC}\big)_{i|i+1,i+2} (u_{i}|(t_{i+1},t_{i+2})), \big(F^{\rm PVC}\big)_{i+3|i+1,i+2} (u_{i+3}|(t_{i+1},t_{i+2})) \big)  
	\\
	&   & \qquad \; \mathrm{d} \mu_{(C^{\rm PVC})_{i+1,i+2}} (t_{i+1},t_{i+2})
  \\
	& = & \int\limits_{[0,u_{i+1}] \times [0,u_{i+2}]} (C_{p})_{i,i+3} (u_{i}, u_{i+3})  
        \; \mathrm{d} \lambda^2 (t_{i+1},t_{i+2})
	\\
	& = & \Pi (u_i,u_{i+1},u_{i+2},u_{i+3})
\end{eqnarray*}
for all $(u_1,u_2,u_{3},u_4) \in \I^4$.

\item[(S4) ]
Continuing in the same manner we finally arrive at $\psi(C) = C^{\rm PVC} = \Pi$.
\end{onelist}
\end{example}{}

\begin{remark}{} \label{Rem.SpanhelKurz}
Notice that the afore-mentioned construction principle of a partial vine copula of $D$-vine structure 
differs from the one introduced in \citet[p.1264]{spanhel2019}. It is, however, straightforward to verify that
constructing the partial vine copula version according to Spanhel and Kurz for the copula $C$ in Example \ref{D.SVC.PInd.3} yields the same output $\Pi$.
\end{remark}{}


\subsection{Optimality of partial vine copulas and continuity of $\delta$}
\label{subsect.PVC}

Example \ref{D.SVC.PInd.3} allows to prove the following multivariate version of Theorem \ref{PVC.Gen.Dist.}: 

\begin{theorem}{} \label{D.PVC.Gen.Dist.}
For every $d \geq 3$ there exists a copula $C \in \CC^d_{\rm c}$ fulfilling $d_\infty(C, \psi(C)) \geq \tfrac{1}{8}$ and we have 
$$
  \sup_{C \in \CC^d_{\rm c}} d_\infty \big(C,\psi(C)\big) \geq \frac{1}{8}.
$$
\end{theorem}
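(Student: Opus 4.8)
The plan is to reuse the copula already constructed in Example \ref{D.SVC.PInd.3} rather than building anything new. Concretely, I would take
$$
  C(\uuu,\vvv) := C^{\rm Cube}(\uuu) \, \Pi(\vvv), \qquad (\uuu,\vvv) \in \I^3 \times \I^{d-3},
$$
where $C^{\rm Cube}$ is the three-dimensional copula from Example \ref{SVC.PInd}. Example \ref{D.SVC.PInd.3} already establishes both that $C \in \CC^d_{\rm c}$ and that its $D$-vine partial vine copula collapses to independence, i.e. $\psi(C) = \Pi$. Hence the asserted inequality reduces entirely to bounding $d_\infty(C,\Pi)$ from below, and the supremum statement is then immediate since $C \in \CC^d_{\rm c}$.

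For the lower bound it suffices to evaluate the difference $C - \Pi$ at a single well-chosen point. Since both $C$ and the $d$-dimensional independence copula factor across the split $\I^3 \times \I^{d-3}$, writing $\Pi(\uuu,\vvv) = \Pi(\uuu)\,\Pi(\vvv)$ I obtain
$$
  C(\uuu,\vvv) - \Pi(\uuu,\vvv) = \big( C^{\rm Cube}(\uuu) - \Pi(\uuu) \big) \, \Pi(\vvv).
$$
Choosing $\uuu = \tfrac{\bf 1}{\bf 2}$ (the three-dimensional vector of halves) and $\vvv = \one \in \I^{d-3}$ yields $\Pi(\vvv) = 1$, together with $C^{\rm Cube}(\tfrac{\bf 1}{\bf 2}) = \tfrac{1}{4}$ and $\Pi(\tfrac{\bf 1}{\bf 2}) = \tfrac{1}{8}$, exactly the values appearing in the three-dimensional computation in Theorem \ref{PVC.PInd.Dist.}. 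Thus the difference equals precisely $\tfrac{1}{8}$, and consequently
$$
  d_\infty\big(C,\psi(C)\big) = d_\infty(C,\Pi) \geq \big| C(\tfrac{\bf 1}{\bf 2},\one) - \Pi(\tfrac{\bf 1}{\bf 2},\one) \big| = \tfrac{1}{8}.
$$

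I expect essentially no obstacle to remain once Example \ref{D.SVC.PInd.3} is in hand: the entire analytic difficulty, namely verifying that every partial-copula average degenerates to $\Pi$ through all tree levels of the $D$-vine, has already been absorbed into that example. The only remaining care is the elementary bookkeeping of the product structure and the evaluation $C^{\rm Cube}(\tfrac{\bf 1}{\bf 2}) = 1/4$. If a fully self-contained argument were desired, the one genuinely substantive point to double-check would be that the $D$-vine construction $\psi$ of Section \ref{Arb.Dim.} (as opposed to the three-dimensional $\psi$) does indeed return $\Pi$ on this $C$; but this is precisely the content of Example \ref{D.SVC.PInd.3}, so invoking it suffices.
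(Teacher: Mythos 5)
Your proposal is correct and follows essentially the same route as the paper: the paper's proof also takes the copula $C(\uuu,\vvv)=C^{\rm Cube}(\uuu)\,\Pi(\vvv)$ from Example \ref{D.SVC.PInd.3}, invokes $\psi(C)=\Pi$ established there, and evaluates at the point $\big(\tfrac{1}{2},\tfrac{1}{2},\tfrac{1}{2},\one\big)$ to obtain $\tfrac{1}{4}-\tfrac{1}{8}=\tfrac{1}{8}$. No gap to report.
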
{}
\begin{proof}{}
	Again consider the $d$-dimensional copula $C \in \CC^d_{\rm c}$ studied in Example \ref{D.SVC.PInd.3}. 
	In this case we have $\psi(C)=\Pi$ from which we get
\begin{eqnarray*}
  d_\infty \big(C,\psi(C)\big)
	&\geq& C \big( \tfrac{1}{2},\tfrac{1}{2},\tfrac{1}{2}, {\bf 1} \big) - \Pi \big( \tfrac{1}{2},\tfrac{1}{2},\tfrac{1}{2}, {\bf 1} \big) \\
	  &=&  C^{\rm Cube} \big( \tfrac{1}{2},\tfrac{1}{2},\tfrac{1}{2} \big) - 
	    \Pi \big( \tfrac{1}{2},\tfrac{1}{2},\tfrac{1}{2} \big)
	  = \frac{1}{4} - \frac{1}{8}
	  = \frac{1}{8},
\end{eqnarray*}
which implies the stated result.
\end{proof}{}

We conclude the paper with the multivariate versions of Theorem \ref{NonCont.dInf.Lemma} and Corollary \ref{discont.everywhere}:

\begin{theorem}{} \label{NonCont.dInf.Lemma.delta}\label{NonCont.dInf.delta}
Suppose that $C \in \CC_{\rm c}^d$ satisfies $d_\infty (C,\psi(C)) \neq 0$. Then $C$ is a discontinuity point of 
the mapping $\psi: \CC^d_{\rm c} \to \CC^d_{\rm c}$ assigning every copula its partial D-vine.  
\end{theorem}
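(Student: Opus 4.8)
The plan is to transfer the argument used for the three-dimensional Theorem \ref{NonCont.dInf.Lemma} almost verbatim, replacing the two-dimensional inputs by their $d$-dimensional counterparts. Assume $C \in \CC^d_{\rm c}$ satisfies $\varepsilon := d_\infty(C,\psi(C)) > 0$. I would let $\mathbf{X}_1, \mathbf{X}_2, \ldots$ be an i.i.d. sample from $\mathbf{X} \sim C$ and let $\widehat{C}_n$ denote the corresponding empirical copula. With probability one the sample has no ties and $(\widehat{C}_n)_{n \in \N}$ converges to $C$ with respect to $d_\infty$; I fix such a realization.

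The crucial ingredient is that $\psi$ fixes empirical copulas, i.e. $\psi(\widehat{C}_n) = \widehat{C}_n$ for every $n \in \N$, which is the multivariate analogue of Lemma \ref{PVC.Lemma1}(2). To obtain it I would first invoke Theorem \ref{D.Emp.Subset.S}, by which every $d$-dimensional empirical copula is universally simplified; in particular, all conditional copulas entering the $D$-vine decomposition of $\widehat{C}_n$ are independent of their conditioning variables. Then I would run through the hierarchical construction (S1)--(S4) by induction over the trees: in step (S1) one has $\big(\widehat{C}_n^{\rm PVC}\big)_{i,i+1} = (\widehat{C}_n)_{i,i+1}$ by definition, while in the higher trees the partial copulas $(C_p)_{i,i+k}$ are averages of conditional copulas that are already constant in the conditioning variables, so each average equals that constant conditional copula and the reconstruction step reproduces exactly the corresponding marginal copula of $\widehat{C}_n$. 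Feeding this induction up to dimension $d$ yields $\psi(\widehat{C}_n) = \widehat{C}_n$.

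Granting this, the proof concludes exactly as in dimension three. Using the triangle inequality together with $\psi(\widehat{C}_n) = \widehat{C}_n$ I obtain
\begin{eqnarray*}
  \varepsilon
	&   =  & d_\infty(C, \psi(C)) \\
	& \leq & d_\infty\big(C, \psi(\widehat{C}_n)\big) + d_\infty\big(\psi(\widehat{C}_n), \psi(C)\big) \\
	&   =  & d_\infty\big(C, \widehat{C}_n\big) + d_\infty\big(\psi(\widehat{C}_n), \psi(C)\big)
\end{eqnarray*}
for every $n \in \N$. Since $\lim_{n \to \infty} d_\infty(\widehat{C}_n, C) = 0$, this forces $\liminf_{n \to \infty} d_\infty\big(\psi(\widehat{C}_n), \psi(C)\big) \geq \varepsilon > 0$, even though $\widehat{C}_n \to C$ with respect to $d_\infty$. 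Hence $\psi$ fails to be continuous at $C$, i.e. $C$ is a discontinuity point of $\psi$.

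The main obstacle is the fixed-point claim $\psi(\widehat{C}_n) = \widehat{C}_n$: unlike in dimension three, where Lemma \ref{PVC.Lemma1}(2) is available off the shelf, here one must verify that the $D$-vine reconstruction genuinely reproduces a universally simplified copula. The delicate point is the bookkeeping of the (conditional) univariate marginal distribution functions $(F^{\rm PVC})_{\cdot|\cdot}$ generated at each tree and checking that they coincide with those of $\widehat{C}_n$, so that the hierarchical integrals telescope back to $\widehat{C}_n$; everything else is a direct copy of the three-dimensional argument.
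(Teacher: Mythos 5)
Your proposal is correct and follows essentially the same route as the paper: the paper's proof is a one-line reduction to the three-dimensional argument, resting precisely on the two facts you isolate, namely that empirical copulas converge to $C$ w.r.t.\ $d_\infty$ and that they are invariant under $\psi$. You actually go further than the paper by sketching, via Theorem \ref{D.Emp.Subset.S} and an induction over the trees of the $D$-vine, why the fixed-point identity $\psi(\widehat{C}_n)=\widehat{C}_n$ holds — a step the paper simply asserts.
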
{}
\begin{proof}{}
  Proceeding analogous to the proof of Theorem \ref{NonCont.dInf.Lemma} and using the fact that empirical copulas are 
  invariant under $\psi$ and converge to the true copula with respect to $d_\infty$ yields the result. 
 \end{proof}{}

Again using convex combinations it is straightforward to verify that the set of all $C \in \mathcal{C}^d_c$ that 
are not universally simplified is dense in $(\mathcal{C}^d_c,d_\infty)$ - Theorem \ref{NonCont.dInf.Lemma.delta} has the following consequence: 
\begin{corollary}{} \label{discont.everywhere.delta}
  The mapping $\psi: \CC^d_{\rm c} \to \CC^d_{\rm c}$ is discontinuous on a dense subset of $(\mathcal{C}^d_c,d_\infty)$. 
\end{corollary}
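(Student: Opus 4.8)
The plan is to deduce the corollary from Theorem \ref{NonCont.dInf.Lemma.delta}: that theorem says every $C$ with $d_\infty(C,\psi(C))\neq 0$ is a discontinuity point of $\psi$, so it suffices to produce a $d_\infty$-dense subset of $\CC^d_{\rm c}$ consisting of copulas that are \emph{not} fixed points of $\psi$ (equivalently, copulas which fail to be simplified with respect to the underlying $D$-vine, so that $\psi(C)\neq C$). Thus the whole task reduces to showing that the non-$\psi$-fixed copulas are $d_\infty$-dense in $(\CC^d_{\rm c},d_\infty)$.

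For the construction, fix an arbitrary $C_0\in\CC^d_{\rm c}$ and $\varepsilon>0$. Since empirical copulas are $d_\infty$-dense (\cite[Proposition 3.2]{durantefs2010}), I would choose an empirical copula $\hat{C}_n$ with $d_\infty(\hat{C}_n,C_0)<\varepsilon/2$; by Theorem \ref{D.Emp.Subset.S} it is universally simplified, hence invariant under $\psi$. Let $C^\ast$ be the copula of Example \ref{D.SVC.PInd.3}, for which $\psi(C^\ast)=\Pi\neq C^\ast$. For $\alpha\in(0,1)$ I then set $C_\alpha:=(1-\alpha)\hat{C}_n+\alpha\,C^\ast$. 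Each $C_\alpha$ is a copula, and since both summands are absolutely continuous, so is $C_\alpha$; in particular $C_\alpha\in\CC^d_{\rm c}$. Moreover $d_\infty(C_\alpha,\hat{C}_n)=\alpha\,d_\infty(\hat{C}_n,C^\ast)\to 0$ as $\alpha\to 0$, so for $\alpha$ small enough $d_\infty(C_\alpha,C_0)<\varepsilon$. It then only remains to verify $\psi(C_\alpha)\neq C_\alpha$ for such small $\alpha$.

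The key step is exactly this non-fixedness. Because $\hat{C}_n$ and $C^\ast$ share a uniform one-dimensional margin on the conditioning coordinate $2$, the Markov kernel of the $\{1,2,3\}$-marginal of $C_\alpha$, conditioned on coordinate $2$, is the convex combination $(1-\alpha)K_{\hat{C}_n}+\alpha K_{C^\ast}$. The empirical part contributes the independence copula to the conditional copula of $(1,3)$ for every conditioning value, whereas the $C^{\rm Cube}$-part contributes the two \emph{distinct} checkerboard copulas $A^1$ on $\bigl(0,\tfrac12\bigr)$ and $A^2$ on $\bigl(\tfrac12,1\bigr)$ with $A^1\neq A^2$ (Example \ref{SVC.PInd}). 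Re-extracting the conditional copula via Sklar's theorem, this $A^1$-versus-$A^2$ discrepancy cannot be cancelled by the conditioning-independent empirical contribution when $\alpha>0$, so the conditional copula of $C_\alpha$ genuinely depends on the conditioning variable; hence $C_\alpha$ is non-simplified and $\psi(C_\alpha)\neq C_\alpha$. Applying Theorem \ref{NonCont.dInf.Lemma.delta}, each such $C_\alpha$ is a discontinuity point of $\psi$ lying within $\varepsilon$ of $C_0$, and since $C_0,\varepsilon$ were arbitrary the discontinuity points are dense.

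The main obstacle is precisely the non-cancellation claim of the last paragraph. Since $\psi$ is nonlinear (the conditional copula is obtained from the kernel by a Sklar renormalisation through the conditional margins), one \emph{cannot} simply write $\psi(C_\alpha)=(1-\alpha)\hat{C}_n+\alpha\,\Pi$ and read off $\psi(C_\alpha)\neq C_\alpha$; instead the variation of the extracted conditional copula across the two halves of the conditioning range must be pinned down rigorously. The cleanest way to discharge this would be to exhibit a concrete evaluation point (or a fixed dependence functional such as Spearman's $\rho$ of the conditional copula) whose value differs on $\bigl(0,\tfrac12\bigr)$ and $\bigl(\tfrac12,1\bigr)$ for all sufficiently small $\alpha>0$, which turns the heuristic into an explicit, elementary estimate.
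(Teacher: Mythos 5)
Your overall strategy is exactly the paper's: the corollary is deduced from Theorem \ref{NonCont.dInf.Lemma.delta} by exhibiting a $d_\infty$-dense set of copulas that are not fixed points of $\psi$, and that dense set is produced by convex combinations of a dense family of $\psi$-invariant copulas with a fixed non-simplified one. You are in fact slightly more careful than the paper's one-line justification in one respect: you correctly insist on density of the set $\{C:\psi(C)\neq C\}$ rather than of the set of non-universally-simplified copulas (a copula can fail to be universally simplified and still be a fixed point of the partial $D$-vine map, since $C^{\rm PVC}$ itself need only be simplified with respect to the $D$-vine structure).

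The genuine gap is the one you flag yourself: the claim that $\psi(C_\alpha)\neq C_\alpha$ for $C_\alpha=(1-\alpha)\hat C_n+\alpha C^\ast$ is asserted but not established, and with your choice of ingredients it is genuinely awkward to establish. The conditional $(1,3)$-marginals given coordinate $2$ of $\hat C_n$ are concentrated on intervals of length $1/n$ while those of $C^{\rm Cube}$ are uniform, so the conditional marginals of the mixture depend on $\alpha$, $n$ and the cell index, and the Sklar re-extraction of the conditional copula is not a convex combination of $\Pi$ with $A^1$ or $A^2$; ruling out cancellation then really does require the explicit pointwise evaluation you only sketch. The paper's device (Lemma \ref{lem:dense} together with Lemma \ref{lem:Delta(C)}) is designed precisely to avoid this: one approximates by a checkerboard $B_n$ and mixes it with a non-simplified checkerboard $E_n$ of the \emph{same resolution and the same relevant bivariate marginals}. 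Then the conditional marginals of the mixture coincide with those of $B_n$, the conditional copula of the mixture at $t$ is literally the convex combination of the conditional copulas of $B_n$ and $E_n$ at $t$, and its dependence on $t$ is certified by the strictly positive quantity $\Delta$. To close your argument you should either switch to that matched-marginal convex combination (replacing the empirical copula by a checkerboard approximation and choosing the perturbation with matching $(1,2)$- and $(2,3)$-marginals, so that non-simplifiedness of the $\{1,2,3\}$-marginal with respect to conditioning on coordinate $2$ — and hence $\psi(C_\alpha)\neq C_\alpha$ — can be read off), or actually carry out the concrete evaluation you propose at the end; as written, the key step remains a plausible heuristic rather than a proof.
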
{}

\begin{remark}{}
As a consequence of Remark \ref{Rem.SpanhelKurz}, 
all the results presented in Subsection \ref{subsect.PVC} 
(i.e., Theorems \ref{D.PVC.Gen.Dist.}, \ref{NonCont.dInf.Lemma.delta} and
Corollary \ref{discont.everywhere.delta}) 
remain true for the partial vine copula of $D$-vine structure as discussed in \citet[p.1264]{spanhel2019}.
It is worth mentioning that, although the latter construction principle sequentially minimizes the Kullback-Leibler 
divergence related to each tree, its outcome can be quite far away from the data generating 
copula (see Theorem \ref{D.PVC.Gen.Dist.}) which is in line with Theorem \ref{Non.Dense.TV}.
\end{remark}



\appendix
\section{Supplementary material}

\begin{lemma}{} \label{App.1} \leavevmode
\begin{onelist}
\item 
Suppose that $F,F_1,F_2,\ldots$ are univariate distribution functions and suppose that $F$ is continuous. 
Then weak convergence $F_n \to F$ implies uniform convergence. 
\item 
Suppose that $F,F_1,F_2,\ldots$ are $d$-dimensional distribution functions ($d\geq 2$) and suppose that $F$ is continuous. 
 Then weak convergence $F_n \to F$ implies uniform convergence. 
\end{onelist}
\end{lemma}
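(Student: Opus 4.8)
The plan is to recognise Lemma \ref{App.1} as the classical P\'olya(--Cantelli) theorem together with its multivariate extension, and to prove both parts by a ``finite grid plus monotonicity'' argument. The crucial preliminary observation is that weak convergence to a \emph{continuous} limit forces pointwise convergence at \emph{every} point, since the set of continuity points of $F$ is then the whole space; so throughout I may use $F_n(\mathbf{x})\to F(\mathbf{x})$ for all $\mathbf{x}$.

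For part (1) I would fix $\varepsilon>0$ and choose an integer $k>1/\varepsilon$. As $F$ is continuous with $\lim_{x\to-\infty}F(x)=0$ and $\lim_{x\to+\infty}F(x)=1$, the intermediate value theorem supplies nodes $x_1<\dots<x_{k-1}$ with $F(x_j)=j/k$, so $F$ increases by less than $\varepsilon$ between consecutive nodes (and below $x_1$, above $x_{k-1}$). Pointwise convergence at the finitely many nodes yields $N$ with $|F_n(x_j)-F(x_j)|<\varepsilon$ for all $n\ge N$. For arbitrary $x$ lying in a cell $[x_j,x_{j+1}]$ I would sandwich via monotonicity, $F_n(x_j)\le F_n(x)\le F_n(x_{j+1})$ and $F(x_j)\le F(x)\le F(x_{j+1})$, obtaining $|F_n(x)-F(x)|\le [F(x_{j+1})-F(x_j)]+\varepsilon<2\varepsilon$; the two unbounded cells are treated identically using $F(x_1)<\varepsilon$ and $1-F(x_{k-1})<\varepsilon$. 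Letting $\varepsilon\downarrow 0$ gives uniform convergence.

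For part (2) the same three ingredients---a finite grid, pointwise convergence at the grid, and coordinatewise monotonicity of distribution functions---are used, but one must first reduce to a compact box. The \textbf{main obstacle} is precisely this tail control: in dimension $d$ one can no longer place nodes at equally spaced quantiles, and one has to rule out mass escaping to infinity uniformly in $n$. I would obtain this from tightness: weak convergence $F_n\to F$ transfers to each one-dimensional margin $F_{n,i}\to F_i$, and a weakly convergent sequence of probability measures on $\mathbb{R}$ is tight, so there is $M>0$ with $F_{n,i}(-M)<\varepsilon$ and $1-F_{n,i}(M)<\varepsilon$ for every coordinate $i$ and every $n$ (and for $F$ itself). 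Using $F_n(\mathbf{x})\le F_{n,i}(x_i)$ I dispatch every $\mathbf{x}$ having some coordinate below $-M$ directly, while for the remaining $\mathbf{x}\notin[-M,M]^d$ I cap each coordinate exceeding $M$ down to $M$; the incurred error is at most $\sum_i\bigl(1-F_{n,i}(M)\bigr)<d\varepsilon$ for $F_n$, and likewise for $F$, so everything reduces to the box $[-M,M]^d$.

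On the box $F$ is uniformly continuous, so I would pick a grid fine enough that the oscillation of $F$ over each cell is below $\varepsilon$, invoke pointwise convergence at the finitely many grid points to fix $N$, and then interpolate: for $\mathbf{x}$ in a cell with lower corner $\mathbf{a}$ and upper corner $\mathbf{b}$, componentwise monotonicity gives $F_n(\mathbf{a})\le F_n(\mathbf{x})\le F_n(\mathbf{b})$ and $F(\mathbf{a})\le F(\mathbf{x})\le F(\mathbf{b})$, whence $|F_n(\mathbf{x})-F(\mathbf{x})|\le [F(\mathbf{b})-F(\mathbf{a})]+\max_{\text{corners}}|F_n-F|<2\varepsilon$. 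Combining the box estimate with the tail estimate bounds $\sup_{\mathbf{x}}|F_n(\mathbf{x})-F(\mathbf{x})|$ by a fixed multiple of $\varepsilon$ for all large $n$, proving the claim. I expect the only genuinely delicate point to be the uniform-in-$n$ tightness step; the uniform-continuity and interpolation parts are routine.
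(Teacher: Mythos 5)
Your proof is correct, but it follows a genuinely different route from the paper's. For part (1) you give the classical P\'olya argument in full (quantile grid plus monotonicity), which the paper simply declares well known and omits. For part (2) the paper does \emph{not} redo a grid argument in $\mathbb{R}^d$: it invokes Sklar's theorem to write $F_n=C_n\circ\mathbf{F}_n$ and $F=C\circ\mathbf{F}$, notes that continuity of $F$ forces pointwise (hence, by part (1), uniform) convergence of each univariate marginal, and then uses Lipschitz continuity of copulas to bound $|F_n(\mathbf{x})-F(\mathbf{x})|$ by the sum of the marginal errors. You instead prove the multivariate P\'olya theorem directly: reduce to a compact box via uniform-in-$n$ tightness of the marginals (controlling the capping error by $\sum_i(1-F_{n,i}(M))$), then use uniform continuity of $F$ on the box, pointwise convergence at finitely many grid corners, and coordinatewise monotonicity to sandwich. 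Each step of yours is sound; the tightness step you flag as delicate is handled correctly. What the two approaches buy: the paper's reduction is shorter and reuses the copula machinery already present, but as displayed its key inequality suppresses the term $d_\infty(C_n,C)$ that arises when the copulas of $F_n$ and $F$ differ (that term is controlled only in the companion Lemma~\ref{App.2}, whose proof in turn cites Lemma~\ref{App.1}); your direct argument is more elementary, entirely self-contained, and avoids any such dependence, at the cost of the bookkeeping with tails and grids.
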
{}

\begin{proof}{}
Since the first statement is well-known and straightforward to verify we focus on the second assertion. 
Considering that $F$ is continuous the sequence $(F_n)_{n \in \mathbb{N}}$ converges pointwise to $F$ and the same holds true for all univariate marginals. Using Sklar's Theorem, Lipschitz continuity of copulas and statement (1) we get 
$$
  \big| F_n(\xxx) - F(\xxx) \big|
	\leq \sum_{i=1}^d \big| (F_n)_i (x_i) - F_i (x_i) \big|
$$
for every $\xxx \in \R^d$, which completes the proof.
\end{proof}{}

\begin{lemma}{} \label{App.2}
Suppose that $F,F_1,F_2,\ldots$ are $d$-dimensional distribution functions with continuous marginals $(F)_i,(F_1)_i,(F_2)_i,\ldots$ 
($i \in \{1, \ldots,d\}$) and copulas $C, C_1,C_2,\ldots$, respectively. Then the following assertions hold: 
\begin{onelist}
\item
  If $C_n \to C$ uniformly and $(F_n)_i \to (F)_i$ weakly then $F_n \to F$ uniformly.

\item
  If $F_n \to F$ weakly then $C_n \to C$ uniformly.
\end{onelist}
\end{lemma}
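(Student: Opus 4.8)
The plan is to treat the two assertions separately, disposing of the direction (1) from copulas to distributions first and then handling the more delicate direction (2) via inversion of the margins.

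For assertion (1) I would start from the Sklar representations $F_n(\xxx) = C_n\big((F_n)_1(x_1),\dots,(F_n)_d(x_d)\big)$ and $F(\xxx) = C\big(F_1(x_1),\dots,F_d(x_d)\big)$ and insert an intermediate term by replacing $C_n$ with $C$ while retaining the arguments $(F_n)_i(x_i)$. The triangle inequality splits $|F_n(\xxx)-F(\xxx)|$ into a term bounded by $d_\infty(C_n,C)$ (uniformly in $\xxx$, by hypothesis) and a term $\big|C\big((F_n)_1(x_1),\dots\big)-C\big(F_1(x_1),\dots\big)\big|$. Exploiting the fact that every copula is $1$-Lipschitz in each coordinate, the latter is at most $\sum_{i=1}^d |(F_n)_i(x_i)-F_i(x_i)|$. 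Since each $F_i$ is continuous, Lemma \ref{App.1}(1) upgrades the assumed weak convergence $(F_n)_i\to F_i$ to uniform convergence, so every summand vanishes uniformly in $x_i$; both terms then tend to $0$ uniformly, proving the claim.

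For assertion (2) I would first note that weak convergence $F_n\to F$ forces weak convergence of each one-dimensional margin $(F_n)_i\to F_i$, which is again uniform by continuity of $F_i$ and Lemma \ref{App.1}(1). Since the family of all copulas is uniformly equicontinuous (each being $1$-Lipschitz), it suffices to verify pointwise convergence $C_n(\uuu)\to C(\uuu)$ on the dense subset $(0,1)^d$; uniform convergence on $\I^d$ then follows. So fix $\uuu\in(0,1)^d$. As $F_i$ is continuous with $\lim_{x\to-\infty}F_i=0$ and $\lim_{x\to+\infty}F_i=1$, the intermediate value theorem provides points $x_i$ with $F_i(x_i)=u_i$, whence $C(\uuu)=F(\xxx)$. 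Writing $\vvv_n:=\big((F_n)_1(x_1),\dots,(F_n)_d(x_d)\big)$, Sklar gives $C_n(\vvv_n)=F_n(\xxx)$, and since $\xxx$ is a continuity point of $F$ we obtain $F_n(\xxx)\to F(\xxx)=C(\uuu)$, while $\vvv_n\to\uuu$ componentwise by uniform convergence of the margins.

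The decisive step is to pass from convergence $C_n(\vvv_n)\to C(\uuu)$ at the moving arguments $\vvv_n$ to convergence $C_n(\uuu)\to C(\uuu)$ at the fixed point $\uuu$, and this is exactly where the $1$-Lipschitz property does the work: $|C_n(\uuu)-C_n(\vvv_n)|\le\sum_{i=1}^d|u_i-(F_n)_i(x_i)|\to 0$, so the triangle inequality together with $|C_n(\vvv_n)-C(\uuu)|\to 0$ yields $C_n(\uuu)\to C(\uuu)$. I expect this inversion argument in (2) to be the main obstacle: unlike (1), it requires \emph{solving} $F_i(x_i)=u_i$ — precisely what continuity of the margins of $F$ guarantees — and then controlling the copula at the perturbed arguments $\vvv_n$ rather than at $\uuu$ directly.
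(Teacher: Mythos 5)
Your proof of assertion (1) is correct and coincides with the paper's: the same insertion of the intermediate term $C\circ\mathbf{F}_n$, the same appeal to $1$-Lipschitz continuity, and the same use of Lemma \ref{App.1}(1) to upgrade weak to uniform convergence of the margins. For assertion (2) your argument is also correct but follows a genuinely different route. The paper works with quasi-inverses: writing $C_n=F_n\circ\mathbf{F}_n^\leftarrow$ and $C=F\circ\mathbf{F}^\leftarrow$, it inserts $F_n\circ\mathbf{F}^\leftarrow$ as the intermediate term and, using $(F_n)_i\circ(F_n)_i^\leftarrow=\mathrm{id}$ (valid by continuity of the margins), arrives at the explicit uniform bound $|C_n(\uuu)-C(\uuu)|\leq\sum_{i=1}^d d_\infty\big((F_n)_i,(F)_i\big)+\sup_{\xxx}|F_n(\xxx)-F(\xxx)|$, whose right-hand side tends to $0$ by Lemma \ref{App.1}. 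You instead avoid quasi-inverses entirely: at interior points $\uuu\in(0,1)^d$ you solve $F_i(x_i)=u_i$ exactly by the intermediate value theorem, compare $C_n$ at the fixed point $\uuu$ with $C_n$ at the moving point $\vvv_n=\mathbf{F}_n(\xxx)$ via the Lipschitz property, and then pass from pointwise convergence on the dense set $(0,1)^d$ to uniform convergence by equicontinuity of the copula family. Your version is slightly more economical in its inputs — it needs only pointwise convergence $F_n(\xxx)\to F(\xxx)$ at the chosen continuity points rather than the full multivariate uniform convergence of Lemma \ref{App.1}(2), and it sidesteps the boundary technicalities of $F_i^\leftarrow$ at $u_i\in\{0,1\}$ — whereas the paper's version buys a clean quantitative estimate valid simultaneously for all $\uuu\in\I^d$ without a separate density-plus-equicontinuity step. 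Both are complete proofs.
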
{}

\begin{proof}{}
  Since the limits are continuous by assumption, according to Lemma \ref{App.1} weak and uniform convergence coincide. 
  We start with proving the first assertion and consider some $\xxx \in \R^d$. 
  Then Lipschitz continuity of copulas and the triangle inequality yield (we write ${\bf F}_n:=((F_n)_1,(F_n)_2,\ldots, (F_n)_d)$)
\begin{eqnarray*}
  \big| F_n(\xxx) - F(\xxx) \big|
	&   =  & \big| (C_n \circ {\bf F}_n) (\xxx) - (C \circ {\bf F}) (\xxx) \big|
	\\
	& \leq & \big| (C_n \circ {\bf F}_n) (\xxx) - (C \circ {\bf F}_n) (\xxx) \big|
	         + \big| (C \circ {\bf F}_n) (\xxx) - (C \circ {\bf F}) (\xxx) \big|
	\\
	& \leq & d_\infty \big( C_n, C\big)
	         + \sum_{i=1}^d \big| (F_n)_i (x_i) - (F)_i (x_i) \big|,
\end{eqnarray*}
  from which the first assertion follows immediately. \\
  To prove the second assertion fix $\uuu \in \I^d$. Letting $(F_n)_i^\leftarrow$ denote the quasi-inverse of $(F_n)_i$ and letting 
  ${\bf F}_n^\leftarrow$ accordingly denote the vector of quasi-inverses of the univariate marginals yields  
\begin{eqnarray*}
  \lefteqn{\big| C_n(\uuu) - C(\uuu) \big|}
	\\
	&   =  & \big| (F_n \circ {\bf F}_n^\leftarrow) (\uuu) - (F \circ {\bf F}^\leftarrow) (\uuu) \big|
	\\
	& \leq & \big| (F_n \circ {\bf F}_n^\leftarrow) (\uuu) - (F_n \circ {\bf F}^\leftarrow) (\uuu) \big|
					 + \big| (F_n \circ {\bf F}^\leftarrow) (\uuu) - (F \circ {\bf F}^\leftarrow) (\uuu) \big|
	\\
	& \leq & \sum_{i=1}^{d} \big| \big( (F_n)_i \circ (F_n)_i^\leftarrow \big) (u_i) - \big( (F_n)_i \circ (F)_i^\leftarrow \big) (u_i) \big|
					 + \big| F_n \big( {\bf F}^\leftarrow (\uuu) \big) - F \big( {\bf F}^\leftarrow (\uuu) \big) \big|
	\\
	&   =  & \sum_{i=1}^{d} \big| u_i - \big( (F_n)_i \circ (F)_i^\leftarrow \big) (u_i) \big|
					 + \big| F_n \big( {\bf F}^\leftarrow (\uuu) \big) - F \big( {\bf F}^\leftarrow (\uuu) \big) \big|
	\\
	&   =  & \sum_{i=1}^{d} \big| (F)_i \big( (F)_i^\leftarrow (u_i) \big) - (F_n)_i \big( (F)_i^\leftarrow (u_i) \big) \big|
					 + \big| F_n \big( {\bf F}^\leftarrow (\uuu) \big) - F \big( {\bf F}^\leftarrow (\uuu) \big) \big|
	\\
	&   \leq & \sum_{i=1}^d d_\infty \big( (F_n)_i, (F)_i \big)
					 + \sup_{\mathbf{x} \in \mathbb{R}^d} \vert  F_n(\mathbf{x}) - F(\mathbf{x}) \vert.
\end{eqnarray*}
  This completes the proof. 
\end{proof}{}

\begin{lemma}{} \label{App.3}
Suppose that $C \in \CC_{\rm ac}^3$ is an absolutely continuous copula, 
and let $c_{13},c_{23}$ denote the densities of the marginal copulas $C_{13},C_{23}$ of $C$, respectively. 
Then the following inequality holds for every $\tilde{C} \in \CC_{\rm c}^3$: 
\begin{eqnarray*}
  \lefteqn{\int\limits_{\I^2} \int\limits_{\I} 
				 \big| C_{12;3}^t (\sss) - \tilde{C}_{12;3}^t (\sss) \big|
				 \; \mathrm{d} \lambda(t) \mathrm{d} \lambda^2 (\sss)}
	\\
	& \leq & \int\limits_{\I} \int\limits_{\I^2} 
				\Big| K_{C} \big( t, [{\bf 0},\sss] \big) - K_{\tilde{C}} \big( t, [{\bf 0},\sss] \big) \Big| \;
				\Big( c_{13} (s_1,t) \, c_{23} (s_2,t) \Big) 
				\; \mathrm{d} \lambda^2 (\sss) \mathrm{d} \lambda(t) 
  \\
	&      & + \int\limits_{\I} \int\limits_{\I} 
				\big| F_{1|3} (s_1|t) - \tilde{F}_{1|3} (s_1|t) \big| \, c_{13} (s_1,t)
				   \; \mathrm{d} \lambda (s_1) \mathrm{d} \lambda(t) 
	\\
	&      & + \int\limits_{\I} \int\limits_{\I} 
				\big| F_{2|3} (s_2|t) - \tilde{F}_{2|3} (s_2|t) \big| \, c_{23} (s_2,t)
				   \; \mathrm{d} \lambda (s_2) \mathrm{d} \lambda(t) 
\end{eqnarray*}
\end{lemma}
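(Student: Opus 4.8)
The plan is to reduce the assertion to a pointwise triangle inequality in the original coordinates, combined with a change of variables that transfers the integral over the copula arguments $\sss$ into an integral over the original coordinates weighted by the densities $c_{13}$ and $c_{23}$. I would begin by recording the two facts that drive everything. First, Sklar's theorem applied to the Markov kernels (as in (\ref{Eq.Cop.1})) gives, for $\lambda$-almost every $t\in\I$ and all $\uuu\in\I^2$, the identities $K_C(t,[\nul,\uuu]) = C_{12;3}^t\big(F_{1|3}(u_1|t),F_{2|3}(u_2|t)\big)$ and $K_{\tilde C}(t,[\nul,\uuu]) = \tilde C_{12;3}^t\big(\tilde F_{1|3}(u_1|t),\tilde F_{2|3}(u_2|t)\big)$, where the conditional copulas are unique because $C,\tilde C\in\CC_{\rm c}^3$. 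Second, since $C$ is absolutely continuous and the conditioning marginal of the bivariate copula $C_{13}$ is uniform, the conditional marginal is recovered from the density via $F_{1|3}(u_1|t)=\int_{[0,u_1]} c_{13}(x,t)\,\mathrm{d}\lambda(x)$ (and analogously for $F_{2|3}$), which follows from (\ref{Eq.Cop.Margins.1}) together with disintegration of $C_{13}$.

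Next I would perform the change of variables. Fix a $t$ for which the above hold and consider the map $\Phi_t(\uuu):=\big(F_{1|3}(u_1|t),F_{2|3}(u_2|t)\big)$. Because $F_{1|3}(\cdot|t)$ is the distribution function associated with the density $c_{13}(\cdot,t)$ on $\I$ and $F_{1|3}(1|t)=1$, the push-forward of the measure $c_{13}(u_1,t)\,\mathrm{d}\lambda(u_1)$ under $F_{1|3}(\cdot|t)$ is Lebesgue measure on $\I$; the same holds in the second coordinate. Hence the push-forward of $c_{13}(u_1,t)\,c_{23}(u_2,t)\,\mathrm{d}\lambda^2(\uuu)$ under $\Phi_t$ equals $\lambda^2$, and the inner integral of the left-hand side transforms as
\begin{equation*}
\int_{\I^2}\big|C_{12;3}^t(\sss)-\tilde C_{12;3}^t(\sss)\big|\,\mathrm{d}\lambda^2(\sss)
=\int_{\I^2}\big|C_{12;3}^t(\Phi_t(\uuu))-\tilde C_{12;3}^t(\Phi_t(\uuu))\big|\,c_{13}(u_1,t)\,c_{23}(u_2,t)\,\mathrm{d}\lambda^2(\uuu).
\end{equation*}
The point of this transformation is that $C_{12;3}^t(\Phi_t(\uuu))$ is exactly $K_C(t,[\nul,\uuu])$ by the first Sklar identity.

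I would then bound the integrand pointwise in $\uuu$. Writing $\tilde\Phi_t(\uuu):=\big(\tilde F_{1|3}(u_1|t),\tilde F_{2|3}(u_2|t)\big)$, inserting $K_{\tilde C}(t,[\nul,\uuu])=\tilde C_{12;3}^t(\tilde\Phi_t(\uuu))$ and using the triangle inequality yields
\begin{equation*}
\big|C_{12;3}^t(\Phi_t(\uuu))-\tilde C_{12;3}^t(\Phi_t(\uuu))\big|
\leq \big|K_C(t,[\nul,\uuu])-K_{\tilde C}(t,[\nul,\uuu])\big|
+\big|\tilde C_{12;3}^t(\tilde\Phi_t(\uuu))-\tilde C_{12;3}^t(\Phi_t(\uuu))\big|.
\end{equation*}
The second summand is controlled, via the $1$-Lipschitz property of the copula $\tilde C_{12;3}^t$, by $\big|F_{1|3}(u_1|t)-\tilde F_{1|3}(u_1|t)\big|+\big|F_{2|3}(u_2|t)-\tilde F_{2|3}(u_2|t)\big|$. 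Integrating these three pieces against $c_{13}(u_1,t)\,c_{23}(u_2,t)\,\mathrm{d}\lambda^2(\uuu)\,\mathrm{d}\lambda(t)$ produces exactly the three terms on the right-hand side: the first piece is the first term verbatim (after renaming $\uuu$ to $\sss$), and in the two marginal pieces the spare coordinate integrates out via $\int_\I c_{13}(u_1,t)\,\mathrm{d}\lambda(u_1)=1=\int_\I c_{23}(u_2,t)\,\mathrm{d}\lambda(u_2)$.

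I expect the main obstacle to be making the change of variables fully rigorous: one must argue the push-forward identity for $\lambda$-almost every $t$ simultaneously (measurability of the relevant maps in $t$ together with an application of Tonelli's theorem), and the non-injectivity of $F_{1|3}(\cdot|t)$ on sets where $c_{13}(\cdot,t)$ vanishes must be absorbed by phrasing the substitution as a push-forward of measures rather than as a classical diffeomorphic change of variables. This is precisely where absolute continuity of $C$ is indispensable, since it guarantees that $F_{i|3}(\cdot|t)$ carries the conditional density $c_{i3}(\cdot,t)$ to Lebesgue measure; the remaining steps are then routine applications of the triangle inequality and the Lipschitz bound for copulas.
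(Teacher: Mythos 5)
Your proposal is correct and follows essentially the same route as the paper's proof: the same Sklar identities for the Markov kernels, the same three-term split via the triangle inequality and Lipschitz continuity of $\tilde{C}_{12;3}^t$, and the same change of variables introducing $c_{13}\,c_{23}$. The only (cosmetic) difference is the direction of that substitution — you push the density-weighted measure forward under $(F_{1|3},F_{2|3})$ to recover $\lambda^2$, whereas the paper pushes $\lambda^2$ forward under the quasi-inverses $(F_{1|3}^{\leftarrow},F_{2|3}^{\leftarrow})$ and identifies its density as $c_{13}\,c_{23}$.
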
{}
\begin{proof}{}
For $\tilde{C} \in \CC_{\rm c}^3$ and $C \in \CC_{\rm ac}^3$ we have 
\begin{eqnarray*}
  \lefteqn{\int\limits_{\I^2} \int\limits_{\I} 
				 \big| C_{12;3}^t (\sss) - \tilde{C}_{12;3}^t (\sss) \big|
				 \; \mathrm{d} \lambda(t) \mathrm{d} \lambda^2 (\sss)}
	\\
	&   =  & \int\limits_{\I^2} \int\limits_{\I} 
				\Big| K_{C} \big( t, \big[0,F^{\leftarrow}_{1|3} (s_1|t)\big] \times \big[0,F^{\leftarrow}_{2|3} (s_2|t) \big] \big)
	\\
	&      & \qquad \qquad - \, K_{\tilde{C}} \big( t, \big[0,\tilde{F}^{\leftarrow}_{1|3} (s_1|t)\big] \times \big[0,\tilde{F}^{\leftarrow}_{2|3} (s_2|t) \big] \big) \Big| 
				\; \mathrm{d} \lambda(t) \mathrm{d} \lambda^2 (\sss)
	\\
	& \leq & \int\limits_{\I^2} \int\limits_{\I} 
				\Big| K_{C} \big( t, \big[0,F^{\leftarrow}_{1|3} (s_1|t)\big] \times \big[0,F^{\leftarrow}_{2|3} (s_2|t) \big] \big)
	\\
	&      & \qquad \qquad - \, K_{\tilde{C}} \big( t, \big[0,F^{\leftarrow}_{1|3} (s_1|t)\big] \times \big[0,F^{\leftarrow}_{2|3} (s_2|t) \big] \big) \Big| 
				\; \mathrm{d} \lambda(t) \mathrm{d} \lambda^2 (\sss)
  \\
	&      &	+ \int\limits_{\I^2} \int\limits_{\I} 
				\Big| K_{\tilde{C}} \big( t, \big[0,F^{\leftarrow}_{1|3} (s_1|t)\big] \times \big[0,F^{\leftarrow}_{2|3} (s_2|t) \big] \big)
	\\
	&      & \qquad \qquad - \, K_{\tilde{C}} \big( t, \big[0,\tilde{F}^{\leftarrow}_{1|3} (s_1|t)\big] \times \big[0,\tilde{F}^{\leftarrow}_{2|3} (s_2|t) \big] \big) \Big| 
				\; \mathrm{d} \lambda(t) \mathrm{d} \lambda^2 (\sss)
	\\*
	&   =: & I_1 + I_2.
\end{eqnarray*}
For every $t \in \I$ define $T^t: \I^2 \to \I^2$ by  
$$ 
T^t (\sss) := \big( F^{\leftarrow}_{1|3} (s_1|t), F^{\leftarrow}_{2|3} (s_2|t)\big). 
$$
Then $T^t$ is measurable, obviously satisfies $(T^t)^{-1} (\I^2)=\I^2$, and 
\begin{eqnarray*}
  (\lambda^2)^{T^t} \big( [{\bf 0},\uuu] \big)
	& = & \lambda^2 \big( \big\{ \sss \in \I^2 \, : \, T^t (\sss) \in [{\bf 0},\uuu] \big\} \big)
	\\
	& = & \lambda^2 \big( \big\{ \sss \in \I^2 \, : \, s_1 \leq F_{1|3} (u_1|t), s_2 \leq F_{2|3} (u_2|t) \big\} \big)
	\\
	& = & F_{1|3} (u_1|t) \, F_{2|3} (u_2|t)
	\\
	& = & \int\limits_{[{\bf 0},\uuu]} c_{13} (a_1,t) \, c_{23} (a_2,t) 
				\; \mathrm{d} \lambda^2 (\aaa)
\end{eqnarray*}
for every $\uuu \in \I^2$, implying that $(\lambda^2)^{T^t}$ is absolutely continuous with density 
$(a_1,a_2) \mapsto c_{13} (a_1,t) \, c_{23} (a_2,t)$. 
This yields
\begin{eqnarray*}
  I_1
	&   =  & \int\limits_{\I} \int\limits_{\I^2} 
				\Big| K_{C} \big( t, \big[0,F^{\leftarrow}_{1|3} (s_1|t)\big] \times \big[0,F^{\leftarrow}_{2|3} (s_2|t) \big] \big)
	\\
	&      & \qquad \qquad - \, K_{\tilde{C}} \big( t, \big[0,F^{\leftarrow}_{1|3} (s_1|t)\big] \times \big[0,F^{\leftarrow}_{2|3} (s_2|t) \big] \big) \Big| 
				\; \mathrm{d} \lambda^2 (\sss) \mathrm{d} \lambda(t) 
 \\
	&   =  & \int\limits_{\I} \int\limits_{\I^2} 
				\big| K_{C} \big( t, [{\bf 0},\sss] \big) - K_{\tilde{C}} \big( t, [{\bf 0},\sss] \big) \big| 
				\; \mathrm{d} (\lambda^2)^{T^t} (\sss) \mathrm{d} \lambda (t)
	\\
	&   =  & \int\limits_{\I} \int\limits_{\I^2} 
				\Big| K_{C} \big( t, [{\bf 0},\sss] \big) - K_{\tilde{C}} \big( t, [{\bf 0},\sss] \big) \Big| \;
				\Big( c_{13} (s_1,t) \, c_{23} (s_2,t) \Big) 
				\; \mathrm{d} \lambda^2 (\sss) \mathrm{d} \lambda(t). 
\end{eqnarray*}
Focusing on $I_2$, using Sklar's theorem, Lipschitz continuity, and a similar argument as before yields
\begin{eqnarray*}
  I_2
	&   =  & \int\limits_{\I} \int\limits_{\I^2} 
				\Big| K_{\tilde{C}} \big( t, \big[0,F^{\leftarrow}_{1|3} (s_1|t)\big] \times \big[0,F^{\leftarrow}_{2|3} (s_2|t) \big] \big)
	\\
	&      & \qquad \qquad - \, K_{\tilde{C}} \big( t, \big[0,\tilde{F}^{\leftarrow}_{1|3} (s_1|t)\big] \times \big[0,\tilde{F}^{\leftarrow}_{2|3} (s_2|t) \big] \big) \Big| 
				\; \mathrm{d} \lambda^2 (\sss) \mathrm{d} \lambda(t) 
	\\
	& \leq & \int\limits_{\I} \int\limits_{\I} 
				\big| \tilde{F}_{1|3} \big( F^{\leftarrow}_{1|3} (s_1|t) \big| t \big) 
							- \tilde{F}_{1|3} \big( \tilde{F}^{\leftarrow}_{1|3} (s_1|t) \big| t \big) \big| 
				   \; \mathrm{d} \lambda (s_1) \mathrm{d} \lambda(t) 
	\\*
	&      & + \int\limits_{\I} \int\limits_{\I} 
				\big| \tilde{F}_{2|3} \big( F^{\leftarrow}_{2|3} (s_2|t) \big| t \big) 
							- \tilde{F}_{2|3} \big( \tilde{F}^{\leftarrow}_{2|3} (s_2|t) \big| t \big) \big| 
				   \; \mathrm{d} \lambda (s_2) \mathrm{d} \lambda(t) 
	\\
	&   =  & \int\limits_{\I} \int\limits_{\I} 
				\big| \tilde{F}_{1|3} \big( F^{\leftarrow}_{1|3} (s_1|t) \big| t \big) - s_1 \big| 
				   \; \mathrm{d} \lambda (s_1) \mathrm{d} \lambda(t) 
	\\
	&      & + \int\limits_{\I} \int\limits_{\I} 
				\big| \tilde{F}_{2|3} \big( F^{\leftarrow}_{2|3} (s_2|t) \big| t \big) - s_2 \big| 
				   \; \mathrm{d} \lambda (s_2) \mathrm{d} \lambda(t) 
	\\
	&   =  & \int\limits_{\I} \int\limits_{\I} 
				\big| \tilde{F}_{1|3} \big( F^{\leftarrow}_{1|3} (s_1|t) \big| t \big) - F_{1|3} \big( F^{\leftarrow}_{1|3} (s_1|t) \big| t \big) \big| 
				   \; \mathrm{d} \lambda (s_1) \mathrm{d} \lambda(t) 
	\\
	&      & + \int\limits_{\I} \int\limits_{\I} 
				\big| \tilde{F}_{2|3} \big( F^{\leftarrow}_{2|3} (s_2|t) \big| t \big) - F_{2|3} \big( F^{\leftarrow}_{2|3} (s_2|t) \big| t \big) \big| 
				   \; \mathrm{d} \lambda (s_2) \mathrm{d} \lambda(t) 
	\\
	&   =  & \int\limits_{\I} \int\limits_{\I} 
				\big| \tilde{F}_{1|3} (s_1|t) - F_{1|3} (s_1|t) \big| \, c_{13} (s_1,t)
				   \; \mathrm{d} \lambda (s_1) \mathrm{d} \lambda(t) 
	\\
	&      & + \int\limits_{\I} \int\limits_{\I} 
				\big| \tilde{F}_{2|3} (s_2|t) - F_{2|3} (s_2|t) \big| \, c_{23} (s_2,t)
				   \; \mathrm{d} \lambda (s_2) \mathrm{d} \lambda(t), 
\end{eqnarray*}
and the proof is complete.
\end{proof}{}

As a direct consequence of Lemma \ref{App.3} we obtain the following result:
\begin{lemma}\label{lem_Jn}
Suppose that $C \in \CC_{\rm ac}^3$ is an absolutely continuous copula whose density $c$ 
fulfills $c \leq a \in [1,\infty)$. Then the inequality
$$
  J(C,\tilde{C})
	 :=  \int_{\I^3} \vert C^t_{12;3}(\sss) - \tilde{C}^t_{12;3}(\sss) \vert 
	     \; \mathrm{d} \lambda^3(\sss,t) 
	\leq a (2+a) D_\infty(C,\tilde{C})
$$
holds for every $\tilde{C} \in \CC_{\rm c}^3$.
\end{lemma}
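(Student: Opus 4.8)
The plan is to invoke Lemma \ref{App.3} and then estimate each of its three resulting integrals separately against $D_\infty(C,\tilde{C})$. First I would note that, by Fubini's theorem, $J(C,\tilde{C})$ coincides with the left-hand side of Lemma \ref{App.3}, so that $J(C,\tilde{C}) \leq I_1 + I_2 + I_3$, where $I_1,I_2,I_3$ denote the three integrals appearing on the right-hand side of that lemma. The key elementary observation is that the marginal densities are pointwise bounded by $a$: indeed $c_{13}(s_1,t) = \int_\I c(s_1,s_2,t)\,\mathrm{d}\lambda(s_2) \leq a$ because $c \leq a$ and $\lambda(\I)=1$, and analogously $c_{23}(s_2,t) \leq a$.

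For the first integral I would bound the product $c_{13}(s_1,t)\,c_{23}(s_2,t) \leq a^2$ and pull it out, leaving $\int_\I\int_{\I^2} \big| K_C(t,[\nul,\sss]) - K_{\tilde{C}}(t,[\nul,\sss]) \big|\,\mathrm{d}\lambda^2(\sss)\,\mathrm{d}\lambda(t)$. Interchanging the order of integration and using the defining supremum of $D_\infty$, so that $\int_\I \big| K_C(t,[\nul,\sss]) - K_{\tilde{C}}(t,[\nul,\sss]) \big|\,\mathrm{d}\lambda(t) \leq D_\infty(C,\tilde{C})$ for every fixed $\sss \in \I^2$, together with $\lambda^2(\I^2)=1$, yields $I_1 \leq a^2\, D_\infty(C,\tilde{C})$.

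For the second and third integrals I would first extract the factor $c_{13} \leq a$ (respectively $c_{23} \leq a$) and then identify the conditional univariate distribution functions with Markov-kernel values via Equation (\ref{Eq.Cop.Margins.1}): $F_{1|3}(s_1|t) = K_C(t,[0,s_1]\times\I) = K_C(t,[\nul,(s_1,1)])$, and likewise for $\tilde{F}_{1|3}$. Hence $\int_\I \big| F_{1|3}(s_1|t) - \tilde{F}_{1|3}(s_1|t) \big|\,\mathrm{d}\lambda(t) \leq D_\infty(C,\tilde{C})$ for every fixed $s_1$ (choosing $\uuu=(s_1,1)$ in the supremum defining $D_\infty$), and integrating over $s_1 \in \I$ gives $I_2 \leq a\, D_\infty(C,\tilde{C})$; the symmetric argument with $\uuu = (1,s_2)$ yields $I_3 \leq a\, D_\infty(C,\tilde{C})$. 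Summing the three bounds produces $J(C,\tilde{C}) \leq (a^2 + 2a)\,D_\infty(C,\tilde{C}) = a(2+a)\,D_\infty(C,\tilde{C})$.

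Since everything reduces to the already-established Lemma \ref{App.3}, there is no genuine obstacle; the only points requiring care are the reduction of the marginal densities to the common bound $a$ and the repeated use of the supremum defining $D_\infty$ over the measure-one cube $\I^2$. The mildly delicate step is the treatment of $I_2$ and $I_3$, where one must recognize the conditional univariate distribution functions as Markov-kernel evaluations at the boundary points $(s_1,1)$ and $(1,s_2)$ in order to fit them into the $D_\infty$ framework.
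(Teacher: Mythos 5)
Your proof is correct and takes essentially the same route as the paper's: both apply Lemma \ref{App.3}, bound the marginal densities pointwise by $a$ (hence their product by $a^2$), and then control the three remaining integrals by $D_\infty(C,\tilde{C})$ --- the paper phrases this last step as $D_1\leq D_\infty$ combined with $D_\infty(C_{i3},\tilde{C}_{i3})\leq D_\infty(C,\tilde{C})$, which is precisely your direct supremum argument with $\uuu=(s_1,1)$ and $\uuu=(1,s_2)$. No gaps.
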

\begin{proof}{}
Applying Lemma \ref{App.3} and Theorem \ref{Thm.D1.TV} yields
\begin{eqnarray*}
  \lefteqn{\int_{\I^3} \vert C^t_{12;3}(\sss) - \tilde{C}^t_{12;3}(\sss) \vert \; \mathrm{d} \lambda^3(\sss,t)}
	\\
	& \leq & \int\limits_{\I} \int\limits_{\I^2} 
				\Big| K_{C} \big( t, [{\bf 0},\sss] \big) - K_{\tilde{C}} \big( t, [{\bf 0},\sss] \big) \Big| \;
				\Big( c_{13} (s_1,t) \, c_{23} (s_2,t) \Big) 
				\; \mathrm{d} \lambda^2 (\sss) \mathrm{d} \lambda(t) 
  \\
	&      & + \int\limits_{\I} \int\limits_{\I} 
				\big| F_{1|3} (s_1|t) - \tilde{F}_{1|3} (s_1|t) \big| \, c_{13} (s_1,t)
				   \; \mathrm{d} \lambda (s_1) \mathrm{d} \lambda(t) 
	\\
	&      & + \int\limits_{\I} \int\limits_{\I} 
				\big| F_{2|3} (s_2|t) - \tilde{F}_{2|3} (s_2|t) \big| \, c_{23} (s_2,t)
				   \; \mathrm{d} \lambda (s_2) \mathrm{d} \lambda(t) 
	\\
	& \leq & a^2 \, D_1 (C,\tilde{C}) 
					 + a \, D_1(C_{13},\tilde{C}_{13})
           + a \, D_1(C_{23},\tilde{C}_{23})
	\\
	& \leq & a^2 \, D_\infty (C,\tilde{C}) 
					 + a \, D_\infty(C_{13},\tilde{C}_{13})
           + a \, D_\infty(C_{23},\tilde{C}_{23})
	\\
	& \leq & a^2 \, D_\infty (C,\tilde{C}) 
					 + 2 a \, D_\infty(C,\tilde{C})
\end{eqnarray*}
This proves the assertion.
\end{proof}{}

Suppose that $C \in \CC^3$ is a checkerboard copula. Then $C \in \CC^3_{\rm ac}$. 
We will say that $C$ has resolution $N \geq 2$ if $N$ is the smallest integer such that (there is a version of) its density 
$c$ of $C$ is constant on each square of the form 
$(\frac{i_x-1}{N},\frac{i_x}{N}) \times (\frac{i_y-1}{N},\frac{i_y}{N}) \times (\frac{i_z-1}{N},\frac{i_z}{N})$ 
with $i_x,i_y,i_z \in \{1,\ldots,N\}$. 
Notice that if $C$ is a checkerboard copula with resolution $N$ then its density $c$ fulfills $c(\mathbf{u},t) \leq N^2$ for
$\lambda^3$-almost all $(\mathbf{u},t) \in \I^3$.  
Given a checkerboard copula $C$ with resolution $N$ w.l.o.g. we may assume that the mapping 
$t \mapsto C_{12;3}^t$ is constant on each interval of the form $[\frac{i-1}{N},\frac{i}{N})$, $i \in \{1,\ldots,N\}$, 
and define the quantity $\Delta=\Delta(C)$ by
$$
  \Delta(C)
	:= \max_{i,j \in \{1,\ldots,N\}} \int_{\I^2} \vert C^{t_i}_{12;3}(\uuu) - C^{t_j}_{12;3}(\uuu) \vert
 \; \mathrm{d} \lambda^2(\uuu)
$$
whereby $t_i=\frac{i-1}{N}$ for every $i \in \{1,\ldots,N\}$.
\begin{lemma}\label{lem:Delta(C)}
Suppose that $C_1,C_2,C_3, \ldots \in \CC_{\rm S}^3$ are simplified copulas and that $C$ is a non-simplified checkerboard copula with resolution $N \geq 2$.
Then the quantity $J (C_n,C)$ from Lemma \ref{lem_Jn} fulfills 
\begin{equation}
J (C_n,C) \geq \Delta(C)/N > 0
\end{equation} 
for every $n \in \mathbb{N}$. 
As a direct consequence, there is no sequence $(C_n)_{n \in \mathbb{N}}$ in $\CC_{\rm S}^3$ that converges to 
$C$ w.r.t. $D_\infty$ ($D_1$) or weakly conditional. 
\end{lemma}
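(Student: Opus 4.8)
The plan is to exploit the rigidity of both objects simultaneously. Since each $C_n$ lies in $\CC_{\rm S}^3$, its conditional copula $(C_n)_{12;3}^t$ equals one fixed bivariate copula $A_n$ for $\lambda$-almost every $t$, whereas the checkerboard structure of $C$ of resolution $N$ forces $t\mapsto C_{12;3}^t$ to be piecewise constant, equal to $C_{12;3}^{t_i}$ on each interval $I_i=[\tfrac{i-1}{N},\tfrac{i}{N})$. Feeding both facts into the definition of $J$ and applying Fubini, I would first rewrite
\[
  J(C_n,C)
  = \int_{\I}\!\int_{\I^2}\bigl| A_n(\sss)-C_{12;3}^t(\sss)\bigr|\,\mathrm{d}\lambda^2(\sss)\,\mathrm{d}\lambda(t)
  = \frac1N\sum_{i=1}^N g_i ,
\]
where $g_i:=\int_{\I^2}\bigl|A_n(\sss)-C_{12;3}^{t_i}(\sss)\bigr|\,\mathrm{d}\lambda^2(\sss)$.

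Next I would extract the lower bound by a one-line triangle-inequality argument. Setting $\delta_{ij}:=\int_{\I^2}\bigl|C_{12;3}^{t_i}(\sss)-C_{12;3}^{t_j}(\sss)\bigr|\,\mathrm{d}\lambda^2(\sss)$, so that $\Delta(C)=\max_{i,j}\delta_{ij}$, the triangle inequality in $L^1(\I^2)$ gives $\delta_{ij}\le g_i+g_j$ for all $i,j$. Choosing a maximizing pair $(i^\ast,j^\ast)$ with $\delta_{i^\ast j^\ast}=\Delta(C)$ and using $g_i\ge 0$, I obtain $\sum_{i=1}^N g_i\ge g_{i^\ast}+g_{j^\ast}\ge\Delta(C)$, whence $J(C_n,C)\ge\Delta(C)/N$. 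It remains to verify $\Delta(C)>0$: if $\Delta(C)=0$ then $C_{12;3}^{t_i}=C_{12;3}^{t_j}$ $\lambda^2$-a.e.\ for all $i,j$, and since $t\mapsto C_{12;3}^t$ is constant on each $I_i$ this would render $C$ simplified, contradicting the hypothesis. (This positivity also guarantees $i^\ast\neq j^\ast$, which is exactly what the two-term lower bound requires.)

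For the stated consequence I would use that a checkerboard copula of resolution $N$ is absolutely continuous with density bounded by $N^2$, so Lemma \ref{lem_Jn} applies with $a=N^2$; by symmetry of $J$ in its arguments this yields $J(C_n,C)=J(C,C_n)\le N^2(2+N^2)\,D_\infty(C,C_n)$. If some sequence $(C_n)_{n\in\N}$ in $\CC_{\rm S}^3$ converged to $C$ with respect to $D_\infty$, the right-hand side would tend to $0$, contradicting the strictly positive uniform lower bound $\Delta(C)/N$. Since $D_1$ and $D_\infty$ induce the same topology, and since weak conditional convergence implies $D_1$-convergence (Lemma~5 in \cite{trutschnig2015}), the same contradiction rules out convergence in $D_1$ or weakly conditional. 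I expect the only genuinely delicate points to be the justification that $\Delta(C)>0$ (equivalently that the maximizing pair is off-diagonal) and the correct invocation of the bounded-density estimate of Lemma \ref{lem_Jn}, which is what transfers the abstract convergence hypotheses onto the concrete quantity $J$; the combinatorial lower bound itself is immediate once the piecewise-constant/simplified decomposition is in place.
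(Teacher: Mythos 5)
Your proposal is correct and follows essentially the same route as the paper: decompose $J(C_n,C)$ into the $N$ blocks where $t\mapsto C_{12;3}^t$ is constant, use the $L^1$ triangle inequality through the single conditional copula $A_n$ of the simplified $C_n$ to bound the sum below by $\Delta(C)$, and then combine the resulting uniform lower bound $\Delta(C)/N>0$ with Lemma \ref{lem_Jn} (applied to the bounded-density checkerboard $C$, using symmetry of $J$) and the implications among $D_\infty$, $D_1$ and weak conditional convergence. Your explicit justification that $\Delta(C)>0$ and that the maximizing pair is off-diagonal merely spells out details the paper leaves implicit.
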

\begin{proof}
Under the assumptions of the lemma we obviously have
\begin{eqnarray*}
  J (C_n,C)
	&   =  & \sum_{i=1}^N \frac{1}{N} \int_{\I^2} \vert C^{t_i}_{12;3}(\uuu) - (C_n)_{12;3}(\uuu) \vert 
					 \; \mathrm{d} \lambda^2(\uuu) 
  \\
  & \geq & \frac{1}{N} \, \max_{i,j \in \{1,\ldots,N\}} \int_{\I^2} \vert C^{t_i}_{12;3}(\uuu) - C^{t_j}_{12;3}(\uuu) \vert
					 \; \mathrm{d} \lambda^2 (\uuu) 
	\\
	&   =  & \frac{\Delta(C)}{N} > 0.
\end{eqnarray*}
The second assertion now follows from Lemma \ref{lem_Jn} and the fact that $\Delta(C)$ only depends on $C$ and not on $n$, 
the assertion concerning weak conditional convergence from the fact that weak conditional convergence implies 
convergence w.r.t. $D_1$.
\end{proof}

\begin{lemma}\label{lem:dense} \leavevmode
\begin{enumerate}
\item 
The family of all non-simplified checkerboards is dense in $(\CC^3,D_\infty)$, in $(\CC^3,D_1)$, and 
dense in $\CC^3$ endowed with the topology induced by weak conditional convergence. 
\item 
The family of all non-simplified checkerboards with positive density 
is dense in the family of all absolutely continuous copulas with positive density w.r.t. $D_\infty$, wr.t. $D_1$, 
and w.r.t. the topology induced by weak conditional convergence. 
\end{enumerate}
\end{lemma}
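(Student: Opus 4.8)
The plan is to prove the stronger assertion that non-simplified checkerboard copulas are dense with respect to weak conditional convergence; since $C_n \xrightarrow{\text{wcc}} C$ implies $D_1$- and hence $D_\infty$-convergence, wcc-density automatically yields density in all three topologies at once, and it suffices (working with sequences) to exhibit, for each target, a sequence of non-simplified checkerboards converging in wcc. I would build such a sequence by perturbing the standard checkerboard approximants. For part (1), let $C_N$ denote the cell-average of $C$ on the grid of $N^3$ cubes; one checks, e.g.\ via the Lebesgue differentiation theorem applied to the kernel-valued map $t\mapsto K_C(t,\cdot)$, that $C_N \xrightarrow{\text{wcc}} C$. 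Mixing with independence, $C_{N,\eta}:=(1-\eta)C_N+\eta\,\Pi$ is a checkerboard of strictly positive density with $C_{N,\eta}\to C_N$ in every topology as $\eta\downarrow 0$, so I may assume the object to be perturbed has a cellwise-constant density bounded below by some $m>0$. For part (2) this mixing is unnecessary, since the cell-averages of a positive-density absolutely continuous copula are themselves positive-density checkerboards converging to it (the cell integral of a positive function over a positive-measure set is positive).

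The heart of the proof is a margin-preserving, $z$-dependent perturbation that makes a positive-density checkerboard non-simplified while moving it only slightly. I would fix one checkerboard ``direction'' $D:=\mu_{A^1}-\mu_{A^2}$, where $A^1,A^2$ are the two bivariate checkerboard copulas of Example~\ref{SVC.PInd} (Figure~\ref{fig.cube.}). Both are copulas, so $D$ is doubly centered: its density integrates to zero in each variable separately, whence adding $\pm\varepsilon D$ inside the $(u_1,u_2)$-fibres of fixed $z$-layers leaves the univariate margins uniform and preserves the bivariate margins $C_{13},C_{23}$ as well as the total mass of each layer. Given a positive-density checkerboard $B$ of resolution $N$, I refine its representation to resolution $2N$ and, on the refined $z$-layers, replace the conditional copula by alternately adding $+\varepsilon D$ and $-\varepsilon D$. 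Since $B$'s density is bounded below by $m>0$ and $D$ has bounded density, for all small $\varepsilon>0$ the result $\tilde B_\varepsilon$ has nonnegative density, hence is again a checkerboard copula (of resolution $2N$) with uniform margins. Because neighbouring refined layers now carry the distinct conditional copulas $(\cdot)+\varepsilon D$ and $(\cdot)-\varepsilon D$ with $D\neq 0$, the map $t\mapsto(\tilde B_\varepsilon)^t_{12;3}$ is not $\lambda$-a.e.\ constant, so $\tilde B_\varepsilon$ is non-simplified (and, for part (2), still has strictly positive density).

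Finally I would control the wcc-distance: the perturbation leaves $F_{1|3}(\cdot|t),F_{2|3}(\cdot|t)$ unchanged and alters each conditional copula by at most $\varepsilon\,d_\infty(A^1,A^2)$ uniformly in $t$, so $|K_{\tilde B_\varepsilon}(t,[\nul,\uuu])-K_B(t,[\nul,\uuu])|\le \varepsilon\,d_\infty(A^1,A^2)$ for all $\uuu\in\I^2$ and all $t\in\I$; letting $\varepsilon\downarrow 0$ gives $K_{\tilde B_\varepsilon}(t,\cdot)\to K_B(t,\cdot)$ weakly for every $t$, i.e.\ $\tilde B_\varepsilon \xrightarrow{\text{wcc}} B$. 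Chaining the approximations (target $\to$ checkerboard $\to$ positive-density checkerboard $\to$ non-simplified checkerboard) and extracting a diagonal sequence then shows that every $C\in\CC^3$ (resp.\ every positive-density absolutely continuous copula) is a wcc-limit of non-simplified (positive-density) checkerboards, which proves both parts. I expect the main obstacle to be the simultaneous bookkeeping in the perturbation step: ensuring that $\tilde B_\varepsilon$ is genuinely a copula --- nonnegativity of the density together with preservation of both uniform margins --- while provably producing two layers with distinct conditional copulas and a uniformly small change in the Markov kernel. The positivity reduction via mixing with $\Pi$ and the doubly-centered choice of $D$ are exactly what make this bookkeeping succeed.
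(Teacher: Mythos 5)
Your argument is correct, but it takes a genuinely different route from the paper's. The paper's proof is a short convex-combination trick: it takes checkerboard approximants $B_n$ of $C$ (dense in $(\CC^3,D_1)$ by \cite{trutschnig2015}), picks for each $n$ \emph{some} non-simplified checkerboard $E_n$ with the same resolution and the same $(1,3)$- and $(2,3)$-margins as $B_n$, and sets $C_n=(1-\tfrac1n)B_n+\tfrac1n E_n$; the bound $D_1(C_n,C)\le(1-\tfrac1n)D_1(C,B_n)+\tfrac1n D_1(C,E_n)$ finishes the metric cases, the wcc case is delegated to a three-dimensional extension of \cite[Theorem 3.2]{kasper2020}, and part (2) is obtained by further mixing with $\Pi$. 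You instead prove wcc-density directly and deduce $D_1$- and $D_\infty$-density from the implication chain, replacing the abstract choice of $E_n$ by an explicit margin-preserving perturbation (alternating $\pm\varepsilon(\mu_{A^1}-\mu_{A^2})$ on refined $z$-layers of a positive-density checkerboard). Your construction buys verifiability: non-simplifiedness is checked by inspection, since two adjacent refined layers share their conditional marginals but carry conditional distributions differing by $2\varepsilon(\mu_{A^1}-\mu_{A^2})\neq 0$, whereas the paper leaves implicit both the existence of a non-simplified $E_n$ with prescribed resolution and margins and the fact that the convex combination remains non-simplified; moreover your uniform kernel estimate $|K_{\tilde B_\varepsilon}(t,[\nul,\uuu])-K_B(t,[\nul,\uuu])|\le\varepsilon\, d_\infty(A^1,A^2)$ makes the diagonal extraction across the three approximation stages unproblematic even though wcc is only a sequential notion of convergence. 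What the paper's route buys is brevity and the avoidance of your one asserted-but-unproved step, namely that the cell-average checkerboards satisfy $C_N \xrightarrow{\text{wcc}} C$. That claim is true (a.e.\ convergence of the layer averages of $t\mapsto K_C(t,[\nul,\uuu])$ for $\uuu$ in a countable dense set via martingale convergence or Lebesgue differentiation, a monotonicity sandwich to upgrade to weak convergence, and an $O(1/N)$ control of the within-layer discretization in any metric for weak convergence), but it deserves the same care you devote to the perturbation; it is precisely the point at which the paper instead cites \cite{kasper2020}.
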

\begin{proof}
To prove the first assertion let $C \in \CC^3$ be arbitrary but fixed. 
Since according to \cite{trutschnig2015} checkerboard copulas are dense in $(\CC^3,D_1)$ 
we can find a sequence $(B_n)_{n \in \mathbb{N}}$ of checkerboard copulas with 
$\lim_{n \rightarrow \infty} D_1(B_n,C)=0$.
For every $n \in \mathbb{N}$ 
let $E_n$ be a non-simplified checkerboard copula with the same resolution  
and the same $(1,3)$- and $(2,3)$-marginals as $B_n$. 
Setting $C_n:=(1-\frac{1}{n}) B_n + \frac{1}{n} E_n$ for every $n \in \mathbb{N}$ 
yields a sequence $(C_n)_{n \in \mathbb{N}}$ of non-simplified checkerboard copulas. Considering 
$$
  D_1(C_n,C) 
	\leq (1-\tfrac{1}{n}) D_1(C,B_n) + \tfrac{1}{n} D_1(C,E_n)
$$   
it follows that $\lim_{n \rightarrow \infty} D_1(C_n,C)=0$, which completes the proof of the first assertion concerning
$D_1$ and $D_\infty$. The assertion concerning weak conditional convergence can be shown analogously: in fact, it is straightforward to extend the bivariate proof in \cite[Theorem 3.2]{kasper2020} to the three-dimensional setting, hence reusing
the convex combination idea and considering $C_n:=(1-\frac{1}{n}) B_n + \frac{1}{n} E_n$ yields the desired result. 
\\
To prove the second assertion suppose that $C \in \CC^3_{\rm ac}$ has positive density. 
According to the first assertion we can find a sequence $(B_n)_{n \in \mathbb{N}}$ of non-simplified checkerboard 
copulas with $\lim_{n \rightarrow \infty} D_1(B_n,C)=0$.
Setting $C_n:=(1-\frac{1}{n}) B_n + \frac{1}{n} \Pi$ for every $n \in \mathbb{N}$ 
yields a sequence $(C_n)_{n \in \mathbb{N}}$ of non-simplified checkerboard copulas with positive density. 
Considering 
$$
  D_1(C_n,C) 
	\leq (1-\tfrac{1}{n}) D_1(C,B_n) + \tfrac{1}{n} D_1(C,\Pi)
$$   
we get $\lim_{n \rightarrow \infty} D_1(C_n,C)=0$. Since the assertion for weak conditional convergence can be shown analogously, the proof is complete. 
\end{proof}


\section{Proofs}

{\bf Proof of Lemma} \ref{Cont.D1.Lemma1}: 
  For every $u \in \I$ we have
\begin{eqnarray*}
  \lefteqn{\int\limits_{\I}  
				   \big| K_{(C_n)_{13}} \big( v, [0,u] \big) - K_{C_{13}} \big( v, [0,u] \big) \big| 
					 \; \mathrm{d} \lambda(v)}
	\\
	&   =  & \int\limits_{\I} 
					 \big| K_{C_n} \big( v, [0,u] \times \I \big) - K_{C} \big( v, [0,u] \times \I \big) \big| 
					 \; \mathrm{d} \lambda(v)
	\\
	& \leq & \sup_{\uuu \in \I^2} \; \int\limits_{\I} 
					 \big| K_{C_n} \big( v, [{\bf 0},\uuu] \big) - K_{C} \big( v, [{\bf 0},\uuu] \big) \big| 
					\; \mathrm{d} \lambda(v)
\end{eqnarray*}
and hence 
$ D_\infty \big( (C_n)_{13}, C_{13}) \leq D_\infty (C_n,C) $.
  Since $D_1$-convergence is equivalent to $D_\infty$-convergence (see \cite{trutschnig2015}) this proves (1).
  We now prove the second assertion. Using Lipschitz continuity of copulas we obtain
\begin{eqnarray*}
  \lefteqn{D_1 \big( \psi (C_n), \psi(C) \big)}
	\\
	&   =  & \int\limits_{\I^2} \int\limits_{\I} 
					 \big| (C_n)_{p} \big( (F_n)_{1|3}(u_1|v), (F_n)_{2|3}(u_2|v) \big) 
								 - C_{p} \big( F_{1|3}(u_1|v), F_{2|3}(u_2|v) \big) \big| 
					 \; \mathrm{d} \lambda(v) \mathrm{d} \lambda^2(\uuu)
  \\
	& \leq & \int\limits_{\I^2} \int\limits_{\I} 
					 \big| (C_n)_{p} \big( (F_n)_{1|3}(u_1|v), (F_n)_{2|3}(u_2|v) \big) 
								 - (C_n)_{p} \big( F_{1|3}(u_1|v), F_{2|3}(u_2|v) \big) \big| 
					 \; \mathrm{d} \lambda(v) \mathrm{d} \lambda^2(\uuu)
	\\
	&      &  + \int\limits_{\I^2} \int\limits_{\I} 
					 \big| (C_n)_{p} \big( F_{1|3}(u_1|v), F_{2|3}(u_2|v) \big) 
								 - C_{p} \big( F_{1|3}(u_1|v), F_{2|3}(u_2|v) \big) \big| 
					 \; \mathrm{d} \lambda(v) \mathrm{d} \lambda^2(\uuu)
	\\
	& \leq & \int\limits_{\I^2} \int\limits_{\I}  
					 \big| (F_n)_{1|3}(u_1|v) - F_{1|3}(u_1|v) \big| 
					 + \big| (F_n)_{2|3}(u_2|v) - F_{2|3}(u_2|v) \big| 
					 \; \mathrm{d} \lambda(v) \mathrm{d} \lambda^2(\uuu)
	\\
	&      &  + \int\limits_{\I^2} \int\limits_{\I} 
					 \big| (C_n)_{p} \big( F_{1|3}(u_1|v), F_{2|3}(u_2|v) \big) 
								 - C_{p} \big( F_{1|3}(u_1|v), F_{2|3}(u_2|v) \big) \big| 
					 \; \mathrm{d} \lambda(v) \mathrm{d} \lambda^2(\uuu)
	\\
	& \leq & D_1 \big( (C_n)_{13}, C_{13} \big) + D_1 \big( (C_n)_{23}, C_{23} \big)
           + d_{\infty} \big( (C_n)_{p}, C_{p} \big)
\end{eqnarray*}
from which the assertion follows.\hfill$\Box$
\\
\\ 
{\bf Proof of Lemma} \ref{Cont.D1.Lemma2}: 
  We first have
\begin{eqnarray*}
  d_\infty \big( (C_n)_p, C_p \big) 
	&   =  & \sup_{\sss \in \I^2} 
					 \left| \int\limits_{\I} (C_n)_{12;3}^t (\sss) \;\mathrm{d}\lambda(t)
									- \int\limits_{\I} C_{12;3}^t (\sss) \;\mathrm{d}\lambda(t) \right|
	\\
	& \leq & \sup_{\sss \in \I^2} \int\limits_{\I} 
					 \big| (C_n)_{12;3}^t (\sss) - C_{12;3}^t (\sss) \big| \; \mathrm{d} \lambda(t) 
	\\
	&   =  & D_\infty \big( B_n, B \big)
\end{eqnarray*}
where the copulas $B_n,B$ are given by
$$
  B_n (\uuu,v)
	:= \int\limits_{[0,v]} (C_n)_{12;3}^t (\uuu) \; \mathrm{d} \lambda(t) 
	\qquad \textrm{ and } \qquad
  B (\uuu,v)
	:= \int\limits_{[0,v]} C_{12;3}^t (\uuu) \; \mathrm{d} \lambda(t).	
$$
Since $D_\infty-$convergence is equivalent to $D_1-$convergence it suffices to prove 
that $(B_n)_{n \in \mathbb{N}}$ converges to $B$ w.r.t. $D_1$, which can be done as follows:
Applying Lemma \ref{App.3} and H{\"o}lder's inequality yields
\begin{eqnarray*}
  \lefteqn{D_1 \big( B_n, B \big)}
	\\
	&   =  & \int\limits_{\I^2} \int\limits_{\I} 
				 \big| (C_n)_{12;3}^t (\sss) - C_{12;3}^t (\sss) \big|
				 \; \mathrm{d} \lambda(t) \mathrm{d} \lambda^2 (\sss)
	\\
	& \leq & \int\limits_{\I} \int\limits_{\I^2} 
				\Big| K_{C} \big( t, [{\bf 0},\sss] \big) - K_{C_n} \big( t, [{\bf 0},\sss] \big) \Big| \;
				\Big( c_{13} (s_1,t) \, c_{23} (s_2,t) \Big) 
				\; \mathrm{d} \lambda^2 (\sss) \mathrm{d} \lambda(t) 
	\\
	&     & + \int\limits_{\I} \int\limits_{\I} 
				\big| F_{1|3} (s_1|t) - (F_n)_{1|3} (s_1|t) \big| \, c_{13} (s_1,t)
				\; \mathrm{d} \lambda(s_1) \mathrm{d} \lambda (t)
	\\*
	&      & + \int\limits_{\I} \int\limits_{\I} 
				\big| F_{2|3} (s_2|t) - (F_n)_{2|3} (s_2|t) \big| \, c_{23} (s_2,t)
				\; \mathrm{d} \lambda(s_2) \mathrm{d} \lambda (t)
	\\
	& \leq & \left( \;\, \int\limits_{\I^2 \times \I} 
				\Big| K_{C} \big( t, [{\bf 0},\sss] \big) - K_{C_n} \big( t, [{\bf 0},\sss] \big) \Big|^p 
				\; \mathrm{d} \lambda^3 (\sss,t)  \right)^{\frac{1}{p}}
	\\
	&      & \qquad \cdot \left( \;\, \int\limits_{\I^2 \times \I} 
				\Big( c_{13} (s_1,t) \, c_{23} (s_2,t) \Big)^{\frac{p}{p-1}} 
				\; \mathrm{d} \lambda^3 (\sss,t) \right)^{\frac{p-1}{p}}
	\\
	&      & + \left( \;\, \int\limits_{\I \times \I} 
				\big| F_{1|3} (s_1|t) - (F_n)_{1|3} (s_1|t) \big|^q 
				\; \mathrm{d} \lambda^2(s_1,t) \right)^{\frac{1}{q}}
				\cdot \left( \;\, \int\limits_{\I \times \I} 
				\big| c_{13} (s_1,t) \big|^{\frac{q}{q-1}}
				\; \mathrm{d} \lambda^2(s_1,t) \right)^{\frac{q-1}{q}}
	\\
	&      & + \left( \;\, \int\limits_{\I \times \I} 
				\big| F_{2|3} (s_2|t) - (F_n)_{2|3} (s_2|t) \big|^r 
				\; \mathrm{d} \lambda^2(s_2,t) \right)^{\frac{1}{r}}
	      \cdot \left( \;\, \int\limits_{\I \times \I} 
				\big| c_{23} (s_2,t) \big|^{\frac{r}{r-1}}
				\; \mathrm{d} \lambda^2(s_2,t) \right)^{\frac{r-1}{r}}
\end{eqnarray*}
for all $p,q,r \in (1,\infty)$.
The latter expressions are finite by assumption, and the former part is bounded by
\begin{eqnarray*}
  \lefteqn{\left( \;\, \int\limits_{\I^2 \times \I}
				\Big| K_{C} \big( t, [{\bf 0},\sss] \big) - K_{C_n} \big( t, [{\bf 0},\sss] \big) \Big|^p 
				\; \mathrm{d} \lambda^3 (\sss,t)  \right)^{\frac{1}{p}}}
	\\
	& \leq & \left( \;\, \int\limits_{\I^2 \times \I}
				\Big| K_{C} \big( t, [{\bf 0},\sss] \big) - K_{C_n} \big( t, [{\bf 0},\sss] \big) \Big|^1 
				\; \mathrm{d} \lambda^3 (\sss,t) \right)^{\frac{1}{p}} 	=  D_1 (C_n, C)^{\frac{1}{p}}
\end{eqnarray*}
by $D_1 ((C_n)_{13}, C_{13})^{1/q}$ and $D_1 ((C_n)_{23}, C_{23})^{1/r}$, respectively.
Thus we conclude that $\lim_{n \to \infty} D_1 \big( B_n, B \big) = 0$ 
and hence $ \lim_{n \to \infty} d_\infty \big( (C_n)_p, C_p \big) = 0 $.
This proves the assertion.\hfill$\Box$

\section*{Acknowledgement}
The second and the third author gratefully acknowledge the support of the WISS 2025 project 
'IDA-lab Salzburg' (20204-WISS/225/197-2019 and 20102-F1901166-KZP).



\bigskip
\vfill\hspace*{\fill}\today
\end{document}